
\documentclass{amsart}

\newtheorem{thm}{Theorem}[section]
\newtheorem{prop}[thm]{Proposition}
\newtheorem{cor}[thm]{Corollary}
\newtheorem{lemma}[thm]{Lemma}

\theoremstyle{definition}
\newtheorem{dfn}[thm]{Definition}

\theoremstyle{remark}
\newtheorem{rem}[thm]{Remark}

\numberwithin{equation}{section}

\newcommand{\claim}{\textsf{Claim}\, }

\newcommand{\grad}{\ensuremath{\mathrm{grad}\ }}

\newcommand{\tub}{\ensuremath{\mathrm{Tub} }}
\newcommand{\F}{\ensuremath{\mathcal{F}}}

\newcommand{\singularF}{\ensuremath{\mathcal{X}_{\mathcal{F}}}}

\newcommand{\singularFsigma}{\ensuremath{\mathcal{X}_{\mathcal{F}_{\Sigma}}}}



\newcommand{\metric}{\ensuremath{ g }}

\begin{document}

\title[Desingularization of s.r.f]{Desingularization of singular Riemannian foliation}

\author{Marcos M. Alexandrino}


\address{Marcos M. Alexandrino\\Instituto de Matem\'{a}tica e Estat\'{\i}stica\\
Universidade de S\~{a}o Paulo, Rua do Mat\~{a}o 1010,05508 090 S\~{a}o Paulo, Brazil}
\email{marcosmalex@yahoo.de}
\email{malex@ime.usp.br}

\thanks{The  author was  supported by CNPq-Brazil}

\subjclass[2000]{Primary 53C12, Secondary 57R30}

\date{2009}

\keywords{Riemannian foliation,  isometric action, Gromov-Hausdorff limit, desingularization, blow-up}

\begin{abstract}

Let $\F$ be a singular Riemannian foliation on a compact Riemannian manifold $M$. By   successive blow-ups along the strata of $\F$ we construct a regular Riemannian foliation $\hat{\F}$ on a compact Riemannian manifold $\hat{M}$ and a desingularization map $\hat{\rho}:\hat{M}\rightarrow M$ that projects leaves of $\hat{\F}$ into leaves of $\F$. This result generalizes a previous result due to Molino for the particular case of a singular Riemannian foliation whose leaves   were the closure of leaves of a regular Riemannian foliation.
We also prove that, if the leaves of $\F$ are compact, then, for each small $\epsilon>0,$ we can find $\hat{M}$ and $\hat{\F}$ so that the desingularization map induces  an $\epsilon$-isometry between $M/\F$ and $\hat{M}/\hat{\F}$. This implies in particular that the space of leaves $M/\F$ is a Gromov-Hausdorff limit of a sequence of Riemannian orbifolds $\{(\hat{M}_{n}/\hat{\F}_{n})\}.$


\end{abstract}
\maketitle

\section{Introduction}

In this section, we  recall some definitions and state our main results as Theorem \ref{thm-Blowup-srf}, Theorem \ref{cor-epsilon-isometria}  and Corollary \ref{cor-gromov-hausdorff-convergence}.

We start by recalling the definition of a singular Riemannian foliation (see  the  book of   Molino \cite{Molino}).
\begin{dfn}[s.r.f]
 A partition $\F$ of a complete Riemannian manifold $M$ by connected immersed submanifolds (the \emph{leaves}) is called a {\it singular Riemannian foliation} (s.r.f for short) if it verifies condition (1) and (2):

\begin{enumerate}
\item $\F$ is a {\it singular foliation},
i.e., the module $\singularF$ of smooth vector fields on $M$ that are tangent at each point to the corresponding leaf acts transitively on each leaf. In other words, for each leaf $L$ and each $v\in TL$ with footpoint $p,$ there is $X\in \singularF$ with $X(p)=v$.
\item   Every geodesic that is perpendicular at one point to a leaf is \emph{horizontal}, i.e.,  is perpendicular to every leaf it meets.
\end{enumerate}
\end{dfn}

Typical examples of s.r.f are the partition by orbits of an isometric action, by leaf closures of a Riemannian foliation (see Molino \cite{Molino}), examples constructed by suspension of homomorphisms (see  \cite{Alex2,Alex4}), examples constructed by changes of metric and surgery (see Alexandrino and T\"{o}ben \cite{AlexToeben}), isoparametric foliations on space forms (some of them with inhomogeneous leaves as in Ferus, Karcher and M\"{u}nzner \cite{FerusKarcherMunzner}) and  partitions by parallel submanifolds of an equifocal submanifold (see Terng and Thorbergsson \cite{TTh1}).

Let $\F$ be a singular Riemannian foliation on a complete Riemannian manifold $M.$  A leaf $L$ of $\F$ (and each point in $L$) is called \emph{regular} if the dimension of $L$ is maximal, otherwise $L$ is called {\it singular}. The union of the leaves having the same dimension is an embedded submanifold called \emph{stratum} and in particular the \emph{minimal stratum}  is a closed submanifold (see Molino \cite{Molino}).

We are now able to state our main result.

\begin{thm}
\label{thm-Blowup-srf}
Let $\F$ be a singular Riemannian foliation of a compact Riemannian manifold $(M,g)$, $\Sigma$ the minimal stratum of $\F$ (with leaves of dimension $k_{0}$) and $\tub_{r}(\Sigma)$ the geometric tube over $\Sigma$ of radius $r$. Then, by blowing up $M$ along $\Sigma,$ we have a singular Riemannian foliation $\hat{\F}_{r}$ (with leaves of dimension greater then $k_{0}$) on a compact Riemannian manifold $(\hat{M}_{r}(\Sigma),\hat{g}_{r})$  and a  map $\hat{\pi}_{r}:\hat{M}_{r}(\Sigma)\rightarrow M$ with the following properties:
\begin{enumerate}
\item[(a)] $\hat{\pi}_{r}$ projects each leaf of $\hat{\F}_{r}$ into a leaf of $\F$. 
\item[(b)]Set $\hat{\Sigma}:=\hat{\pi}_{r}^{-1}(\Sigma)$. Then  $\hat{\pi}_{r}:(\hat{M}_{r}(\Sigma)-\hat{\Sigma},\hat{\F}_{r})\rightarrow (M-\Sigma,\F)$ is a foliated diffeomorphism and $\hat{\pi}_{r}:\hat{M}_{r}(\Sigma)-\tub_{r}(\hat{\Sigma})\rightarrow M-\tub_{r}(\Sigma)$ is an isometry
\item[(c)]If a unit speed geodesic $\hat{\gamma}$ is orthogonal to $\hat{\Sigma}$, then $\hat{\pi}_{r}(\hat{\gamma})$ is a unit speed geodesic orthogonal to $\Sigma$. 
\item[(d)]$\hat{\pi}_{r}|_{\hat{\Sigma}}: (\hat{\Sigma},\hat{g}_{r})\rightarrow(\Sigma,g)$ is a Riemannian submersion. In addition $(\hat{\Sigma},\hat{\F}_{r}|_{\hat{\Sigma}},\hat{g}_{r})$ is a s.r.f and the liftings of horizontal geodesics of $(\Sigma,\F|_{\Sigma},g)$ are horizontal geodesics of $(\hat{\Sigma},\hat{\F}_{r}|_{\hat{\Sigma}},\hat{g}_{r}).$ 

\end{enumerate}
 Furthermore, by successive blow-ups, we have a regular Riemannian foliation $\hat{\F}$ on a compact Riemannian manifold $\hat{M}$ and a desingularization map $\hat{\rho}:\hat{M}\rightarrow M$ that projects each leaf $\hat{L}$ of $\hat{\F}$ into a leaf $L$ of $\F.$  

 \end{thm}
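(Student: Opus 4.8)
The plan is to construct the desingularization $(\hat M_r(\Sigma),\hat{\F}_r,\hat\pi_r)$ for a single blow-up along the minimal stratum and to verify properties (a)--(d), and then to iterate. For the single blow-up, I would start from the tubular neighborhood $\tub_r(\Sigma)$ which, by Molino's structure theory and property (2) of an s.r.f., carries a natural description as a disk bundle over $\Sigma$ whose fibers are the "slices" transverse to $\Sigma$, with the foliation $\F$ restricting on each slice to an s.r.f. (the infinitesimal foliation) having $0$ as a leaf, hence invariant under the radial homothety. The blow-up $\hat M_r(\Sigma)$ is then obtained by replacing the normal disk bundle $\nu_r(\Sigma)$ by its oriented blow-up along the zero section, i.e.\ by $[0,r)\times S\nu(\Sigma)$ glued along $(0,r)\times S\nu(\Sigma)\cong \nu_r(\Sigma)\setminus\Sigma$ to $M\setminus\Sigma$. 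The map $\hat\pi_r$ is the identity outside the tube and the obvious "collapse the $[0,r)$-coordinate" map near $\hat\Sigma=\{0\}\times S\nu(\Sigma)$. The foliation $\hat{\F}_r$ is defined to be $\hat\pi_r^{-1}\F$ away from $\hat\Sigma$; near $\hat\Sigma$ one uses that the homogeneity of the infinitesimal foliation makes the leaves in $(0,r)\times S\nu(\Sigma)$ of the form (leaf in $S\nu(\Sigma)$) and lets these limit, as $t\to 0$, onto the corresponding leaves on $\hat\Sigma$, so that $\hat{\F}_r|_{\hat\Sigma}$ is the foliation of $S\nu(\Sigma)$ obtained from the infinitesimal foliation together with the base foliation $\F|_\Sigma$.

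The key point making all this work is the choice of metric $\hat g_r$. Outside $\tub_r(\hat\Sigma)$ we keep the original metric, giving the isometry statement in (b). Inside, writing points of the blown-up tube as $(t,u)$ with $t\in[0,r)$ and $u\in S\nu(\Sigma)$ over a point $x\in\Sigma$, I would take a metric of the form $\hat g_r = dt^2 + (\text{a metric on the }S\nu\text{-bundle depending on }t)$, interpolating between the cone/slice metric for $t$ near $r$ (so it glues smoothly with the exterior) and, for $t$ near $0$, a metric for which the $t$-lines are geodesics orthogonal to $\hat\Sigma$ and for which $\hat\pi_r|_{\hat\Sigma}$ is a Riemannian submersion onto $\Sigma$. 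Properties (c) and (d) are then read off from this normal form: $t$-curves are unit-speed geodesics perpendicular to $\hat\Sigma$, $\hat\pi_r$ sends them to the radial horizontal geodesics perpendicular to $\Sigma$, and the restriction to $\hat\Sigma$ is by construction a Riemannian submersion $(\hat\Sigma,\hat g_r)\to(\Sigma,g)$ whose associated horizontal distribution lifts horizontal geodesics of $(\Sigma,\F|_\Sigma,g)$. Checking that $\hat{\F}_r$ is genuinely an s.r.f.\ — condition (1) (a transitive module of smooth vector fields) and condition (2) (geodesics orthogonal to a leaf stay horizontal) — is the technical heart: (1) follows by lifting vector fields from $\singularF$ using the product structure and the homogeneity, while (2) is where the precise interpolation in $\hat g_r$ must be controlled, and this is the step I expect to be the main obstacle.

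For the iteration, I would argue that each blow-up strictly increases the dimension of the minimal leaf (stated as "leaves of dimension greater than $k_0$" in the theorem) while keeping $\hat M_r$ compact and $\hat{\F}_r$ an s.r.f.\ with only finitely many strata, so after finitely many steps the foliation has no singular stratum, i.e.\ is a regular Riemannian foliation $\hat{\F}$ on a compact manifold $\hat M$. Composing the intermediate projections gives $\hat\rho:\hat M\to M$, and property (a) applied at each stage, together with the fact that $\hat\pi_{r}$ maps $\hat{\F}_r$-leaves into $\F$-leaves and (by (d)) behaves well on the exceptional strata so that a later blow-up along a stratum of $\hat{\F}_r$ is compatible with the projection, shows $\hat\rho$ projects each leaf $\hat L$ of $\hat{\F}$ into a leaf $L$ of $\F$. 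The one subtlety in the induction is to make sure the blow-up construction can indeed be applied to the new minimal stratum of $\hat{\F}_r$ — i.e.\ that $(\hat M_r,\hat{\F}_r,\hat g_r)$ still satisfies the hypotheses needed (compactness, s.r.f., minimal stratum a closed submanifold) — which is exactly what properties (a)--(d) are designed to guarantee, so the induction closes.
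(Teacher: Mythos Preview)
Your overall strategy coincides with the paper's: blow up along the minimal stratum, endow the result with an adapted metric satisfying (a)--(d), and iterate. Where your proposal falls short is precisely at the step you flag as ``the main obstacle'': you give no concrete mechanism for producing a metric $\hat g_r$ for which $\hat\F_r$ is an s.r.f., and the vague prescription ``$dt^2 +$ (a metric on the $S\nu$-bundle depending on $t$), interpolating'' does not determine one. The paper supplies two ingredients you are missing. First, before rescaling, it replaces $g$ on $\tub_r(\Sigma)$ by an adapted metric $\tilde g$ admitting a smooth distribution $H$ (of rank $\codim L_q$ for $q\in\Sigma$) that contains the normal space of every plaque; this uses the flat-slice metric $g^0$ and equifocality. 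Second, with this $H$ in hand it decomposes $H = H^1\oplus H^2\oplus H^3$ (radial, angular-in-fiber, base-tangential) and defines $\hat g^M$ by rescaling only the $H^2$ component by $1/\|\xi\|^2$. This specific rescaling is what makes the pullback $\hat\pi^*\hat g^M$ extend smoothly across $\hat\Sigma$ and simultaneously turns $\hat\pi|_{\hat\Sigma}$ into a Riemannian submersion; your unspecified interpolation does not obviously achieve either.

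For the verification that $\hat\F_r$ is an s.r.f.\ (your ``technical heart''), the paper does not check the horizontal-geodesic condition directly. Instead it repeatedly invokes a reduction principle: if a singular foliation restricts to a Riemannian foliation on each stratum, then it is an s.r.f.\ globally. This lets one verify the transverse condition stratum by stratum via the Lie derivative $L_X g_T=0$, which is tractable because foliated vectors tangent to $H^i$ stay tangent to $H^i$ (again by equifocality) and $1/\|\xi\|^2$ is constant along plaques. Your proposed check via ``lifting vector fields from $\singularF$ using the product structure and homogeneity'' handles condition (1) but does not address condition (2), and you have no substitute for the stratum-by-stratum argument. Finally, a minor point: the paper uses the projective blow-up $\mathbb{P}(\nu\Sigma)$ rather than your oriented blow-up $[0,r)\times S\nu(\Sigma)$; this is not a real obstacle but does mean your $\hat\Sigma$ is a double cover of the paper's.
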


The above theorem generalizes a result due to Molino \cite{MolinoBlowup} who proved items (a) and (b)  under the additional conditions that  the leaves of $\F$ are the closure of leaves of a regular Riemannian foliation.

\begin{rem}
In \cite{Toeben}  T\"{o}ben   used the blow-up technique (on Grassmannian manifold) to study  equifocal submanifolds.
Lytchak \cite{Lytchak2}  generalized the  blow-up introduced by T\"{o}ben  and proved that a singular Riemannian foliations admits a resolution preserving the transverse geometry if and only if it is infinitesimally polar.
\end{rem}

\begin{rem}
\label{rem-Blowup-closed-embedded}

Let $\F$ be a singular Riemannian foliation of a complete Riemannian manifold $(M,g)$. Suppose that the leaves of $\F$ are closed embedded.
 Then the conclusion of Theorem \ref{thm-Blowup-srf} remains valid, if  the tubular neighborhood $\tub_{r}(\Sigma)$ is replaced by a  $\F$-invariant neighborhood $V$ of $\Sigma$ in $M$ and the tubular neighborhood $\tub_{r}(\hat{\Sigma})$ is replaced by the neighborhood $\hat{\pi}^{-1}_{r}(V)$ of $\hat{\Sigma}$ in $\hat{M}_{r}(\Sigma)$.
In particular, by successive blow-ups, we have a regular Riemannian foliation $\hat{\F}$ on a complete Riemannian manifold $\hat{M}$ and a desingularization map $\hat{\rho}:\hat{M}\rightarrow M$ that projects each leaf $\hat{L}$ of $\hat{\F}$ into a leaf $L$ of $\F.$  
This can be proved, following the proof of 
 Theorem \ref{thm-Blowup-srf} and replacing $\tub_{r}(\Sigma)$ by the neighborhood $V$ constructed in Proposition \ref{viz-F-invariante}.
\end{rem}

In the particular case of s.r.f with compacts leaves on a compact manifold, we conclude that,
for each small $\epsilon>0$, we can find $\hat{M}$ and $\hat{\F}$ so that the desingularization map induces  an $\epsilon$-isometry between $M/F$ and $\hat{M}/\hat{\F}$. In other words, we have the next result.

\begin{thm}
\label{cor-epsilon-isometria}
Let $\F$ be a singular Riemannian foliation on a compact Riemannian manifold $M$. Assume that the leaves of $\F$ are compact. Then for each small positive number $\epsilon$ we can find a regular Riemannian foliation $\hat{\F}$ with compact leaves on a compact Riemannian manifold $\hat{M}$ and a desingularization map $\hat{\rho}:\hat{M}\rightarrow M$  with the following property:
if $\hat{q},\hat{p}\in \hat{M}$ then
\[ |d(L_{q},L_{p})-\hat{d}(\hat{L}_{\hat{q}},\hat{L}_{\hat{p}})|<\epsilon\]
where $p=\hat{\rho}(\hat{p})$ and $q=\hat{\rho}(\hat{q})$. 

\end{thm}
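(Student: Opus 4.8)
The plan is to iterate Theorem~\ref{thm-Blowup-srf} a finite number of times and track how the metric distance on the leaf spaces changes at each blow-up, controlling the total error by $\epsilon$. Since $M$ is compact, there are only finitely many strata, so the successive blow-up construction of Theorem~\ref{thm-Blowup-srf} terminates after finitely many steps, say $N$ steps, producing $(\hat M,\hat\F)$ regular with compact leaves (compactness of leaves is preserved because the leaves of $\hat\F_{r}$ fiber over leaves of $\F$ with compact fibers, by item (a) and the structure of the blow-up over the tube). Allot an error budget $\epsilon_{i}>0$ at the $i$-th blow-up with $\sum_{i=1}^{N}\epsilon_{i}<\epsilon$; it then suffices to prove the one-step estimate: given $\delta>0$, the radius $r$ of the geometric tube $\tub_{r}(\Sigma)$ in Theorem~\ref{thm-Blowup-srf} can be chosen small enough that the map $\hat\pi_{r}$ induces a $\delta$-isometry between $M/\F$ and $\hat M_{r}(\Sigma)/\hat\F_{r}$, and then compose the estimates along the tower.

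For the one-step estimate, recall that $d(L_{q},L_{p})$ equals the length of a shortest horizontal geodesic from the leaf through $p$ to the leaf through $q$ (leaves are compact, so minimizers exist), and likewise downstairs. By item~(b) of Theorem~\ref{thm-Blowup-srf}, $\hat\pi_{r}$ is a foliated isometry from $\hat M_{r}(\Sigma)-\tub_{r}(\hat\Sigma)$ onto $M-\tub_{r}(\Sigma)$, so for leaves staying outside the tube the two distances agree exactly. The issue is a horizontal geodesic segment of $M$ that enters $\tub_{r}(\Sigma)$: I would show that replacing such a segment by one that travels along the boundary sphere bundle $\partial\tub_{r}(\Sigma)$ changes its length by $O(r)$ — this uses that the leaves of $\F$ are equidistant from $\Sigma$ inside the tube and that the restriction of the metric to the distance spheres is $C^{1}$-close to a product as $r\to0$, so detours cost at most a uniform constant times $r$. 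Consequently $\hat d(\hat L_{\hat p},\hat L_{\hat q})$ and $d(L_{p},L_{q})$ differ by at most $C r$ for a constant $C$ depending only on $(M,g,\F)$; choosing $r<\delta/C$ gives the $\delta$-isometry. One also needs that every point of $\hat M_{r}(\Sigma)/\hat\F_{r}$ is within $\delta$ (in fact within $Cr$) of the image of the ``exterior'' part, which holds because $\hat\pi_{r}^{-1}(\tub_{r}(\Sigma))$ deformation-retracts onto $\hat\Sigma$ through $\hat\F_{r}$-saturated sets and its $\hat d$-diameter in the leaf space is $O(r)$; this makes $\hat\pi_{r}$ an $\epsilon$-isometry in the metric-space sense (both a near-isometric embedding on distances and $\epsilon$-surjective on leaf spaces).

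The main obstacle is the estimate on horizontal geodesics crossing the tube: one must argue that a shortest horizontal path in $M$ between two far-apart leaves can be perturbed, at cost $O(r)$, into one that does not penetrate deeper than the radius-$r$ tube, and symmetrically lift shortest horizontal paths from $\hat M_{r}(\Sigma)$ down without length blow-up near $\hat\Sigma$ — here items~(c) and (d) of Theorem~\ref{thm-Blowup-srf}, which say that orthogonal geodesics and horizontal geodesics are preserved by $\hat\pi_{r}$ and its restriction to $\hat\Sigma$, are exactly what is needed to transport minimizers between the two sides. A secondary point to handle carefully is the accumulation of errors along the tower: at the $i$-th step the ambient manifold and foliation change, so the constant $C_{i}$ in the one-step estimate depends on the $i$-th intermediate space, but since each is compact and there are finitely many of them, one simply chooses the $i$-th tube radius after the $(i-1)$-th blow-up has been fixed, small enough that $C_{i}r_{i}<\epsilon_{i}$, and concludes by the triangle inequality that $\hat\rho=\hat\pi_{r_{1}}\circ\cdots\circ\hat\pi_{r_{N}}$ induces an $\epsilon$-isometry. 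Finally, Corollary~\ref{cor-gromov-hausdorff-convergence} follows by taking $\epsilon=1/n$ and noting that a regular Riemannian foliation with compact leaves has a leaf space that is a Riemannian orbifold.
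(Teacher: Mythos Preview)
Your overall architecture is right and matches the paper: reduce to a one-step estimate for a single blow-up along the minimal stratum, then iterate over the finitely many strata with a telescoping error budget. You also correctly flag items (c) and (d) of Theorem~\ref{thm-Blowup-srf} as the tools that let one transport minimizers across $\hat\pi_{r}$.

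The gap is in the one-step detour argument. Your claim that a segment of a minimal horizontal geodesic inside $\tub_{r}(\Sigma)$ can be rerouted along $\partial\tub_{r}(\Sigma)$ at cost $O(r)$, justified by the metric on the distance spheres being ``$C^{1}$-close to a product'', is not established and is not how the paper proceeds. A horizontal geodesic can run nearly parallel to $\Sigma$ for a length comparable to the diameter of $\Sigma$; rerouting it along the boundary sphere bundle requires producing an $\F$-horizontal curve there of comparable length, and neither the product approximation nor the equidistance of leaves gives that directly. Likewise the assertion that $\hat\pi_{r}^{-1}(\tub_{r}(\Sigma))$ has leaf-space $\hat d$-diameter $O(r)$ is false as stated (its image contains $\hat\Sigma/\hat\F_{r}$, whose diameter is comparable to that of $\Sigma/\F$); what is true, and what you actually need for $\epsilon$-surjectivity, is only that every leaf in the tube is within $r$ of $\partial\tub_{r}(\hat\Sigma)$ along a radial geodesic.

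The paper's one-step estimate (Proposition~\ref{prop-epsilon-isometricos}, via Lemmas~\ref{lemma-fundamental-thm-gromov-hausdorff-convergence-sigma} and~\ref{lemma-suplementar-thm-gromov-hausdorff-convergence-sigma}) replaces your boundary detour by a detour through $\Sigma$ (resp.\ $\hat\Sigma$) itself: a segment meeting the tube is exchanged for a concatenation of a radial geodesic to $\hat\Sigma$, a horizontal curve in $\hat\Sigma$, and a radial geodesic back, using items (c) and (d). Two further ingredients, absent from your sketch, make this work. First, a uniform diameter bound $\hat{M}_{r}(\Sigma)/\hat\F_{r}\le K$ independent of $r$ (from items (c),(d)) allows one to subdivide any minimizer into at most $N_{0}$ pieces of length at most $\epsilon_{0}/3$, where $\epsilon_{0}$ is an intrinsic radius of $(\Sigma,\F|_{\Sigma})$. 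Second, a local comparison (Claim~2 of Lemma~\ref{lemma-fundamental-thm-gromov-hausdorff-convergence-sigma}) shows that for nearby leaves $\hat L_{\hat p},\hat L_{\hat q}\subset\hat\Sigma$ one has $|d(L_{\hat\pi_{r}(\hat p)},L_{\hat\pi_{r}(\hat q)})-\hat d_{r}(\hat L_{\hat p},\hat L_{\hat q})|<\epsilon_{1}$, using Remark~\ref{rem-metrica-hatg-tildeg} and the convergence of the auxiliary distribution $D$ to $H$ as $r\to0$. The resulting error is $N_{0}(\epsilon_{1}+4r)$, not a bare $Cr$: one chooses $\epsilon_{1}$ first and then $r$, so your single constant $C$ depending only on $(M,g,\F)$ does not capture the actual dependence.
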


The above result implies directly the next corollary (see appendix). 


\begin{cor}
\label{cor-gromov-hausdorff-convergence}
Let $\F$ be a singular Riemannian foliation on a compact Riemannian manifold $M$. Assume that the leaves of $\F$ are compact. Then for each small positive number $\epsilon$ we can find a regular Riemannian foliation $\hat{\F}$ with compact leaves on a compact Riemannian manifold $\hat{M}$ so that
$d_{G-H}(M/\F, \hat{M}/\hat{\F})<\epsilon$, where $d_{G-H}$ is the distance of Gromov-Hausdorff.
\end{cor}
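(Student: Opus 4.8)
The plan is to deduce the statement directly from Theorem \ref{cor-epsilon-isometria} together with the standard characterization of the Gromov--Hausdorff distance in terms of $\epsilon$-isometries (correspondences with small distortion). Recall that if $X$ and $Y$ are compact metric spaces and there exists a map $f\colon X\to Y$ whose image is $\epsilon$-dense in $Y$ and which distorts distances by at most $\epsilon$, i.e.\ $|d_Y(f(x),f(x'))-d_X(x,x')|<\epsilon$ for all $x,x'\in X$, then $d_{G-H}(X,Y)<\epsilon$ (up to a harmless universal constant factor, which can be absorbed by shrinking $\epsilon$ from the start). So the two things I need are the metric-distortion bound, which is exactly the conclusion of Theorem \ref{cor-epsilon-isometria}, and the density of the image.

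First I would fix $\epsilon>0$ and apply Theorem \ref{cor-epsilon-isometria} to obtain $\hat M$, $\hat\F$ and the desingularization map $\hat\rho\colon\hat M\to M$ satisfying $|d(L_q,L_p)-\hat d(\hat L_{\hat q},\hat L_{\hat p})|<\epsilon$ for all $\hat p,\hat q\in\hat M$, where $p=\hat\rho(\hat p)$, $q=\hat\rho(\hat q)$. This $\hat\rho$ descends to a well-defined map $\bar\rho\colon \hat M/\hat\F\to M/\F$ on leaf spaces, since by Theorem \ref{thm-Blowup-srf}(a) (applied at each blow-up step) $\hat\rho$ sends each leaf of $\hat\F$ into a leaf of $\F$, so the image of a leaf of $\hat\F$ lies in a single leaf of $\F$. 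Equipping $M/\F$ with the quotient metric $d$ and $\hat M/\hat\F$ with $\hat d$, the displayed inequality says precisely that $\bar\rho$ distorts distances by less than $\epsilon$.

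Next I would check that $\bar\rho$ is surjective, hence its image is trivially $\epsilon$-dense: $\hat\rho\colon\hat M\to M$ is surjective because it is built from finitely many blow-down maps, each of which is surjective (indeed, by Theorem \ref{thm-Blowup-srf}(b) each $\hat\pi_r$ restricts to a diffeomorphism off the exceptional set and maps the exceptional set $\hat\Sigma$ onto $\Sigma$ by the submersion of (d)), and composition of surjections is a surjection. Therefore every leaf $L$ of $\F$ meets the image of $\hat\rho$, so $\bar\rho$ is onto. Combining surjectivity with the distortion bound, $\bar\rho$ is an $\epsilon$-isometry between the compact metric spaces $\hat M/\hat\F$ and $M/\F$, and the cited characterization of $d_{G-H}$ yields $d_{G-H}(M/\F,\hat M/\hat\F)<\epsilon$ after relabeling $\epsilon$ (e.g.\ running the argument with $\epsilon/2$ to kill the factor $2$ in the standard estimate $d_{G-H}\le \tfrac{1}{2}\cdot(\text{distortion})$ when the map is onto).

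The only genuinely non-formal points are the two facts that the leaf spaces are compact metric spaces and that the quotient pseudometrics are honest metrics; compactness is immediate since $M$ and $\hat M$ are compact and the quotient maps are continuous, and the Hausdorff (hence metric) property holds because the leaves of $\F$ and of $\hat\F$ are compact, so distinct leaves are separated by a positive distance. Everything else is a direct citation of Theorem \ref{cor-epsilon-isometria} and the elementary relation between $\epsilon$-isometries and Gromov--Hausdorff distance; I expect no real obstacle here, which is why the argument is deferred to the appendix in the paper.
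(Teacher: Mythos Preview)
Your proposal is correct and follows essentially the same route as the paper: the appendix proves that a surjective $\epsilon/2$-isometry between compact metric spaces forces $d_{G-H}<3\epsilon$ (via Petersen's lemma on matched $\epsilon$-dense subsets), and then applies this with the map on leaf spaces induced by $\hat\rho$ from Theorem~\ref{cor-epsilon-isometria}. You spell out a few points the paper leaves implicit (well-definedness of $\bar\rho$, surjectivity from Theorem~\ref{thm-Blowup-srf}(b)(d), compactness/Hausdorffness of the leaf spaces), but the argument is the same.
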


\begin{rem}
Recall that if $\hat{\F}$ is a Riemannian foliation  with compact leaves on a complete Riemannian manifold $\hat{M}$, then $\hat{M}/\hat{\F}$ is a Riemannian orbifold (see Molino \cite{Molino}).   
Therefore the above corollary implies that $M/\F$ is a Gromov-Hausdorff limit of a sequence of Riemannian orbifolds $\{(\hat{M}_{n}/\hat{\F}_{n})\}.$

\end{rem}

\begin{rem}
Theorem \ref{cor-epsilon-isometria} and Corollary \ref{cor-gromov-hausdorff-convergence} remain valid if we assume that $\F$ is a s.r.f on a complete Riemannian manifold such that the leaves of $\F$ are closed embedded and $M/\F$ is compact. This can be proved using Remark \ref{rem-Blowup-closed-embedded} and following the proof of Theorem \ref{cor-epsilon-isometria}. 


\end{rem}

This paper is organized as follows. In Section 2 we discuss some results from the theory of  s.r.f that are used in the proof of Theorem \ref{thm-Blowup-srf}. In Section 3  and 4 we prove Theorem \ref{thm-Blowup-srf} and Theorem \ref{cor-epsilon-isometria} respectively. 
Finally, in Section 5 (appendix) we recall some basic facts about Gromov-Hausdorff distance  that  imply  that Corollary \ref{cor-gromov-hausdorff-convergence} follows  from Theorem \ref{cor-epsilon-isometria}.


\section{Properties of a s.r.f.} 
\label{sec-prop}

In this section we  review some results and proofs of \cite{Molino} and \cite{AlexToeben2}
that will be needed to prove  Theorem \ref{thm-Blowup-srf}. 
We also present some new propositions.

We start by recalling  equivalent definitions of regular Riemannian foliations.

\begin{prop}[\cite{Molino}]
\label{prop-equivalencia-df-FR}
Let $\F$ be a  foliation on a complete Riemannian manifold $(M,\metric)$.
Then the following statements are equivalent
\begin{enumerate}
\item[(a)] $\F$ is a Riemannian foliation.
\item[(b)] For each $q\in M$  there exists a neighborhood $U$ of $q$ in $M$, a Riemannian manifold $(\sigma,b)$     and a Riemannian submersion $f: (U,\metric)\rightarrow (\sigma,b)$ such that the connected components of $\F \cap U$ (plaques) are pre images of $f.$
\item[(c)] Set $\metric_{T}:=A^{*}\metric$ where $A_{p}:T_{p}M\rightarrow \nu_{p}L$ is the orthogonal projection. Then the  Lie derivative $L_{X}\metric_{T}$ is zero for each $X\in \singularF ,$ where $\singularF $ is the module of smooth vector fields on $M$ that are tangent at each point to the corresponding leaf. In this case $\metric_{T}$ is called the transverse metric.
\end{enumerate}
\end{prop}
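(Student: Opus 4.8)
The plan is to prove the cycle $(a)\Rightarrow(b)\Rightarrow(c)\Rightarrow(a)$, working with the characterization of a Riemannian foliation used in \cite{Molino}: $\F$ is Riemannian with respect to $\metric$ exactly when the tensor $\metric_{T}$ that $\metric$ induces on the normal bundle $\nu\F$ is invariant under the linear holonomy of $\F$, equivalently, when $\metric$ is a bundle-like metric for $\F$.

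For $(a)\Rightarrow(b)$ I would fix $q\in M$ and a foliated chart around it, presented as a submersion $f\colon U\to\sigma$ of a connected neighborhood $U$ of $q$ onto a manifold $\sigma$ (the local leaf space) whose fibers are the plaques of $\F\cap U$. Since $\ker df_{p}=T_{p}L$ and $df_{p}$ restricts to a linear isomorphism of the $\metric$-orthogonal complement $\nu_{p}L$ onto $T_{f(p)}\sigma$, transporting $\metric_{T}$ by $df$ yields a field of inner products on $T\sigma$; holonomy invariance is exactly what makes this field independent of the chosen point in a fiber, hence a genuine Riemannian metric $b$ on $\sigma$, and by construction $f\colon(U,\metric)\to(\sigma,b)$ is then a Riemannian submersion whose fibers (the plaques) are the preimages of $f$.

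For $(b)\Rightarrow(c)$ I would observe that for such an $f$ the orthogonal projection $A_{p}$ onto $\nu_{p}L$ is precisely the projection onto the horizontal space $(\ker df_{p})^{\perp}$, on which $df_{p}$ is a linear isometry onto $T_{f(p)}\sigma$; hence $\metric_{T}|_{U}=f^{*}b$. If $X\in\singularF$ then $X$ is tangent to the leaves, so $df(X)=0$, and in coordinates on $U$ adapted to $f$ --- transverse coordinates $x^{i}$ pulled back from $\sigma$ and leaf coordinates $y^{j}$ --- the coefficients of $f^{*}b$ depend only on the $x^{i}$ while $dx^{i}(X)\equiv 0$; the formula for the Lie derivative of a symmetric $2$-tensor then gives $L_{X}(f^{*}b)=0$ on $U$. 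As such a chart exists around every point, $L_{X}\metric_{T}=0$ on all of $M$.

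For $(c)\Rightarrow(a)$ I would argue that, for $X\in\singularF$, the flow $\varphi_{t}$ of $X$ preserves the leaves and therefore induces a flow $\bar\varphi_{t}$ of bundle automorphisms of $\nu\F$; the identity $L_{X}\metric_{T}=0$ says each $\bar\varphi_{t}$ is an isometry for the fibrewise inner products of $\metric_{T}$, while $\bar\varphi_{t}$ is exactly the linear holonomy of $\F$ along the integral curves of $X$. Given any path $c$ in a leaf $L$ from $p$ to $p'$, the transitivity of the action of $\singularF$ on $L$ produces a concatenation of flow segments of fields in $\singularF$ joining $p$ to $p'$ and homotopic to $c$ rel endpoints in $L$; since linear holonomy depends only on that homotopy class, it equals the corresponding composition of maps $\bar\varphi_{t}$ and is thus a $\metric_{T}$-isometry $\nu_{p}L\to\nu_{p'}L$, so $\metric_{T}$ is holonomy-invariant and $\F$ is Riemannian. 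I expect this last implication to be the main obstacle: passing from the infinitesimal condition $L_{X}\metric_{T}=0$ to the finite statement that all linear holonomy maps are $\metric_{T}$-isometries requires knowing both that the flow of a leaf-tangent field realizes the linear holonomy along its integral curves and that an arbitrary holonomy map can be built from such flows --- the technical core of the Reinhart and Molino description of bundle-like metrics --- which I would either carry out carefully or simply quote from \cite{Molino}.
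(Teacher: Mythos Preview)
The paper does not give its own proof of this proposition: it is stated with the attribution \cite{Molino} and no \texttt{proof} environment follows it, so there is nothing to compare your argument against. Your cycle $(a)\Rightarrow(b)\Rightarrow(c)\Rightarrow(a)$ is the standard one and is correct as sketched; in particular your identification of the crux in $(c)\Rightarrow(a)$ --- that the flow of any $X\in\singularF$ realizes linear holonomy along its integral curves and that arbitrary holonomy maps decompose into such flows --- is exactly the point that needs care, and your plan to either spell it out or cite \cite{Molino} is appropriate. Since the paper itself simply quotes the result, citing \cite{Molino} would already match what the author does.
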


Throughout the rest of this section we assume that $\F$ is a singular Riemannian foliation (s.r.f) on a complete Riemannian manifold $M.$

The first interesting result about s.r.f. is the so called \emph{Homothetic Transformation Lemma} of Molino (see \cite[Lemma 6.2]{Molino}).


By conjugating the homothetic transformations of the normal bundle $\nu P$ of a plaque $P$  via the normal exponential map, one defines for small strictly positive real numbers $\lambda$, a homothetic transformation $h_{\lambda}$  with proportionality constant $\lambda$ with respect to the plaque $P.$

\begin{prop}[\cite{Molino}]
\label{homothetic-lemma}
The homothetic transformation $h_{\lambda}$ sends plaque to plaque and therefore respects the singular foliation $\F$ in the tubular neighborhood $\tub(P)$ where it is defined.
\end{prop}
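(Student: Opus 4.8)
Proof proposal for Proposition~\ref{homothetic-lemma} (the Homothetic Transformation Lemma).

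\textbf{Setup and strategy.} Let $P$ be a plaque of $\F$, contained in a leaf $L$, and let $\nu P$ be its normal bundle. For small $r$ the normal exponential map $\exp^\perp \colon \nu^r P \to \tub_r(P)$ is a diffeomorphism onto the geometric tube, and for $\lambda \in (0,1]$ the homothety $h_\lambda$ is defined as the conjugate by $\exp^\perp$ of the fiberwise scaling $v \mapsto \lambda v$ on $\nu P$. The plan is to show $h_\lambda$ maps plaques of $\F$ to plaques. The key analytic fact I would use is property~(2) of a s.r.f.: every geodesic perpendicular to a leaf at one point stays perpendicular to every leaf it meets; equivalently, the leaves meeting $\tub_r(P)$ intersect each normal geodesic $\gamma_v(t) = \exp^\perp(tv)$, $v \in \nu_x P$ unit, in a way compatible with these geodesics being the ``horizontal'' directions. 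So the foliation inside the tube is encoded by how it meets the unit normal sphere bundle.

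\textbf{Core argument.} I would argue as follows. Fix a point $q = \exp^\perp(v)$ with $v \in \nu_x P$, $|v| = \rho < r$, lying on a leaf $L'$; write $v = \rho u$ with $u$ a unit normal vector. I want to show $h_\lambda(L' \cap \tub_r(P))$ lies in a single leaf through $h_\lambda(q) = \exp^\perp(\lambda v)$. Because the radial geodesic $\gamma_u(t)$ is horizontal (perpendicular to $L$ at $x$, hence to every leaf it meets), it meets $L'$ orthogonally at $q$; the distance from $q$ to $P$ along this geodesic is $\rho$. Consider a curve $\alpha(s)$ in $L' \cap \tub_r(P)$ with $\alpha(0) = q$. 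For each $s$, let $u(s)$ be the unit normal vector and $\rho(s)$ the radius with $\alpha(s) = \exp^\perp(\rho(s) u(s))$. Using property~(2) together with the first variation of arclength (a geodesic hitting two leaves orthogonally realizes the distance between them locally), one shows $\rho(s)$ is constant, i.e. $L'$ is contained in the ``distance sphere'' $\{\,\dist(\cdot,P) = \rho\,\}$, and that the assignment $u(s) \mapsto$ (leaf through $\exp^\perp(\rho\, u(s))$) only depends on the geodesic directions, not on the particular radius $\rho$. In other words, sliding radially along normal geodesics — which is exactly what $h_\lambda$ does — carries the leaf $L'$ to a well-defined leaf $L''$ through $\exp^\perp(\lambda \rho\, u)$. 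A clean way to package this is: the local foliation on $\tub_r(P)$ is pulled back from a foliation on the unit normal bundle $\nu^1 P$ (times the radial parameter as a trivial factor compatible with the foliation), and $h_\lambda$ acts trivially on the $\nu^1 P$ factor while rescaling the radial parameter; hence it preserves the foliation.

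\textbf{Smoothness and conclusion.} Having shown $h_\lambda$ sends leaves into leaves set-theoretically, I would note that $h_\lambda$ is a diffeomorphism of $\tub_r(P)$ (for $\lambda$ small enough so that $\lambda \cdot \tub_r(P) \subseteq \tub_r(P)$ as needed), so it maps the plaque through $q$ diffeomorphically onto a connected immersed submanifold contained in a leaf of $\F$, and conversely $h_\lambda^{-1}$ does the same; dimension count then forces plaques to go to plaques. Since $h_\lambda$ respects the partition $\F$ on $\tub_r(P)$ this is the assertion. The delicate point — and the main obstacle — is the claim that $\rho(s)$ is constant, i.e. that a leaf meeting the tube lies in a single distance sphere around $P$; this is really the heart of Molino's lemma and follows from property~(2) applied along the family of minimizing normal geodesics, using that horizontality is an open and closed condition along such geodesics. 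I would carry that step out carefully via the first variation formula, and the rest is formal.

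For the full details I refer to Molino, \cite[Lemma 6.2]{Molino}.
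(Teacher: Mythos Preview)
The paper does not give its own proof of this proposition; it simply records it with a citation to Molino \cite[Lemma~6.2]{Molino}, as you also do at the end. So there is no in-paper argument to compare against.

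Your sketch has the right opening move but a gap, and you have misidentified where the difficulty lies. The step you flag as ``the delicate point and the main obstacle'' --- that $\rho(s)$ is constant, i.e.\ each plaque $P'$ in the tube lies in a single distance cylinder $\partial\tub_\rho(P)$ --- is the \emph{easy} half; it follows from property~(2) and first variation exactly as you indicate. The actual content of the lemma is your unjustified assertion that ``the assignment $u(s)\mapsto$ (leaf through $\exp^\perp(\rho\,u(s)))$ depends only on the geodesic directions, not on the particular radius~$\rho$''; that sentence is a restatement of the desired conclusion $h_\lambda(P')\subset P''$, not an argument for it, and your ``clean packaging'' (the foliation on the tube is pulled back from one on $\nu^1 P$) is the same restatement in different words. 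What closes the gap in Molino's treatment is a \emph{second} application of the equidistance property, now centered at the plaque $P''$ through $h_\lambda(q)$ rather than at $P$: one obtains that $P'$ also lies in a distance cylinder around $P''$, and combining the cylinder constraints around $P$ and around $P''$ forces the radial geodesic from each $q'\in P'$ to meet $P''$ at parameter $(1-\lambda)\rho$, i.e.\ $h_\lambda(q')\in P''$. Your sketch invokes property~(2) only once, with $P$ as center, and then asserts the conclusion.
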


The next two propositions contain some  improvements of  Molino's results (compare with Theorem 6.1 and Proposition 6.5 of \cite{Molino}).


\begin{prop}[\cite{AlexToeben2}]
\label{lemma-almost-product}
\
Let $g$ be the original metric on $M$ and $q\in M.$  Then there exists a tubular neighborhood $\tub(P_{q})$ and  a new metric $\tilde{g}$ on $\tub(P_{q})$ with the following properties.
\begin{enumerate}
\item[(a)] For each $x\in \tub(P_{q})$ the normal space of the leaf $L_x$ is tangent to the slice $S_{\tilde{q}}$ which contains $x$, where $\tilde q\in P_q.$ 
\item[(b)] Let $\pi:\tub(P_{q})\rightarrow P_q$ be the radial projection. Then the restriction $\pi|_{P_{x}}$ is a Riemannian submersion.
\item[(c)] $\F\cap \tub(P_{q})$ is a s.r.f.
\item[(d)] The associated transverse metric is not changed, i.e., the distance between the plaques with respect to $g$ is the same distance between the plaques with respect to $\tilde{g}$.
\item[(e)] If a curve $\gamma$ is a geodesic orthogonal to $P_{q}$ with respect to the original metric $g$, then 
$\gamma$ is a geodesic orthogonal to $P_{q}$ with respect to the new metric $\tilde{g}$.
\end{enumerate}
\end{prop}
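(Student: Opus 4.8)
The plan is to linearize $\F$ in a tubular neighborhood of the plaque $P_{q}$, in the spirit of Molino's Theorem~6.1 \cite{Molino}, but keeping enough control to obtain (a)--(e) simultaneously. Using the normal exponential map $\exp^{\perp}$ of $(M,g)$ along $P_{q}$, I identify $\tub(P_{q})$ with a disk subbundle of the normal bundle $\nu P_{q}$; for $\tilde q\in P_{q}$ the slice $S_{\tilde q}$ is the image of the fibre $\nu_{\tilde q}P_{q}$, the radial projection $\pi$ is the bundle projection, and by construction the $g$-geodesics meeting $P_{q}$ orthogonally are exactly the rays $t\mapsto\exp^{\perp}_{\tilde q}(tv)$ of the slices. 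Shrinking the radius, I also invoke Molino's local description of an s.r.f.\ near a plaque: each plaque $P_{x}$ of $\F\cap\tub(P_{q})$ meets every slice, $\pi|_{P_{x}}$ is a submersion, and $\F\cap S_{\tilde q}$ is an s.r.f.\ on each slice with the induced metric.

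For the linearization I take $P_{q}$ to be a simply connected distinguished plaque, so holonomy transport of $\F$ along $P_{q}$ yields, for each $\tilde q\in P_{q}$, a foliated diffeomorphism $\psi_{\tilde q}\colon(S_{q},\F\cap S_{q})\to(S_{\tilde q},\F\cap S_{\tilde q})$ with $\psi_{q}=\id$, $\psi_{\tilde q}(q)=\tilde q$, depending smoothly on $\tilde q$. Since the homothetic transformations $h_{\lambda}$ of $\tub(P_{q})$ respect $\F$ (Proposition \ref{homothetic-lemma}) and fix $P_{q}$ pointwise, they commute with holonomy transport; in the linear coordinates on the slices provided by $\exp^{\perp}$ this forces each $\psi_{\tilde q}$ to be positively homogeneous of degree one, hence ray-preserving. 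Therefore $\Phi^{-1}(\tilde q,x):=\psi_{\tilde q}(x)$ is a diffeomorphism $P_{q}\times S_{q}\to\tub(P_{q})$ carrying the product foliation $\{P_{q}\times L'\colon L'\text{ a plaque of }\F\cap S_{q}\}$ onto $\F\cap\tub(P_{q})$, carrying $\{\tilde q\}\times S_{q}$ onto $S_{\tilde q}$ and the first-factor projection onto $\pi$, and sending rays $\{\tilde q\}\times(\text{ray of }S_{q}\text{ from }q)$ onto the geodesics orthogonal to $P_{q}$.

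I would then set $\tilde g:=\Phi^{*}\bigl(g|_{P_{q}}\oplus g_{S}\bigr)$ on $\tub(P_{q})$, where $g_{S}$ is the metric induced by $g$ on the slice $S_{q}$ (possibly adjusted, without changing the plaques or the orthogonal geodesics, so that $\F\cap S_{q}$ keeps its transverse metric). Transporting to the product $P_{q}\times S_{q}$, one checks (a)--(e) directly: the factors are $\tilde g$-totally geodesic and the $\tilde g$-normal space of a leaf $P_{q}\times L'$ is $\{0\}\oplus\nu^{S_{q}}L'$, which lies in the slice direction, giving (a); $\pi|_{P_{x}}$ becomes the isometry $(P_{q}\times\{x\},g|_{P_{q}})\to(P_{q},g|_{P_{q}})$, giving (b); a $\tilde g$-geodesic orthogonal to a leaf starts tangent to a slice, stays in it, and is there a geodesic of the s.r.f.\ $(S_{q},\F\cap S_{q},g_{S})$ orthogonal to a plaque, hence remains horizontal, giving (c); the rays of $S_{q}$ from $q$ are $g|_{S_{q}}$-geodesics (being $g$-geodesics lying in $S_{q}$), so their images are $\tilde g$-geodesics orthogonal to $P_{q}=\Phi^{-1}(P_{q}\times\{q\})$, and by the linearization every $g$-geodesic orthogonal to $P_{q}$ has this form, giving (e); finally (d) holds because the transverse geometry of $\F$ near $P_{q}$ is encoded, for both metrics, by that of $\F\cap S_{q}$, which has been kept unchanged.

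The main obstacle is the linearization step. One needs the local structure theory of s.r.f.\ to know that $\tub(P_{q})$ is a foliated bundle over $P_{q}$ with the singularly foliated slices as fibres, and the Homothetic Transformation Lemma to know that the holonomy identifications can be chosen linear on the slices; it is exactly this linearity that lets (a), (b) and (e) hold for one and the same metric $\tilde g$. The second delicate point is to carry out the construction without altering the transverse metric, which is why the slice metric $g_{S}$ in the definition of $\tilde g$ may have to be a slight modification of the bare restriction $g|_{S_{q}}$ rather than $g|_{S_{q}}$ itself.
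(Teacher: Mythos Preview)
There are two real gaps in your approach.

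\textbf{The linearization is not justified.} You assert that because the homothetic transformations $h_{\lambda}$ respect $\F$ and fix $P_{q}$ pointwise, they commute with holonomy transport, forcing $\psi_{\tilde q}$ to be positively homogeneous. This inference is not valid. If $\psi_{\tilde q}$ is built, as it must be, from the flows $\varphi^{i}_{t}$ of vector fields $X_{i}\in\singularF$, then $h_{\lambda}\circ\varphi^{i}_{t}=\varphi^{(h_{\lambda})_{*}X_{i}}_{t}\circ h_{\lambda}$; the pushforward $(h_{\lambda})_{*}X_{i}$ is again tangent to $\F$, but there is no reason it should equal $X_{i}$. So $\psi_{\tilde q}$ is foliated but not ray-preserving, and your product trivialization does not send all $g$-geodesics orthogonal to $P_{q}$ to rays of $S_{q}$. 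Without this, item (e) is not available for the metric you write down.

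\textbf{Item (d) is not established.} Your metric on the slice $S_{\tilde q}$ is $(\psi_{\tilde q})_{*}g_{S}$, the pushforward of the fixed slice metric $g_{S}=g|_{S_{q}}$. For the transverse distance between plaques to agree with the $g$-distance you would need $\psi_{\tilde q}$ to be a transverse isometry between $(S_{q},\F\cap S_{q},g_{S})$ and $(S_{\tilde q},\F\cap S_{\tilde q},g|_{S_{\tilde q}})$. You do not prove this, and the vague parenthetical about ``possibly adjusting'' $g_{S}$ does not help: any adjustment that fixes the problem on one slice will break it on another, since the slices carry genuinely different induced metrics.

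The paper avoids both difficulties by not seeking a product trivialization at all. It constructs an auxiliary \emph{regular} foliation $\F^{2}\subset\F$ from flows of vector fields $X_{i}\in\singularF$, and defines $\tilde g$ so that the slices and $\F^{2}$ are orthogonal, with the slice metric given by $\tilde g^{1}:=(\Pi|_{TS})^{*}g$, where $\Pi$ is $g$-orthogonal projection onto $D=\nu^{g}\F^{2}$. The payoff of this specific choice is the identity $\widetilde{\grad}f=\grad f+l$ with $l$ tangent to $\F^{2}\subset\F$, for any function $f$; applying this to the squared-distance transnormal function $f(y)=d(y,P_{x})^{2}$ and invoking Q.-M.~Wang's theorem on level sets of transnormal functions yields (c), (d), and (e) simultaneously. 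The point is that the slice metric is tied to $g$ \emph{at every point}, via $\Pi$, rather than transported from a single reference slice.
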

\begin{proof}

Let $X_1,\ldots, X_r \in  \singularF$ (i.e., vector fields that are always tangent to the leaves) 
so that $\{X_i(q)\}_{i=1,\ldots ,r}$ is a linear basis of $T_{q}P_{q}$.  Let $\varphi_{t_1}^1,\ldots, \varphi_{t_r}^r$ denote the associated one parameter groups and define $\varphi(t_1,\ldots,t_r,y):= \varphi_{t_1}^1\circ\cdots\circ\varphi_{t_r}^r$ where $y\in S_{q}$ and $(t_1,\ldots,t_r)$ belongs to a neighborhood $U$  of $0\in\mathbb{R}^r.$  Then, reducing $U$ and $\tub(P_{q})$ if necessary, one can guarantee the existence of   a regular foliation $\F^{2}$ with plaques   $P^{2}_{y}= \varphi(U,y).$ We note that the plaques $P^{2}_{z}\subset P_{z}$ and each plaque $P^{2}$ cuts each slice  at exactly one point.  Using the fact that  $\pi|_{P^{2}_{y}}: P^{2}_{y}\rightarrow P_{q}$ is a diffeomorphism, we can define a metric on each plaque $P^{2}_{y}$ as
$\tilde{g}^{2}:=(\pi|_{P^{2}_{y}})^{*}g.$

Now we want to define a metric $\tilde{g}^1$ on each slice $S\in \{S_{\tilde{q}}\}_{\tilde{q}\in P_{q}} .$ Set $D_{p}:=\nu_{p} L^{2}_{p}$ and define $\Pi:T_{p}M\rightarrow D_{p}$ as the orthogonal projection with respect to $g$. The fact that each plaque $P^{2}$ cuts each slice  at  one point implies that $\Pi|_{T_{p}S}:T_{p}S\rightarrow D_{p}$ is an isomorphism. Finally we define $\tilde{g}^{1}:= (\Pi|_{T_{p}S})^{*}g$ and $\tilde{g}:=\tilde{g}^{1}+\tilde{g}^{2}$, meaning that $\F^2$ and the slices meet orthogonally. Items (a) and (b) follow directly from the definition of $\tilde{g}.$ 

To prove Item (c) it suffices to prove that the plaques of $\F$ are locally equidistant to each other. Let $x\in S_{\tilde{q}}$, $P_{x}$ a plaque of $\F$. We know that the plaques of $\F$ are contained in the leaves of the foliation by distance-cylinders $\{C\}$ with axis $P_{x}$ with respect to $g$.
We will prove that each $C$ is also a distance-cylinder with axis $P_{x}$ with respect to the new metric $\tilde{g}.$ These facts and  the arbitrary choice of $x$  will imply that the plaques of $\F$ are locally equidistant to each other.

First we recall that a smooth function $f:M\rightarrow \mathbb{R}$ is called a \emph{transnormal function} with respect to the metric $g$ if  there exists a $C^{2}(f(M))$ function  $b$  such that $g(\grad f,\grad f)=b\circ f$.
According to Q-M Wang \cite{Wang} \emph{there are at most two critical level sets of the transnormal function $f$ and each regular level set of $f$ is a distance cylinder over them}. 
 
Let $f:\tub(P_{x})\rightarrow \mathbb{R}$ be a smooth transnormal function with respect to the metric $g$ so that each regular level set $f^{-1}(c)$ is a cylinder $C$ with axis $P_x$,  e.g. $f(y)=d(y,P_x)^2$. 
Let $\widetilde{\grad} f$ denote the gradient of $f$ with respect to the metric $\tilde{g}.$ It follows from the construction of $\tilde{g}$ that 
 \begin{equation}
 \label{eq-widetildegrad-grad}
 \widetilde{\grad} f=\grad f+l 
 \end{equation}
 where $l$ is a vector tangent to a plaque of $\F^{2}$ and in particular to a plaque of $\F$. 

Indeed, let $v\in D_p$ and $w:=(\Pi|_{T_{p}S})^{-1}(v)$. Then 
\begin{eqnarray*}
  g(\grad f,v) &=& d f (v) \\
               &=& d f (w)\\
               &=& \tilde{g}(\widetilde{\grad} f, w)\\
	          &=& \tilde{g}^{1}(\widetilde{\grad} f, w)\\
               &=& g(\Pi\widetilde{\grad} f,\Pi w)\\
               &=&g(\Pi \widetilde{\grad}f, v) 
\end{eqnarray*}
We conclude from the arbitrary choice of  $v\in D_p$, that  $\grad f=\Pi \widetilde{\grad} f,$  and hence  
$ \widetilde{\grad} f=\grad f+l$.

Equation (\ref{eq-widetildegrad-grad}) implies that $f$ is a also a transnormal function with respect to the metric $\tilde{g}$, i.e., 
\begin{equation}
\label{f-transnormal-g-tildeg}
\tilde{g}(\widetilde{\grad} f,\widetilde{\grad} f)=b\circ f, 
\end{equation}
Indeed, 
\begin{eqnarray*}
\tilde{g}(\widetilde{\grad} f,\widetilde{\grad} f)&=& d f(\widetilde{\grad} f)\\
                                                  &=& d f(\grad f)\\
                                                  &=& g(\grad f, \grad f)\\
                                                  &=& b\circ f
\end{eqnarray*}

Using a local version of Q-M Wang's theorem \cite{Wang}, we conclude that each regular level set of $f$ (i.e. $C$ ) is a distance cylinder around $P_{x}$ with respect to the metric $\tilde{g}$.



To prove item (d) we have to prove that the distance between the cylinder $C$ and the plaque $P_{x}$ is the same for both metrics.
Let $f$ be the transnormal function (with respect to $g$) defined above. 
According to Q-M Wang \cite{Wang}  for $k=f(P_x)$ and a regular value $c$  we have  
$d(P_{x},f^{-1}(c))=\int_{c}^{k}\frac{ds}{\sqrt{b(s)}}.$ Since $f$ is also a transnormal function with respect to $\tilde{g}$ (see equation (\ref{f-transnormal-g-tildeg})), we conclude that $d(P_{x},C)=\tilde{d}(P_{x},C),$
for  $C=f^{-1}(c).$

Finally we prove item (e). We consider the transnormal function $f$ above with $x=q$. 
In this case,  
equation (\ref{eq-widetildegrad-grad}) and the fact that $\grad f\in D_p\cap T_pS$ imply that $\grad f=\widetilde\grad f$. On the other hand, the integral curves of the gradient of a transnormal function are geodesic segments up to reparametrization (see e.g. \cite{Wang}). Therefore the radial geodesics of $P_{q}$ coincide in both metrics. This finishes the proof.


\end{proof}

\begin{prop}[\cite{AlexToeben2}]
\label{lemma-metric-in-S}
There exists  a new metric $g_0$  on $\tub(P_q)$ with the following properties:
\begin{enumerate}
\item[(a)] Consider the tangent space $T_{\tilde{q}}S_{\tilde{q}}$ with the metric $g$ and $S_{\tilde{q}}$ with the metric $g_0$. Then  $\exp_{\tilde{q}}:T_{\tilde{q}}S_{\tilde{q}}\rightarrow S_{\tilde{q}}$ is an isometry, for $\tilde{q}\in P_{q}.$
\item[(b)] For this new metric $g_0$ we have that $\F\cap S_{\tilde{q}}$ and $\F$ restricted to $\tub(P_q)$ are also s.r.f. 
\item[(c)]  For each $x\in \tub(P_{q})$ the normal space of the leaf $L_x$ is tangent to the slice $S_{\tilde{q}}$ which contains $x$, where $\tilde q\in P_q.$
\end{enumerate}

\end{prop}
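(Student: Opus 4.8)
The plan is to modify the ``almost product'' metric $\tilde g=\tilde g^{1}+\tilde g^{2}$ produced by Proposition \ref{lemma-almost-product}, altering it only in the slice directions. For $\tilde q\in P_{q}$ let $\exp_{\tilde q}\colon\nu_{\tilde q}P_{q}=T_{\tilde q}S_{\tilde q}\to S_{\tilde q}$ denote the (original) normal exponential map of the plaque, a diffeomorphism of a ball onto the slice; I would set $g_{0}^{1}|_{S_{\tilde q}}:=(\exp_{\tilde q})_{*}\bigl(g_{\tilde q}|_{\nu_{\tilde q}P_{q}}\bigr)$, i.e.\ the push-forward of the constant Euclidean metric $g_{\tilde q}$, keep $\tilde g^{2}$ on the plaques of $\F^{2}$, declare the slices and the $\F^{2}$-plaques to be orthogonal, and put $g_{0}:=g_{0}^{1}+\tilde g^{2}$. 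Smooth dependence of the normal exponential on its foot point makes $g_{0}$ a smooth metric on $\tub(P_{q})$, and (a) holds by construction. Item (c) is then immediate: it is proved exactly as Proposition \ref{lemma-almost-product}(a), since that argument uses only the orthogonal splitting $T\tub(P_{q})=TS\oplus T\F^{2}$, the inclusion $TP^{2}\subset T\F$, and the (metric-free) transversality of leaves and slices, none of which has changed.

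The content is (b), which I would prove first inside one slice. A radial geodesic issuing from $\tilde q$ is a $g$-geodesic of $M$ perpendicular at $\tilde q$ to the leaf through $\tilde q$, so by condition (2) of the s.r.f.\ $\F$ it is perpendicular to every leaf it meets; hence each leaf of $\F\cap S_{\tilde q}$ lies in a level set of $y\mapsto d_{g}(\tilde q,y)$, that is, in a $g$-geodesic sphere centred at $\tilde q$. In $\exp_{\tilde q}$-normal coordinates these are the Euclidean spheres about the origin, and therefore also the $g_{0}^{1}$-geodesic spheres about $\tilde q$. Moreover, by the Homothetic Transformation Lemma (Proposition \ref{homothetic-lemma}) the homotheties $h_{\lambda}$ attached to $P_{q}$ preserve $\F$, and in these coordinates $h_{\lambda}|_{S_{\tilde q}}$ is the linear dilation $v\mapsto\lambda v$, which rescales $g_{0}^{1}$. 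Thus, via the isometry $\exp_{\tilde q}$, $(S_{\tilde q},g_{0}^{1},\F\cap S_{\tilde q})$ is an open piece of the infinitesimal foliation of $\F$ at $\tilde q$ on the Euclidean space $(\nu_{\tilde q}P_{q},g_{\tilde q})$ --- a foliation whose leaves lie in round spheres about the origin and which is invariant under all dilations --- and such a foliation is a s.r.f.\ for the flat metric (a standard fact about the infinitesimal foliation of an s.r.f.; see \cite{Molino}). This proves the slicewise part of (b) and re-proves (c).

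It remains to upgrade this to: $\F\cap\tub(P_{q})$ is a s.r.f.\ for $g_{0}$. Condition (1) is metric-free, so only condition (2) is at stake. Two routes appear available. One is to set up the almost-product structure so that the fibres of $\nu P_{q}$ are identified by normal parallel transport along $P_{q}$ --- an isometry, the normal connection being metric --- so that the flat slice metrics $g_{0}^{1}|_{S_{\tilde q}}$ glue into a metric for which each slice is $g_{0}$-totally geodesic; then a $g_{0}$-geodesic perpendicular at a point to a leaf has, by (c), initial velocity tangent to a slice, remains in that slice, and is horizontal by the slicewise part of (b). If this cannot be arranged cleanly together with Proposition \ref{lemma-almost-product}, the alternative is to repeat the transnormal-function argument in the proof of Proposition \ref{lemma-almost-product}(c): for a fixed plaque $P_{x}$, show that the $g_{0}$-gradient of a transnormal function $f$ whose regular level sets are the distance cylinders about $P_{x}$ differs from the gradient appearing there only by a vector tangent to $\F$, conclude that $f$ is transnormal for $g_{0}$ as well, and invoke the local version of Q-M Wang's theorem to see that its level sets are still distance cylinders about $P_{x}$; since $P_{x}$ is arbitrary, the plaques of $\F$ are locally equidistant for $g_{0}$, whence $\F\cap\tub(P_{q})$ is a s.r.f.

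The main obstacle is precisely this last step. The flattening is carried out slice by slice, with no control a priori on how $g_{0}^{1}$ varies between neighbouring slices, and in general the slices are not $g_{0}$-totally geodesic, so the horizontality of a $g_{0}$-geodesic that leaves the slice through its initial point is not formal. What rescues the argument is again the Homothetic Transformation Lemma: the invariance of $\F$ inside every slice under the dilations $v\mapsto\lambda v$ --- which are homotheties of the new flat metric --- is exactly what keeps $f$ transnormal and the distance cylinders about $P_{x}$ intact, i.e.\ what makes the passage from $\tilde g$ to $g_{0}$ a change of metric compatible with the foliation.
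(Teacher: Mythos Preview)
The paper does not prove this proposition; it is quoted from \cite{AlexToeben2} and thereafter used as input (for instance in the proof of Lemma~\ref{prop-flat-metric}), so there is no proof here to compare your attempt against line by line.

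Your construction of $g_{0}=g_{0}^{1}+\tilde g^{2}$ and the arguments for (a), (c), and the slicewise half of (b) are correct and are the natural ones; identifying $(S_{\tilde q},g_{0}^{1},\F\cap S_{\tilde q})$ with the infinitesimal foliation on the flat normal fibre and citing Molino for its s.r.f.\ property is exactly right. The gap is the full tubular half of (b), and you have located it accurately. Route~1 does not follow from the construction: nothing forces the slices to be $g_{0}$-totally geodesic. Route~2 is the right idea in spirit but does not carry over as written: in the proof of Proposition~\ref{lemma-almost-product}(c) the key identity $\widetilde{\grad} f=\grad f+l$ with $l\in T\F^{2}$ depends on the specific form $\tilde g^{1}=(\Pi|_{TS})^{*}g$; once $\tilde g^{1}$ is replaced by the flat $g_{0}^{1}$, the $TS$-components of the two gradients are related by a change of metric on the slice and differ by a vector that is merely tangent to $S$, not to a leaf, so transnormality of $f$ for $g_{0}$ is not immediate. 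Your closing appeal to the Homothetic Transformation Lemma names the right ingredient but does not explain how it actually restores transnormality.

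A cleaner route---and the one the paper itself adopts when it needs the analogous statement over the minimal stratum (see the proof of Lemma~\ref{prop-flat-metric})---is to bypass the transnormal-function argument on $\tub(P_{q})$ and instead verify condition~(c) of Proposition~\ref{prop-equivalencia-df-FR} stratum by stratum, then conclude via Proposition~\ref{prop-metric-stratum}. Since by your item~(c) the normal space of every leaf lies in a slice, the transverse metric $(g_{0})_{T}$ is computed entirely from $g_{0}^{1}$; your slicewise s.r.f.\ gives $L_{X}(g_{0})_{T}=0$ for $X$ tangent to $\F\cap S_{\tilde q}$, and what remains is the vanishing for $X$ tangent to $\F^{2}$, which is where the dilation invariance and the specific construction of $\F^{2}$ enter. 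That last verification is the substance the paper defers to \cite{AlexToeben2}.
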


\begin{rem}
\label{rem-prop-flat-metric-in-S}
Clearly a curve $\gamma$ which is a geodesic orthogonal to $P_{q}$ with respect to the original metric, remains a  geodesic orthogonal to $P_{q}$ with respect to the new metric $g_0$.
\end{rem}


Now we briefly recall some properties about the stratification of $M$.

\begin{prop}[\cite{Molino}]
Each stratum is an embedded submanifold and a union of geodesics that are perpendicular to the leaves. 
\end{prop}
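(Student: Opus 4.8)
The plan is to reduce the statement to a local computation inside a single slice, where the Homothetic Transformation Lemma forces a linear structure. Fix $q$ in a stratum, say $q\in\Sigma_{(k)}$ (the union of all leaves of dimension $k:=\dim L_q$), and take a distinguished tubular neighborhood $\tub(P_q)$ of the plaque $P_q$ as provided by Propositions \ref{lemma-almost-product} and \ref{lemma-metric-in-S}. After the (harmless) metric modification we may assume: the slice $S_q$ is flat and totally geodesic, with $\exp_q\colon\nu_q L_q=T_qS_q\to S_q$ a linear isometry; the radial projection $\pi\colon\tub(P_q)\to P_q$ restricts to a Riemannian submersion on each plaque $P_x$; $\F\cap S_q$ is again a s.r.f.; and by Proposition \ref{lemma-almost-product}(e) (cf.\ Remark \ref{rem-prop-flat-metric-in-S}) the radial geodesics of $P_q$ are unchanged, so the straight rays of the flat slice remain $g$-geodesics orthogonal to $L_q$.

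First I would do the dimension bookkeeping. Since $\pi|_{P_x}$ is a submersion, its fibre $P_x\cap S_q$ has dimension $\dim P_x-k$, hence $\dim L_x=k+\dim(L_x\cap S_q)$ for $x$ near $q$ in $S_q$. Thus $x\in\Sigma_{(k)}$ exactly when $L_x\cap S_q$ is $0$-dimensional, i.e.\ when $x$ belongs to the minimal stratum $\Sigma_0$ of $\F\cap S_q$ (which contains $q$, since $L_q\cap S_q=\{q\}$). Through the product structure over $P_q$ this identifies $\Sigma_{(k)}\cap\tub(P_q)$ fibrewise with the minimal stratum of the slice foliation, so everything comes down to describing $\Sigma_0\subseteq S_q$.

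The main step is to prove that $\Sigma_0$ is a linear disc near $q$. Identify $S_q$ with a ball in $\nu_q L_q$ via $\exp_q$, so that $g_0$ is Euclidean and $\exp_p$ is affine for every $p\in S_q$. Applying the Homothetic Transformation Lemma (Proposition \ref{homothetic-lemma}) to $\F\cap S_q$ relative to the $0$-dimensional leaf $\{q\}$ shows the dilations $h^q_\lambda\colon v\mapsto q+\lambda(v-q)$ preserve $\F\cap S_q$; applying it relative to another $0$-dimensional leaf $\{p\}$ with $p\in\Sigma_0$ near $q$ shows the dilations $h^p_\mu$ about $p$ do as well. Composing, $h^p_{1/\lambda}\circ h^q_\lambda$ is the translation by $(1-\tfrac1\lambda)(p-q)$, so the germ of $\F\cap S_q$ at $q$ is invariant under all sufficiently small translations in the direction $p-q$; taking $W_q:=\operatorname{span}\{p-q:p\in\Sigma_0\text{ near }q\}\subseteq\nu_qL_q$ (a finite span), the germ is invariant under a whole neighborhood of $0$ in $W_q$. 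Such translations carry $0$-dimensional leaves to $0$-dimensional leaves and sweep $q$ over $(q+W_q)\cap(\text{ball})$, while by the definition of $W_q$ no point of $\Sigma_0$ lies off $q+W_q$; hence $\Sigma_0=(q+W_q)\cap(\text{ball})$, a totally geodesic flat disc swept out by the rays $t\mapsto q+tv$, $v\in W_q$.

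Finally I would reassemble over the plaque. The subspaces $W_{\tilde q}\subseteq\nu_{\tilde q}L_q$ at the points $\tilde q\in P_q$ correspond under the local equivalences along $P_q$ built into Proposition \ref{lemma-almost-product}, hence form a smooth vector subbundle $W\subseteq\nu L_q|_{P_q}$; since the normal exponential map of $L_q$ is an embedding on a small disc bundle, $\Sigma_{(k)}\cap\tub(P_q)=\exp\bigl(W\cap(\text{disc bundle})\bigr)$ is an embedded submanifold, and it is precisely the union of the curves $t\mapsto\exp_{\tilde q}(tv)$ $(\tilde q\in P_q,\ v\in W_{\tilde q})$, each a $g$-geodesic orthogonal to $L_q$ and therefore, by condition (2) in the definition of a s.r.f., orthogonal to every leaf it meets — together with $L_q\subseteq\Sigma_{(k)}$ itself, which these already cover. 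Since $q$ was arbitrary, every stratum is an embedded submanifold and a union of geodesics perpendicular to the leaves. I expect the central difficulty to be the third step: the homotheties about different centres are defined only on successively shrinking tubes, so one must track domains carefully when composing them, and, more conceptually, one must extract from the resulting translation symmetry that $\Sigma_0$ is genuinely a linear subspace and not merely a dilation-invariant cone (which in general need not be a submanifold at all) — this is exactly where transnormality, through the Homothetic Transformation Lemma, does the essential work.
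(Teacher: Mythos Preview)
The paper does not prove this proposition; it simply attributes it to Molino \cite{Molino}. Your argument is essentially Molino's own proof (Lemma~6.3 and the surrounding discussion in \cite{Molino}): pass to a flat slice via Propositions~\ref{lemma-almost-product}--\ref{lemma-metric-in-S}, reduce to showing that the minimal stratum $\Sigma_0$ of the slice foliation is a linear disc, and obtain this from the Homothetic Transformation Lemma applied at two distinct $0$-dimensional leaves. The reasoning is correct.

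On the point you flag as the central difficulty: the domain bookkeeping is genuine but harmless. The Homothetic Lemma in fact holds for every $\lambda\in(0,1]$ (the restriction to ``small'' $\lambda$ is only to ensure the map stays inside the tube), so for $\lambda$ near $1$ and $p$ sufficiently close to $q$ both $h^q_\lambda$ and its inverse, as well as $h^p_{1/\lambda}$, are defined on a common neighborhood of $q$, and the composite translation $(1-\tfrac{1}{\lambda})(p-q)$ is as small as you like. If you prefer to avoid this entirely, there is an equivalent and slightly cleaner route: the Homothetic Lemma at $q$ shows $\Sigma_0$ is a cone with vertex $q$, and the same applied at each nearby $p\in\Sigma_0$ shows it is a cone with vertex $p$; a subset of Euclidean space that is a cone at every one of its points is locally an affine subspace (from $\lambda v+(1-\lambda)w\in\Sigma_0$ for all $\lambda>0$ one gets closure under sums and negatives), which is exactly what you need.
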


With this result one can easily see that the induced foliation on each stratum is a Riemannian foliation and that the restriction of the metric to the stratum is a bundle-like metric.
The observation that every geodesic perpendicular to a stratum is perpendicular to the leaves allow us to adapt the argument of Proposition \ref{homothetic-lemma} and conclude the next result.

\begin{prop}[\cite{Molino}]
\label{homothetic-lemma-stratum}
Let $\Sigma$ be a singular stratum. Consider a tubular neighborhood $\tub(U)$ of a relatively compact open set $U$ of $\Sigma$. Then the plaques of $\F\cap \tub(U)$ are contained in the  the cylinders of axis $U$. In addition, the foliation is locally invariant by the homothetic transformations with respect to the stratum. 
\end{prop}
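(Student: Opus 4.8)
The strategy is to reduce the statement to the already-established case of a single plaque, namely Proposition \ref{homothetic-lemma}, by working locally along $U$. First I would fix a relatively compact open set $U$ in the singular stratum $\Sigma$ and a tubular neighborhood $\tub(U)$ small enough that the normal exponential map $\exp^{\nu}:\nu U \to \tub(U)$ is a diffeomorphism onto its image and that the foliation $\F\cap\tub(U)$ is well-behaved. For each $p\in U$, choose a plaque $P$ of $\F\cap\Sigma$ through $p$ with $P\subset U$; since the induced foliation on $\Sigma$ is Riemannian and the metric restricted to $\Sigma$ is bundle-like (as noted just before the statement), these plaques are locally equidistant inside $\Sigma$. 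The key geometric input is that every geodesic perpendicular to $\Sigma$ is horizontal, i.e. perpendicular to every leaf of $\F$ it meets; this is exactly what lets the homothetic-transformation argument for a plaque $P$ (Proposition \ref{homothetic-lemma}) be run with $\Sigma$, or a piece $U$ of it, in place of $P$.

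Next I would carry out the two assertions in turn. For the first, that the plaques of $\F\cap\tub(U)$ lie in the distance-cylinders of axis $U$: the function $f(x)=\dist(x,U)^2$ is, on $\tub(U)$, a transnormal-type function whose gradient flow consists of the normal geodesics emanating from $U$. Because each such geodesic is horizontal, the leaves of $\F$ are unions of integral curves of $\grad f$ issuing from a single plaque through their foot-point in $U$; hence each leaf, and so each plaque of $\F\cap\tub(U)$, is contained in a single level set of $f$, that is, in a distance-cylinder with axis $U$. (Equivalently, one applies Molino's Homothetic Transformation Lemma slice-by-slice: through each $p\in U$ the leaf-plaque of $\F$ lies in $\tub(P_p)$, and Proposition \ref{homothetic-lemma} puts it in a distance-cylinder around the plaque $P_p\subset\Sigma$; patching these over $U$ gives the cylinders of axis $U$.) For the second assertion, local invariance under the homothetic transformations $h_\lambda$ relative to $\Sigma$: these $h_\lambda$ are defined, as in the paragraph preceding Proposition \ref{homothetic-lemma}, by conjugating the fiberwise homotheties $v\mapsto\lambda v$ of $\nu U$ through $\exp^{\nu}$. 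On each normal slice $S_p = \exp^{\nu}(\nu_p U)$, $h_\lambda$ restricts to exactly the homothetic transformation relative to the plaque $P_p$ (since $\nu_p U\subset\nu_p P_p$ and the two normal exponential maps agree along the horizontal geodesics through $p$), so Proposition \ref{homothetic-lemma} gives that $h_\lambda$ sends plaques of $\F$ to plaques of $\F$ on $S_p$; since $\F\cap\tub(U)$ is determined by its traces on the slices $\{S_p\}_{p\in U}$ together with the (radial) identification of nearby slices, and $h_\lambda$ is compatible with that identification, $h_\lambda$ preserves $\F\cap\tub(U)$.

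The main obstacle I anticipate is making the slice-by-slice reduction genuinely rigorous rather than merely plausible: one must check that the homothety $h_\lambda$ relative to $\Sigma$ really does restrict, on each normal slice, to the homothety relative to the corresponding plaque $P_p\subset\Sigma$ — this hinges on the fact that the normal geodesics to $\Sigma$ through $p$ coincide with the normal geodesics to $P_p$ through $p$ that stay in the slice, which in turn is the "horizontal geodesic" property of $\Sigma$ — and that these local pictures glue consistently as $p$ varies over $U$, using that $\F\cap\Sigma$ is a Riemannian foliation so that nearby slices are carried to one another isometrically in the relevant directions. Once this compatibility is in hand, both conclusions follow by transporting Proposition \ref{homothetic-lemma} and Wang's cylinder theorem (as used in the proof of Proposition \ref{lemma-almost-product}) from the plaque setting to the stratum setting.
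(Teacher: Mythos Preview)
Your core idea matches the paper's own (brief) indication exactly: the paper does not give a detailed proof but simply remarks that ``the observation that every geodesic perpendicular to a stratum is perpendicular to the leaves allow[s] us to adapt the argument of Proposition~\ref{homothetic-lemma}.'' That is, one replaces the plaque $P$ by $U\subset\Sigma$ wholesale and reruns Molino's homothetic argument verbatim, using only the fact that normal geodesics to $\Sigma$ are horizontal. You identify this key input correctly.

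Two issues, however. First, there is a logical slip in your first assertion: you write that ``the leaves of $\F$ are unions of integral curves of $\grad f$,'' but this is the opposite of what happens. The integral curves of $\grad f$ are the horizontal radial geodesics, which are \emph{perpendicular} to the leaves; the correct deduction is that $\grad f$ is everywhere orthogonal to the leaves, hence $f$ is constant along each plaque, hence plaques sit in level sets of $f$. Second, your slice-by-slice reduction for the homothetic invariance is an unnecessary detour that creates exactly the gluing obstacle you worry about at the end. A plaque of $\F$ in $\tub(U)$ meets many slices $S_p$, and showing that the $h_\lambda$-images of these traces reassemble into a single plaque is not automatic from the slice picture. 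The direct route---apply Molino's argument with $U$ in place of $P$, using that the radial geodesics from $U$ are horizontal so that $h_\lambda$ moves each point along a horizontal geodesic and hence (by the cylinder containment just proved and the argument of Proposition~\ref{homothetic-lemma}) carries plaques to plaques---avoids this entirely and is what the paper has in mind.
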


An important property of a s.r.f $\F$ is the so called equifocality of $\F.$ Roughly speaking this means that the ``parallel sets" of each leaf  are leaves of $\F$. In order to make this concept precise, we need to recall the definition of foliated vector field. Let $L$ be a leaf of $\F$, $\Sigma_{L}$  the stratum of $L$ and $\singularFsigma$ the module of smooth vector fields on $\Sigma_{L}$ that are tangent at each point to the corresponding leaf.  A vector field $\xi$ on $\Sigma_{L}$ is called \emph{foliated} if for each vector field $Y\in\singularFsigma$ the Lie bracket $[\xi,Y]$ also belongs to $\singularFsigma$.
If we consider a local submersion $f$ which describes the plaques of $\F|_{\Sigma_{L}}$ in a neighborhood of a point of $L$, then a normal foliated vector field is a normal projectable/basic vector field with respect to $f.$

\begin{rem}
The Bott or basic connection $\nabla$ of the foliation $\F|_{\Sigma_{L}}$ is a connection of $T\Sigma$ with $\nabla_XY=[X,Y]^{\nu\F}$ whenever $X\in \singularFsigma$ and $Y$ is vector field of the normal bundle $\nu\F$ of the foliation. Here the superscript $\nu\F$ denotes projection onto $\nu\F$. A foliated vector field clearly is parallel with respect to the Bott connection. This connection can be restricted to the normal bundle of a leaf.

\end{rem} 

\begin{thm}[\cite{AlexToeben2} Equifocality of $\F$]
\label{thm-s.r.f.-equifocal}
Let $\F$ be a s.r.f. on a complete Riemannian manifold $M$. Then for each  point $p$ there exists a neighborhood $U$ of $p$ in $L_{p}$ such that
\begin{enumerate}
\item[(1)] For each   normal foliated vector field $\xi$  along $U$  the derivative of  the map    $\eta_{\xi}:U\rightarrow M,$ defined as $\eta_{\xi}(x):=\exp_{x}(\xi),$ has constant rank.
\item[(2)] $W:=\eta_{\xi}(U)$ is an open set of $L_{\eta_{\xi}(p)}$.
\end{enumerate}
\end{thm}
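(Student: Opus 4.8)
The plan is to reduce the statement to the situation inside a distinguished tubular neighborhood where the structural results of the previous propositions apply, and then to run a Jacobi-field / deformation argument. First I would fix $p$ and pass to the stratum $\Sigma_p$ of $p$. Choose a plaque $P$ of $\F|_{\Sigma_p}$ through $p$ and a tubular neighborhood $\tub(P)$ of $P$ in $M$; by Proposition \ref{homothetic-lemma-stratum} the foliation $\F\cap\tub(P)$ is locally invariant under the homothetic transformations $h_\lambda$ with respect to $\Sigma_p$, and by Proposition \ref{lemma-almost-product} and Proposition \ref{lemma-metric-in-S} we may, after a harmless change of metric on $\tub(P)$ that does not affect which curves are horizontal geodesics emanating orthogonally from $P$ (Remark \ref{rem-prop-flat-metric-in-S}), assume the normal space $\nu_x L_x$ is tangent to the slice through $x$ and $\exp_{\tilde q}\colon T_{\tilde q}S_{\tilde q}\to S_{\tilde q}$ is an isometry. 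This lets us regard $\eta_\xi$ on a small neighborhood $U$ of $p$ in $L_p$ as the time-one flow along radial horizontal geodesics issuing from the plaque, staying inside $\tub(P)$.

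Next, for a normal foliated vector field $\xi$ along $U$ I would write $\eta_\xi(x)=\exp_x(\xi(x))$ and compute $d(\eta_\xi)_x$ on a tangent vector $v\in T_xU\subset T_xL_p$. The derivative decomposes, as usual, into a Jacobi field $J_v$ along the geodesic $t\mapsto\exp_x(t\xi(x))$ with $J_v(0)=v$ and $J_v'(0)=\nabla_v\xi$; here the key point is that because $\xi$ is foliated, $\nabla_v\xi$ is controlled by the Bott connection, so $J_v$ is a \emph{$\F$-Jacobi field} in the sense that it stays tangent to the leaves of $\F$ along the whole radial geodesic. The homothety invariance from Proposition \ref{homothetic-lemma-stratum} (rescaling $\xi$ by $\lambda$) shows that the space of such Jacobi fields varies nicely, and in particular that the rank of $d(\eta_\xi)_x$, which equals $\dim U$ minus the dimension of the space of $\F$-Jacobi fields vanishing at the endpoint, is lower semicontinuous; combined with the homothety-invariance argument that forces it to be constant along the radial rays, one obtains that the rank is locally constant in $x$ after possibly shrinking $U$. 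That is part (1).

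For part (2), once $d(\eta_\xi)$ has constant rank, $W=\eta_\xi(U)$ is an immersed submanifold of $M$ of that dimension, and it is contained in a single leaf of $\F$: indeed the radial horizontal geodesics carry plaques to subsets of leaves (Proposition \ref{homothetic-lemma}, applied to the plaques of $\F$ inside $\tub(P)$), so $\eta_\xi$ maps the plaque $U$ into $L_{\eta_\xi(p)}$. A dimension count — $\dim W\ge$ the rank, and $W\subset L_{\eta_\xi(p)}$ forces $\dim W\le\dim L_{\eta_\xi(p)}$, while the reverse inequality follows because horizontality of the geodesics prevents the leaf dimension from dropping along the deformation — shows $\dim W=\dim L_{\eta_\xi(p)}$, so $W$ is open in that leaf. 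I expect the main obstacle to be the constant-rank claim in part (1): one must carefully identify the derivative of $\eta_\xi$ with the appropriate $\F$-Jacobi fields and then use both the foliated property of $\xi$ and the homothety invariance to rule out a rank jump, rather than merely getting lower semicontinuity. The change-of-metric reductions are technical but routine given Propositions \ref{lemma-almost-product} and \ref{lemma-metric-in-S}; the genuinely delicate step is the interplay between the Bott connection and the second-variation formula that pins the rank down.
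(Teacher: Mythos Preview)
The paper does not actually prove Theorem \ref{thm-s.r.f.-equifocal}; it is quoted from \cite{AlexToeben2} and used as a black box (with only the remark that the same proof there extends from regular to singular leaves when one restricts to foliated fields tangent to the stratum). So there is no ``paper's own proof'' to compare your proposal against.

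On the mathematical content of your sketch: the overall shape---reduce to a distinguished tubular neighborhood via Propositions \ref{lemma-almost-product}, \ref{lemma-metric-in-S}, \ref{homothetic-lemma-stratum}, then analyze $d\eta_\xi$ through Jacobi fields---is reasonable, but the constant-rank argument has a real gap. The homothety $h_\lambda$ rescales the \emph{normal} direction, so it relates $\eta_{\lambda\xi}$ to $\eta_\xi$ at the \emph{same} base point; it does not by itself control how the rank of $d(\eta_\xi)_x$ varies as $x$ moves within $U\subset L_p$, which is what you need. Lower semicontinuity alone gives nothing here. Likewise, the assertion that $J_v$ ``stays tangent to the leaves along the whole radial geodesic'' because $\xi$ is foliated is precisely the heart of the equifocality statement and cannot be invoked without argument; you would need to show that the variation $s\mapsto\exp_{x(s)}(t\,\xi(x(s)))$ sweeps out curves in leaves for each fixed $t$, which is essentially part (2) at intermediate times. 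Finally, in your dimension count for part (2) the claim that ``horizontality prevents the leaf dimension from dropping'' is false in general---horizontal geodesics can hit lower strata---so you would need Corollary \ref{singular-points-isolated} (which itself is a \emph{consequence} of this theorem in \cite{AlexToeben2}) or an independent argument to control that. If you want a self-contained proof you should consult \cite{AlexToeben2} directly, where the mechanism making the rank constant is worked out.
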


\begin{cor}[\cite{AlexToeben2}]
\label{cor}
Let $L_{p}$ be a regular  leaf with trivial holonomy and $\Xi$ denote the set of all  normal foliated vector  fields along $L_{p}.$ 
\begin{enumerate}
\item[(1)] Let $\xi\in \Xi$. Then $\eta_{\xi}:L_{p}\rightarrow L_{q}$ is a covering map if $q=\eta_{\xi}(p)$ is a regular point.
\item[(2)]  $\F=\{\eta_{\xi}(L_{p})\}_{\xi\in \, \Xi},$ i.e., we can reconstruct the singular foliation by taking all parallel submanifolds of the regular leaf $L_{p}.$ 
\end{enumerate}
\end{cor}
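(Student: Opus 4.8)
\section*{Proof proposal for Corollary \ref{cor}}

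The plan is to reduce both statements to the local equifocality result of Theorem \ref{thm-s.r.f.-equifocal}, using the hypothesis that $L_{p}$ is a regular leaf with trivial holonomy so that normal foliated vector fields exist globally along $L_{p}$ and the normal bundle behaves well.

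For item (1), I would first observe that since $q=\eta_{\xi}(p)$ is a regular point, the leaf $L_{q}$ has the same dimension as $L_{p}$. The idea is to show $\eta_{\xi}\colon L_{p}\to L_{q}$ is a local diffeomorphism that is surjective and a proper-enough map to be a covering. Concretely: cover $L_{p}$ by neighborhoods $U_{\alpha}$ as in Theorem \ref{thm-s.r.f.-equifocal}, so that on each $U_{\alpha}$ the map $\eta_{\xi}$ has constant rank and $\eta_{\xi}(U_{\alpha})$ is open in $L_{\eta_{\xi}(p)}$. Since $\dim L_{p}=\dim L_{q}$ and the image is open in a leaf of the same dimension, the constant rank must equal $\dim L_{p}$, so $\eta_{\xi}$ is a local diffeomorphism near every point of $L_{p}$. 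Next I would show the image is both open and closed in $L_{q}$: openness is immediate from the local diffeomorphism property, and closedness follows from equifocality applied along the \emph{whole} leaf together with a limiting argument — if $\eta_{\xi}(x_{n})\to y\in L_{q}$, one transports $\xi$ back and forth to realize $y$ as $\eta_{\xi}(x)$ for a limit point $x$ of $(x_{n})$ (here one uses that $L_{p}$, being a leaf, together with completeness of $M$, lets the relevant exponential constructions converge). Hence $\eta_{\xi}$ is surjective. Finally, to upgrade a surjective local diffeomorphism to a covering map one needs the path-lifting property; I would argue that any path in $L_{q}$ lifts by using the inverse normal foliated vector field $-\xi$ (parallel-transported along the appropriate leaf) to define a local inverse, and patching these local inverses along the path — the constant-rank/openness statements of Theorem \ref{thm-s.r.f.-equifocal} guarantee the lift does not escape to infinity in finite path-length because $M$ is complete and the focal radii are locally uniform.

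For item (2), the inclusion $\{\eta_{\xi}(L_{p})\}_{\xi\in\Xi}\subseteq\F$ is exactly part (2) of Theorem \ref{thm-s.r.f.-equifocal} applied globally (each $\eta_{\xi}(L_{p})$ is contained in, hence by item (1) when the endpoint is regular equals, a single leaf; when the endpoint is singular one still gets containment in a leaf). For the reverse inclusion, let $L$ be any leaf of $\F$ and pick $z\in L$. Since $L_{p}$ is regular, a generic point of $M$ lies on a regular leaf, and by the perpendicularity axiom (condition (2) in the definition of s.r.f.) one can join $L_{p}$ to $z$ by a horizontal geodesic $\gamma$ meeting $L_{p}$ orthogonally at some $x\in L_{p}$; set $v:=\gamma'(0)\in\nu_{x}L_{p}$. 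The key point is that $v$ extends to a global normal foliated vector field $\xi\in\Xi$: this is where trivial holonomy of $L_{p}$ is essential, since it makes the Bott/basic parallel transport in the normal bundle $\nu L_{p}$ well-defined independently of the path, so one can parallel-transport $v$ over all of $L_{p}$ using the basic connection from the Remark preceding Theorem \ref{thm-s.r.f.-equifocal}. Then $\eta_{\xi}(x)=\exp_{x}(v)=z$, so $z\in\eta_{\xi}(L_{p})$ and therefore $L\subseteq\eta_{\xi}(L_{p})$; combined with item (1) this gives $L=\eta_{\xi}(L_{p})$.

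The main obstacle I anticipate is the global-to-local passage in item (1): Theorem \ref{thm-s.r.f.-equifocal} is purely local (a neighborhood $U$ of each point), so proving that $\eta_{\xi}$ is globally a covering — in particular that the fibers are discrete and the path-lifting holds without escape — requires a uniformity/compactness input that is not literally in the statement. If $L_{p}$ is compact this is routine; in general one must either restrict to relatively compact pieces and patch, or invoke a uniform bound on focal radii along $L_{p}$ coming from the homothetic transformation lemma (Proposition \ref{homothetic-lemma}) and the bundle-like structure. A secondary delicate point is checking that the basic connection on $\nu L_{p}$ really has trivial monodromy under the trivial-holonomy hypothesis, and that its parallel sections are exactly the normal foliated vector fields $\Xi$ — this is standard foliation theory but deserves an explicit sentence.
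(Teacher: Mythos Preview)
The paper does not give a proof of this corollary: it is quoted verbatim from \cite{AlexToeben2} as background needed for the blow-up construction, so there is no ``paper's own proof'' to compare your proposal against.

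That said, your outline is the standard route and is essentially correct. Equifocality plus $\dim L_p=\dim L_q$ forces $\eta_\xi$ to be a local diffeomorphism; path-lifting via Bott-parallel transport of the normal segment then yields the covering property; and trivial holonomy is exactly what makes the Bott connection on $\nu L_p$ have trivial monodromy, so that $\Xi$ is identified with $\nu_pL_p$ by evaluation and every normal vector extends globally. You have also correctly flagged the two places where care is needed (the non-compact case in item~(1), and the identification of $\Xi$ with Bott-parallel sections). One further subtlety in item~(2) you do not mention: the existence of a horizontal geodesic from $L_p$ to an arbitrary point $z$ is not immediate when $L_p$ is not closed in $M$, since a minimizing geodesic to $\overline{L_p}$ may land on a leaf in $\overline{L_p}\setminus L_p$. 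A clean fix is to argue instead that the saturated set $A:=\bigcup_{\xi\in\Xi}\eta_\xi(L_p)$ is both open (trivial holonomy plus the normal exponential filling a tubular neighborhood) and closed (a limit argument using equifocality) in the connected manifold $M$, hence equals $M$.
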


\begin{cor}[\cite{AlexToeben2}]
\label{singular-points-isolated}
Let $\gamma$ be a geodesic orthogonal to a leaf $L$ and tangent to the stratum $\Sigma_{L}$.  Then the points of $\gamma$ that do not belong to $\Sigma_{L}$  are isolated on $\gamma.$ 
\end{cor}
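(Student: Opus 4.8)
The plan is to translate the statement ``$\gamma(t)\notin\Sigma_{L}$'' into ``$\gamma(t)$ is a focal point of the leaf $L$ along $\gamma$'', and then to use the classical fact that the focal points of a submanifold along a geodesic are isolated. So, first I would fix the set-up: let $p:=\gamma(0)$ be the point at which $\gamma$ meets $L$, put $v:=\gamma'(0)$ and $k:=\dim L$, so that $v\in\nu_{p}L\cap T_{p}\Sigma_{L}$ by hypothesis and $\gamma(t)=\exp_{p}(tv)$. Since $\F|_{\Sigma_{L}}$ is a regular Riemannian foliation and $v$ is normal to $L$ \emph{inside} $\Sigma_{L}$, I can extend $v$ to a normal foliated vector field $\xi$ along a neighbourhood $U$ of $p$ in $L$, shrinking $U$ so that the conclusion of Theorem~\ref{thm-s.r.f.-equifocal} holds for $U$. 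For every $t$ the field $t\xi$ is again normal and foliated along $U$, and $\eta_{t\xi}(p)=\exp_{p}(tv)=\gamma(t)$, so Theorem~\ref{thm-s.r.f.-equifocal} applies: $\eta_{t\xi}$ has constant rank on $U$ and $\eta_{t\xi}(U)$ is open in the leaf $L_{\gamma(t)}$. Being the open image of a constant-rank map from the $k$-manifold $U$, this forces $\dim L_{\gamma(t)}\le k$, with equality precisely when $\eta_{t\xi}$ is an immersion at $p$; recalling $\Sigma_{L}=\{x:\dim L_{x}=k\}$, we obtain
\[ \gamma(t)\notin\Sigma_{L}\quad\Longleftrightarrow\quad d(\eta_{t\xi})_{p}\ \text{is not injective}. \]

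The next step identifies $d(\eta_{t\xi})_{p}$ with Jacobi data along $\gamma$. For $w\in T_{p}L$ the geodesic variation $s\mapsto\exp_{c(s)}(t\,\xi(c(s)))$ (with $c(0)=p$, $c'(0)=w$) has variation field the Jacobi field $J_{w}$ along $\gamma$ determined by $J_{w}(0)=w$ and $J_{w}'(0)=\nabla_{w}\xi$, and $d(\eta_{t\xi})_{p}(w)=J_{w}(t)$. Hence $\gamma(t)\notin\Sigma_{L}$ iff $J_{w}(t)=0$ for some $w\neq 0$. A short computation with the second fundamental form of $L$ (differentiating $\langle\xi,y\rangle\equiv 0$) shows that $J_{w}'(0)+S_{v}(J_{w}(0))\perp T_{p}L$, where $S_{v}$ is the shape operator of $L$ in the direction $v$; that is, each $J_{w}$ is an $L$-Jacobi field along $\gamma$, nonzero whenever $w\neq 0$ since $J_{w}(0)=w$. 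Therefore every $t$ with $\gamma(t)\notin\Sigma_{L}$ is a focal instant of $L$ along $\gamma$. These instants form a discrete set: the $L$-Jacobi fields along $\gamma$ are a Lagrangian family (the Wronskian of any two vanishes, by symmetry of $S_{v}$), and a standard normalization-and-limit argument using constancy of the Wronskians rules out accumulation of focal instants. Since $\{t:\gamma(t)\notin\Sigma_{L}\}$ is contained in this discrete set, each of its points is isolated, which is the assertion.

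I expect the genuine obstacle to be the reduction in the first two steps, that is, turning ``leaving $\Sigma_{L}$'' into a focal-point condition: it requires feeding the \emph{scaled} fields $t\xi$ into the equifocality theorem, which is legitimate only because $v$ is tangent to $\Sigma_{L}$ --- precisely what allows $v$ to be extended to a bona fide foliated normal field on the stratum. Without the tangency hypothesis, with mere horizontality, the family $\{\eta_{t\xi}\}$ would not be available and the jumps of $\dim L_{\gamma(t)}$ along $\gamma$ could not be controlled.
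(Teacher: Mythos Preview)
Your argument is correct. The paper itself gives no proof of this corollary: it is simply quoted from \cite{AlexToeben2} as a consequence of the equifocality theorem, so there is nothing in the present text to compare your approach against line by line.

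That said, your route is precisely the one the citation is pointing to. The equifocality theorem (Theorem~\ref{thm-s.r.f.-equifocal}, together with the remark that it holds along singular leaves for foliated fields tangent to the stratum) yields $\dim L_{\gamma(t)}=\operatorname{rank} d(\eta_{t\xi})_{p}$, so $\gamma(t)\notin\Sigma_{L}$ exactly when some $J_{w}$ vanishes at $t$; the symmetry of $S_{v}$ makes the Wronskians $W(J_{w},J_{w'})$ vanish, and the usual isotropic--family argument then forces the vanishing instants to be discrete. Two small points worth making explicit in a write-up: first, the single neighbourhood $U$ supplied by Theorem~\ref{thm-s.r.f.-equifocal} works for \emph{all} rescalings $t\xi$ simultaneously (this is how the theorem is stated), so no $t$-dependent shrinking is needed; second, the family $\{J_{w}\}$ is only $k$-dimensional and hence isotropic rather than Lagrangian, but the standard discreteness argument (split the family at a bad instant into those vanishing and those not, use $J_{i}'(t_{0})\perp J_{j}(t_{0})$ from the Wronskian identity, then Taylor-expand) requires only isotropy. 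With those clarifications your proof is complete and is the expected one.
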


\begin{rem}
In \cite{AlexToeben2} we only proved the equifocality of regular leaves. Nevertheless, as can be easily checked,  the  same prove is valid for singular leaves, if one consider  foliated vector fields tangent to the stratum. 
\end{rem}

 
The equifocality of $\F$  and Proposition \ref{homothetic-lemma-stratum} imply  the next result.

\begin{prop}
\label{cor-transnormal-system}
Let $\F$ be a s.r.f, $\Sigma$ the minimal stratum of $\F$ and $q\in \Sigma$. For each $x\in P_{q},$ set $T_{x}:=\exp_{x}(\nu (\Sigma)\cap B_{\epsilon}(0))$. 
 Then  there exists a neighborhood $U$ of $q$ in $\Sigma$ such that if a plaque $P$ of $\F$ meets $T_{q}$, we have 
 $\emptyset\neq P\cap S_{x}\subset T_{x}$ for all $x\in U\cap P_{q}.$ 
  \end{prop}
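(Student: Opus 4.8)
\emph{Set-up.} I would first fix all the data compatibly. Choose $\epsilon>0$ and a tubular neighbourhood $\tub(P_{q})$ small enough that $\tub(P_{q})\subset\tub(\Sigma)$, that the nearest–point projections $\pi:=\pi_{P_{q}}:\tub(P_{q})\to P_{q}$ and $\pi_{\Sigma}:\tub(\Sigma)\to\Sigma$ are well defined and have unique nearest points, and that the reductions made in the proof of Proposition \ref{lemma-almost-product} are in force. Note that $S_{x}=\pi^{-1}(x)$ and $T_{x}=\pi_{\Sigma}^{-1}(x)$ for $x\in P_{q}$, and that since $P_{q}\subset L_{q}\subset\Sigma$ we have $\nu_{x}\Sigma\subset\nu_{x}P_{q}$, hence $T_{x}\subset S_{x}$. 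We may also assume that the plaque $P$ in the statement is the connected component of its leaf intersected with $\tub(P_{q})$ through a point of $P\cap T_{q}$, so $P\subset\tub(P_{q})$. Finally I would record, from Molino's homothety lemma for a plaque (Proposition \ref{homothetic-lemma}) and for a stratum (Proposition \ref{homothetic-lemma-stratum}), that each plaque of $\F$ inside $\tub(P_{q})$ is contained in a single distance cylinder about $P_{q}$ and in a single distance cylinder about $\Sigma$; equivalently, the functions $d(\,\cdot\,,P_{q})$ and $d(\,\cdot\,,\Sigma)$ are constant on each plaque of $\F$ lying in the tube.

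\emph{Nonemptiness of $P\cap S_{x}$.} I would invoke the auxiliary regular foliation $\F^{2}$ constructed in the proof of Proposition \ref{lemma-almost-product}: its plaques satisfy $P^{2}\subset P$ for the plaque $P$ of $\F$ through the same point, each $P^{2}$ meets each slice at exactly one point, and $\pi|_{P^{2}}:P^{2}\to P_{q}$ is a diffeomorphism. Given a plaque $P$ of $\F$ in $\tub(P_{q})$ and a point $z\in P$, the plaque $P^{2}$ of $\F^{2}$ through $z$ lies in $P$, so $\pi(P)\supseteq\pi(P^{2})=P_{q}$, and the reverse inclusion is trivial. Hence $\pi(P)=P_{q}$, \emph{uniformly in $P$}, and therefore $\emptyset\neq(\pi|_{P})^{-1}(x)=P\cap S_{x}$ for every $x\in P_{q}$. (Alternatively, the same surjectivity follows from the equifocality of $\F$, Corollary \ref{cor}, which reconstructs $P$ as a parallel submanifold of a regular plaque.)

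\emph{The inclusion $P\cap S_{x}\subset T_{x}$.} Let $y\in P\cap T_{q}$ and $z\in P\cap S_{x}$, and put $v:=\exp_{q}^{-1}(y)\in\nu_{q}\Sigma$, $w:=\exp_{x}^{-1}(z)\in\nu_{x}P_{q}$. Since $v\in\nu_{q}\Sigma\subset\nu_{q}P_{q}$, the geodesic $t\mapsto\exp_{q}(tv)$ is orthogonal to $\Sigma$, a fortiori to $P_{q}$, at $q$, so (in a small tube) it is the minimal geodesic from $y$ to both $\Sigma$ and $P_{q}$; thus $d(y,\Sigma)=d(y,P_{q})=|v|$. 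Likewise $d(z,P_{q})=d(z,x)=|w|$. Because $y$ and $z$ lie in the same plaque $P$, and $d(\,\cdot\,,P_{q})$ and $d(\,\cdot\,,\Sigma)$ are constant on $P$, we get $|w|=d(z,P_{q})=d(y,P_{q})=|v|$ and $d(z,\Sigma)=d(y,\Sigma)=|v|$. Consequently $d(z,x)=|w|=|v|=d(z,\Sigma)$ with $x\in\Sigma$, so $x$ realizes the distance from $z$ to $\Sigma$; by uniqueness of nearest points this forces $x=\pi_{\Sigma}(z)$ and shows that $t\mapsto\exp_{x}(tw)$ is the minimal geodesic from $\Sigma$ to $z$, hence $w\in\nu_{x}\Sigma$. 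Since $|w|=|v|<\epsilon$, we conclude $z=\exp_{x}(w)\in\exp_{x}(\nu_{x}\Sigma\cap B_{\epsilon})=T_{x}$.

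\emph{Conclusion and the delicate point.} Combining the last two paragraphs, $\emptyset\neq P\cap S_{x}\subset T_{x}$ for every $x\in P_{q}$; one then takes $U$ to be any neighbourhood of $q$ in $\Sigma$ small enough to lie inside the foliated chart used above, so that $U\cap P_{q}\subset P_{q}$ and the argument applies verbatim. The proof has no serious obstacle once the distance identities of the third paragraph are available; the one thing that must be handled with care is precisely the set-up of the first paragraph — fixing $\epsilon$, the radius of $\tub(P_{q})$ and of $\tub(\Sigma)$, and the reductions entering the construction of $\F^{2}$, all compatibly — so that nearest–point projections are unique and the cylinder structures about $P_{q}$ and about $\Sigma$ hold simultaneously on the region where the plaque $P$ lives. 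That bookkeeping is the real content; the geometric argument itself is short.
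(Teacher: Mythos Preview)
Your argument is correct and is genuinely different from the paper's. The paper proceeds by parallel-transporting the radial geodesic $\gamma\subset T_{q}$ along a curve $\beta\subset P$ via the Bott connection, then shows by a length-comparison (transport back and forth) that the transported geodesic $\|_{\beta}\gamma$ is orthogonal to $\Sigma$, and finally invokes equifocality to conclude that its endpoint is $x$, so $\beta(1)\in T_{x}$. Your route replaces all of this by a single metric observation: since both $d(\,\cdot\,,P_{q})$ and $d(\,\cdot\,,\Sigma)$ are constant along $P$ (Propositions \ref{homothetic-lemma} and \ref{homothetic-lemma-stratum}), and since these two distances coincide at the reference point $y\in T_{q}$, they coincide at $z$ as well; uniqueness of the nearest point on $\Sigma$ then forces $x=\pi_{\Sigma}(z)$ and hence $w\in\nu_{x}\Sigma$. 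This is shorter and avoids both the Bott connection and the equifocality theorem, at the price of requiring the tubes about $P_{q}$ and about (a relatively compact piece of) $\Sigma$ to be set up compatibly so that nearest points are unique---exactly the bookkeeping you flag as the ``delicate point.'' The paper's transport argument, by contrast, is more robust to such choices and exhibits directly which geodesic in $T_{x}$ hits $\beta(1)$.
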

  \begin{proof}
  Let $\beta$ be a curve in $P$ such that $\beta(0)\in T_{q}$ and $\beta(1)\in S_{x}$. Let $\gamma:[0,1]\rightarrow T_{q}$ be the minimal geodesic such that  $q=\gamma(1)$ to $\gamma(0)=\beta(0)$. It follows from Proposition \ref{homothetic-lemma} that $\gamma|_{[0,1)}$ is contained in the same stratum of $\beta(0)$. Therefore we can transport the horizontal geodesic $\gamma$ with respect to the Bott connection along $\beta$. Let $||_{\beta}\gamma$ be this transported geodesic and note that $||_{\beta}\gamma(t)\in L_{\gamma(t)}$ for $t\in[0,1]$.

First we will prove that $||_{\beta}\gamma$ is orthogonal to $\Sigma$. Assume that this is not the case. Since $x$ is near to $q$ we have that 
$||_{\beta}\gamma$ is not tangent to $\Sigma$. Then, since $||_{\beta}\gamma$ is not orthogonal to $\Sigma$, there exists a piecewise horizontal geodesic  $\alpha$ that joins $\Sigma$ to $\beta(1)$ so that $\alpha$ coincides with $||_{\beta}\gamma|_{[0,\delta]}$ and $l(\alpha)<l(||_{\beta}\gamma)$, i.e., the length of $\alpha$ is lower then the length of $||_{\beta}\gamma$. Note that $\alpha$ also belongs to the stratum that contains $\beta(1)$ and hence we can transport $\alpha$ with respect to the Bott connection along $\beta^{-1}$. 
We conclude that $l(||_{\beta^{-1}}\alpha)<l(\gamma)$, which contradicts the fact that $\gamma$ is orthogonal to $\Sigma$. Therefore $||_{\beta}\gamma$ is orthogonal to $\Sigma$.

On the other hand, by the equifocality of $\F$, we have  that $||_{\beta}\gamma(1)=x$.
 
These two facts together imply that $||_{\beta}\gamma\subset T_{x}$ and in particular that $\beta(1)\in T_{x}$.

  \end{proof}

We also need the next result due to Molino \cite[Proposition 6.4]{Molino}, which we reformulate as follows.

\begin{prop}
\label{prop-metric-stratum}
Let $\F$ be a s.r.f. with respect to a metric $g$. Let $\tilde{g}$ be another metric with following property: $(\Sigma,\F|_{\Sigma})$ is a Riemannian foliation with respect to $\tilde{g}$, for each stratum $\Sigma$. Then $\F$ is a s.r.f with respect to $\tilde{g}.$  
\end{prop}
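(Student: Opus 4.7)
The plan is to check the two defining conditions of a singular Riemannian foliation for $(M,\F,\tilde{g})$. Condition (1), the transitive action of $\singularF$ on leaves, is immediate since $\F$ and $\singularF$ are purely smooth objects, independent of the choice of metric. The substance of the proposition lies in verifying condition (2): every $\tilde{g}$-geodesic $\gamma$ with $\dot\gamma(0)$ $\tilde{g}$-orthogonal to $L_{\gamma(0)}$ must remain $\tilde{g}$-orthogonal to every leaf it meets.

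For each $X\in\singularF$, define $\phi_X(t):=\tilde{g}(\dot\gamma(t),X(\gamma(t)))$. Using the geodesic equation $\nabla^{\tilde{g}}_{\dot\gamma}\dot\gamma=0$ and the Koszul identity $(L_X\tilde{g})(Y,Y)=2\tilde{g}(\nabla_YX,Y)$, one finds
\[
\phi_X'(t)=\tfrac{1}{2}(L_X\tilde{g})(\dot\gamma(t),\dot\gamma(t)).
\]
Because every leaf-tangent vector is of the form $X(\gamma(t))$ for some $X\in\singularF$ (by condition (1)), condition (2) is equivalent to $\phi_X\equiv 0$ for every $X\in\singularF$. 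Since $\phi_X(0)=0$, the key input will be the pointwise Killing-type identity
\[
(L_X\tilde{g})(v,v)=0\qquad\forall\, p\in M,\; X\in\singularF,\; v\in\nu^{\tilde{g}}_pL_p.
\]
Granted this identity, I conclude $\phi_X\equiv 0$ by a Gronwall-type argument applied to $\Psi(t):=\sum_i\phi_{X_i}(t)^2$ for a local generating set $\{X_i\}$ of $\singularF$, using the splitting $\dot\gamma(t)=h(t)+v(t)$ with $h(t)\in\nu^{\tilde{g}}_{\gamma(t)}L_{\gamma(t)}$ and $v(t)\in T_{\gamma(t)}L_{\gamma(t)}$, the vanishing $(L_X\tilde{g})(h,h)=0$, and the estimate $|v(t)|\leq c\sqrt{\Psi(t)}$ obtained via the Gram matrix of $\{X_i\}$.

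To prove the pointwise identity, fix $p\in M$ and let $\Sigma$ be its stratum. Since any $X\in\singularF$ preserves each leaf, and hence each stratum, under its flow, one has $X|_\Sigma\in\singularFsigma$. Decompose $v=v_\Sigma+v_\perp$ according to the $\tilde{g}$-orthogonal splitting $T_pM=T_p\Sigma\oplus\nu^{\tilde{g}}_p\Sigma$, so that $v_\Sigma\in T_p\Sigma\cap\nu^{\tilde{g}}_pL_p$. The hypothesis that $(\Sigma,\F|_\Sigma)$ is Riemannian with respect to $\tilde{g}|_\Sigma$, combined with Proposition~\ref{prop-equivalencia-df-FR}(c), immediately gives $(L_X\tilde{g})(v_\Sigma,v_\Sigma)=0$. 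For the contributions involving $v_\perp$, I plan to pass to the limit from regular points $p_n\to p$ along $\tilde{g}$-geodesic segments normal to $\Sigma$, at which $\F$ is a regular Riemannian foliation with respect to $\tilde{g}$ (Proposition~\ref{prop-equivalencia-df-FR}(c) applied to the open regular stratum $\Sigma_0$), so the identity already holds; continuity of $\nu^{\tilde{g}}\Sigma$ along $\Sigma$ then transfers the vanishing to $p$. The main obstacle is the dimension jump of leaves between strata, which prevents vectors in $\nu^{\tilde{g}}_pL_p\cap T_p\Sigma$ from being approximated by normal vectors at regular leaves; this is precisely what forces the decomposition $v=v_\Sigma+v_\perp$ and the use of two different ingredients, the stratum hypothesis for the intrinsic $v_\Sigma$ component and the density of $\Sigma_0$ for the transverse $v_\perp$ component.
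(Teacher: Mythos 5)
Your reduction to the pointwise identity $(L_X\tilde g)(v,v)=0$ for $v\in\nu^{\tilde g}_pL_p$ is a reasonable plan, and the stratum hypothesis does handle the $v_\Sigma$ piece, but the argument for the $v_\perp$ piece is circular. To transfer the vanishing from regular points $p_n\to p$ you need vectors $w_n\in\nu^{\tilde g}_{p_n}L_{p_n}$ with $w_n\to v_\perp$, and producing such $w_n$ amounts to knowing that a $\tilde g$-geodesic issuing from $p$ in the normal direction $v_\perp$ stays $\tilde g$-orthogonal to the regular leaves it meets --- which is precisely the conclusion you are trying to reach. ``Continuity of $\nu^{\tilde g}\Sigma$'' is a statement along $\Sigma$ and says nothing about the behaviour of $\nu^{\tilde g}L_{p_n}$ at regular points $p_n\notin\Sigma$. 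The cross term $(L_X\tilde g)(v_\Sigma,v_\perp)$ is also never handled, and it cannot be recovered by polarization from the regular-point identity because $v_\Sigma$ is \emph{not} a limit of $\tilde g$-normal vectors at regular points: those directions are swallowed by the higher-dimensional leaves as $p_n\to p$.

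The paper sidesteps this circularity with two ingredients your proposal never invokes. First, an induction over strata: the regular stratum is the base case (being open, the hypothesis already gives a Riemannian foliation of an open subset of $M$ with respect to $\tilde g$), and one descends to $\Sigma_r$ assuming the result on $\bigcup_{s>r}\Sigma_s$. Second, and crucially, the s.r.f.\ structure with respect to the \emph{original} metric $g$ is used as scaffolding: in Claim~1 of the paper's proof the approximating $\tilde g$-geodesics $\tilde\gamma_t$ are not obtained by guessing $\tilde g$-normal vectors to nearby regular leaves, but by taking a $g$-geodesic $\gamma$ from $q$, forming the $g$-distance cylinders $V_t=T_{\gamma(t)}\partial\tub(P_q)$, using the inclusion $T_{\gamma(t)}P_{\gamma(t)}\subset V_t$ (which holds because $\F$ is already s.r.f.\ with respect to $g$) to get a $\tilde g$-orthogonal initial vector to the leaf at $\gamma(t)$ for free, and only then applying the inductive hypothesis and passing to the limit. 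That your argument never uses the $g$-s.r.f.\ hypothesis --- the one genuinely nontrivial input --- is a strong symptom that a step is missing. A secondary concern, even granting the pointwise identity, is the Gronwall step: the bound $|v(t)|\leq c\sqrt{\Psi(t)}$ is controlled by the inverse of the Gram matrix of a local generating set of $\singularF$, which degenerates as $\gamma(t)$ approaches a more singular stratum, so $c$ cannot be taken uniform along the geodesic without an additional argument about the rate of degeneration.
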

\begin{proof}

It suffices to prove the result in a neighborhood of a plaque $P_{q}\subset \Sigma_{r}$. Note that the result is already true for the regular stratum. By induction, we can assume that it is also true for $\cup _{s>r}\Sigma_{s}$.

Let $\tilde{\gamma}$ be a segment of geodesic (with respect to $\tilde{g}$) that is orthogonal to the plaque $P_{q}$ at $\tilde{\gamma}(0)=q$. 

First we consider the case when $\tilde{\gamma}'(0)$ is not tangent to $\Sigma_{r}.$

\claim 1: \emph{Set $U=\tub(P_{q})\cap (\cup _{s>r}\Sigma_{s}).$  Then $\tilde{\gamma}\cap U$ is orthogonal to the leaves that it meets.}

In order to prove Claim 1 set $\tilde{v}_{0}:=\tilde{\gamma}'(0).$
Let $V_{0}$ be the hyperplane orthogonal to $\tilde{v}_{0}$ relatively to $\tilde{g}.$ Note that $T_{q}P\subset V_{0}$. Let $v_{0}$ be a vector orthogonal to $V_{0}$ relatively to $g$ and $\gamma_{0}$ the geodesic (relatively to $g$) with $\gamma'(0)=v_{0}$.  

At first assume that $v_{0}$ is not tangent to $\Sigma_{r}.$ 

Set $V_{t}:=T_{\gamma(t)}\partial \tub (P_{q})$ where $\partial \tub(P_{q})$ is a geometric cylinder of axis $P_{\gamma(q)}$ with respect to the original metric $g$. Since $\F$ is a s.r.f (with respect to $g$) we have $P_{\gamma(t)}\subset V_{t}$. Now, for each $t$ consider a segment of geodesic $\tilde{\gamma}_{t}$ (with respect to $\tilde{g}$) such that $\tilde{\gamma}_{t}(0)=\gamma(t)$ and $\tilde{\gamma}_{t}'(0)$ is orthogonal to $V_{t}$ relatively to $\tilde{g}$. Then each geodesic $\tilde{\gamma}_{t}$ is orthogonal to the leaves of $\F$ relatively to $\tilde{g}$, because 
 $\tilde{\gamma}_{t}\subset U,$  $P_{\gamma(t)}\subset V_{t}$ and $\F\cap U$ is a s.r.f. Since $\tilde{\gamma}_{t}$ converges to $\tilde{\gamma}$, we conclude that $\tilde{\gamma}$ is orthogonal to the leaves of $\F$ (relatively to $\tilde{g}$). 

Now assume that $v_{0}$ is tangent to $\Sigma_{r}.$ 

Consider a sequence of vectors $\{v_{i}\}$ that converges to $v_{0}$ so that each $v_{i}$ is orthogonal to $P_{q}$ (relatively to $g$) and $v_{i}$ is not tangent to $\Sigma_{r}$.  Define $V^{i}$ the hyperplane orthogonal to $v_{i}$ (relatively to $g$). Since  $V^{i}$ converges to $V_{0}$ we can define a sequence of vectors $\{\tilde{v}_{i}\}$  orthogonal to $V^{i}$ (relatively to $\tilde{g}$) that  converges to $\tilde{v}_{0}$. Let $\tilde{\gamma}_{i}$ be the geodesic (with respect to $\tilde{g}$) such that $\tilde{\gamma}_{i}'(0)=\tilde{v}_{i}$. It follows from the above discussion that each geodesic $\tilde{\gamma}_{i}$ is orthogonal to the leaves of  $\F$ (with respect to $\tilde{g}$). Since $\{\tilde{\gamma}_{i}\}$ converges to $\tilde{\gamma}$ we infer that $\tilde{\gamma}$ is orthogonal to the leaves of $\F$ (with respect to $\tilde{g}$) and this conclude the proof of Claim 1.

\claim 2: \emph{ The segment of geodesic $\tilde{\gamma}$ meets $\Sigma_{r}$ only at $\tilde{\gamma}(0).$}


In order to prove Claim 2 assume that $\tilde{\gamma}(t)\in U$ for $0<t<1$.  
First note that $P_{\tilde{\gamma}(t)}$ is contained in an open set of a geometric cylinder of axis $P_{q}.$ One can see  this using the fact that Claim 1 is valid for each geodesic perpendicular to $P_{q}$ and transversal to $\Sigma_{r}$. Then note that $P_{\tilde{\gamma}(t)}$ is contained in an open set of a geometric cylinder of axis $P_{\tilde{\gamma}(1)}.$ This follows from the equifocality of $\F\cap U$ and from the fact that $P_{\tilde{\gamma}(t)}$ converges to $P_{\tilde{\gamma}(1)}.$ Now by the same argument used in the proof of Proposition \ref{homothetic-lemma} we conclude that $P_{\tilde{\gamma}(1)}$ is homothetic to $P_{\tilde{\gamma}(t)}$ and hence has the same dimension of $P_{\tilde{\gamma}(t)}$. Therefore $P_{\tilde{\gamma}(1)}$ is not contained in $\Sigma_{r}$ and in particular $\tilde{\gamma}(1)$ does not belong to $\Sigma_{r}$.



Finally we consider the case when $\tilde{\gamma}'(0)$ is tangent to $\Sigma_{r}.$

\claim 3:\emph{ If $\tilde{\gamma}$ is a geodesic orthogonal to $P_{q}$ and tangent to $\Sigma_{r}$, then $\tilde{\gamma}\subset \Sigma_{r}.$ In particular, since $(\F,\Sigma_{r})$ is a Riemannian foliation, $\tilde{\gamma}$ is orthogonal to the plaques that it meets.}

Set $B=\Sigma_{r}\cap\exp_{q}(\nu P_{q})\cap \tub(P_{q})$. In order to prove  Claim 3 we will prove that $B=\exp_{q}(T_{q}\Sigma_{r}\cap\nu_{q} P_{q})\cap\tub(P_{q})$. Assume that $B$ is not contained in 
$\tub(P_{q}) \cap \exp_{q}(T_{q}\Sigma_{r}\cap\nu_{q} P_{q})$. Then  there exists a point $x\in\Sigma_{r}$ and a geodesic $\tilde{\gamma}$ orthogonal to $P_{q}$ joining $x$ to $q$ such that $\tilde{\gamma}$ is not tangent to $\Sigma_{r}.$
 This contradicts Claim 2. Therefore the submanifold $B$ is an open set of the submanifold 
$\exp_{q}(T_{q}\Sigma_{r}\cap\nu_{q} P_{q})\cap\tub(P_{q})$. It is easy to check that $B$ is closed in  $\exp_{q}(T_{q}\Sigma_{r}\cap\nu_{q} P_{q})\cap\tub(P_{q})$ and hence  $B= \exp_{q}(T_{q}\Sigma_{r}\cap\nu_{q} P_{q})\cap\tub(P_{q}).$

\end{proof}

One can adapt the proof of the above proposition to get the next technical result. 
\begin{prop}
\label{rem-prop-metric-stratum}
Let $\F$ be a singular foliation on a Riemannian manifold $(M,g)$ with the following properties:
\begin{enumerate}
\item[(a)] Each stratum is a submanifold.
\item[(b)] The sequence of plaques $\{P_{x_{n}}\}$ converges to $P_{x}$ when the sequence $\{x_{n}\}$ converges to $x$.
\item[(c)] The restriction of $\F$ to each stratum is a Riemannian foliation. 
\item[(d)] For each vector $v$ that is not tangent to a stratum $\Sigma$ there exists a curve $\beta:[0,\epsilon)\rightarrow M-\Sigma$ and a distribution $t\rightarrow V_{t}$ along $\beta$  such that
 \begin{enumerate}
 \item[(d.1)]$T_{\beta(t)}P_{\beta(t)}\subset V_{t},$ 
  \item[(d.2)]$v$ is orthogonal to $V_{0}.$ 
\end{enumerate}
\end{enumerate}
Then $\F$ is a s.r.f with respect to $g$.

\end{prop}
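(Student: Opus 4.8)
The plan is to repeat the proof of Proposition~\ref{prop-metric-stratum} almost verbatim, letting hypotheses (a)--(d) take over the roles played there by the $g$-s.r.f.\ structure of $\F$. Recall that to see that $\F$ is a s.r.f.\ with respect to $g$ it suffices to show that every geodesic which is orthogonal at one point to a plaque of $\F$ is orthogonal to all the plaques that it meets, and that this is a statement local near the plaque in question. I would argue by induction on the strata. For the regular stratum there is nothing to prove: there $\F$ is a regular foliation, Riemannian by (c), so its leaves are locally equidistant and geodesics orthogonal to a leaf are horizontal. Assume then that $\F$ restricted to the open set $\bigcup_{s>r}\Sigma_{s}$ is already a s.r.f.; this is legitimate, since each of the properties (a)--(d) passes to that open subset --- in particular a vector based at a point of $\Sigma_{r}$ only meets strata of dimension $\geq r$ in a small neighborhood, and the curve $\beta$ furnished by (d) avoids $\Sigma_{r}$, so for small parameter it runs inside $\bigcup_{s>r}\Sigma_{s}$. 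It then remains to treat a plaque $P_{q}\subset\Sigma_{r}$ and a $g$-geodesic $\gamma$ with $\gamma(0)=q$ and $\gamma'(0)$ orthogonal to $P_{q}$.

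Suppose first that $\gamma'(0)$ is not tangent to $\Sigma_{r}$, and reproduce Claims~1 and~2 of the proof of Proposition~\ref{prop-metric-stratum}. In Claim~1 the family $t\mapsto T_{\gamma(t)}\partial\tub(P_{q})$ of tangent spaces of geometric cylinders --- which carried the nearby plaques and to which the approximating geodesics were made orthogonal --- is now provided directly by hypothesis (d): it yields a curve $\beta$ and a distribution $t\mapsto V_{t}$ along it with $T_{\beta(t)}P_{\beta(t)}\subset V_{t}$ and $\gamma'(0)\perp V_{0}$. For $t>0$ the point $\beta(t)$ lies in $\bigcup_{s>r}\Sigma_{s}$, where $\F$ is a s.r.f.\ by the inductive hypothesis, so any geodesic based at $\beta(t)$ and orthogonal to $V_{t}\supset T_{\beta(t)}P_{\beta(t)}$ is orthogonal to the plaques it meets; letting $t\to 0$ and using the plaque convergence (b) one gets that $\gamma$ is orthogonal to the leaves it meets. (As in the original proof there is a sub-case, when the orthogonal direction chosen along $\beta$ happens to be tangent to $\Sigma_{r}$, treated by a further approximation by directions transversal to $\Sigma_{r}$.) Claim~2 --- that $\gamma$ leaves $\Sigma_{r}$ at once --- then goes through unchanged: $P_{\gamma(t)}$ lies in a geometric cylinder about $P_{q}$ by Claim~1, it lies in a cylinder about $P_{\gamma(1)}$ by the equifocality of $\F$ restricted to $\bigcup_{s>r}\Sigma_{s}$ (available from the inductive hypothesis via Theorem~\ref{thm-s.r.f.-equifocal}) together with (b), and the homothety argument of Proposition~\ref{homothetic-lemma} forces $P_{\gamma(1)}$ to have the same dimension as $P_{\gamma(t)}$, hence to lie outside $\Sigma_{r}$.

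Suppose now that $\gamma'(0)$ is tangent to $\Sigma_{r}$, and reproduce Claim~3: using Claim~2 one checks that $\Sigma_{r}\cap\exp_{q}(\nu P_{q})\cap\tub(P_{q})$ coincides with $\exp_{q}\bigl(T_{q}\Sigma_{r}\cap\nu_{q}P_{q}\bigr)\cap\tub(P_{q})$, so that $\gamma\subset\Sigma_{r}$; then hypothesis (c), that $\F|_{\Sigma_{r}}$ is a Riemannian foliation, gives that $\gamma$ is orthogonal to the plaques of $\F$ it meets inside $\Sigma_{r}$. Combining the two cases with the inductive hypothesis closes the induction.

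The step I expect to require the most care is the limiting argument in the adapted Claim~1: one must make sure that, for the given direction $\gamma'(0)$, the curve $\beta$ and the distribution $V_{t}$ supplied by (d) fit together closely enough that the nearby geodesics orthogonal to $V_{t}$ really do converge to $\gamma$, and that those geodesics stay, for $t$ small, inside $\bigcup_{s>r}\Sigma_{s}$ where the inductive hypothesis is in force (this last point is automatic once one observes that such a geodesic is orthogonal to $P_{\beta(t)}$ and that $\beta(t)$ is close to $q$). Once this bookkeeping is arranged, the remainder is literally the proof of Proposition~\ref{prop-metric-stratum}.
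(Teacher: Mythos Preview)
Your proposal is correct and follows exactly the approach the paper indicates: the paper gives no separate proof of this proposition but simply states that ``one can adapt the proof of the above proposition to get the next technical result,'' and your outline is precisely that adaptation, with hypothesis~(d) supplying the curve $\beta$ and the distribution $V_{t}$ that in Proposition~\ref{prop-metric-stratum} came from the geometric cylinders of the auxiliary metric. Your closing caveat about the convergence of the approximating geodesics is well placed, but no finer argument is expected here than in the original proof.
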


We conclude this section constructing a metric $g^{0}$ on a neighborhood of $\Sigma$ with properties similar to the properties of the metric defined in Proposition \ref{lemma-metric-in-S}.

\begin{lemma}
\label{prop-flat-metric}
Let $\F$ be a s.r.f on a complete Riemannian manifold $M$ with metric $g$ and 
$\Sigma$ the minimal stratum (with leaves of dimension $k_{0}$).
Then, for each $q\in\Sigma$ there exists a neighborhood $U_{\alpha}$ of $q$ in $\Sigma$
and metric  $g_{\alpha}^{0}$ on  $\tub_{r}( U_{\alpha})$ such that:
\begin{enumerate}
\item[(a)] $\F$ restrict to  $\tub_{r}(U_{\alpha})$ is a Riemannian foliation with respect to $g_{\alpha}^{0}$.
\item[(b)] There exists a smooth distribution $H$, whose dimension is equal to the codimension of $L_{q}$, such that the normal space  of each plaque of $\F|_{\tub( U_{\alpha})}$ (with respect to $g_{\alpha}^{0}$) is contained in $H.$
\item[(c)] For $x\in U_{\alpha}$, consider the tangent space $ T_{x}S_{x}$ with the metric $g$. Then $d (\exp_{x})_{\xi}:(T_{\xi}T_{x}S_{x})\rightarrow H_{\exp_{x}(\xi)}$ is an isometry, where  $\xi\in\nu_{x} \Sigma.$
\end{enumerate}
\end{lemma}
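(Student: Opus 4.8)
The plan is to carry out, along the minimal stratum $\Sigma$, the combined construction of Propositions \ref{lemma-almost-product} and \ref{lemma-metric-in-S} (given there for a single plaque $P_q$), exploiting the extra structure available at the minimal stratum: $\F|_\Sigma$ is a Riemannian foliation (Proposition \ref{prop-equivalencia-df-FR}), hence near $q$ it is described by a local submersion onto a transversal $W\subset\Sigma$ carrying a well-defined transverse metric, with $\dim W=\dim\Sigma-k_0$; and, by Proposition \ref{homothetic-lemma-stratum}, the plaques of $\F$ inside a tube over $\Sigma$ lie in the cylinders of axis $\Sigma$ and the foliation is locally homothety-invariant with respect to $\Sigma$, while Proposition \ref{cor-transnormal-system} controls how those plaques meet the normal slices $\exp_x(\nu_x\Sigma\cap B_r)$. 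Throughout, $U_\alpha$ and $r$ are taken small enough that all of these local statements, and the equifocality neighborhoods of Theorem \ref{thm-s.r.f.-equifocal}, are available.

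For the construction I would proceed in two independent steps. First, build a regular foliation $\F^2\subset\F$ along the lines of Proposition \ref{lemma-almost-product}, using as transversal the slice $S_q=\exp_q(\nu_qL_q\cap B_r)$ of the minimal leaf $L_q$ (which, since $\Sigma$ is the minimal stratum, contains a transversal $W\subset\Sigma$ to $\F|_\Sigma$ through $q$): choose $X_1,\dots,X_{k_0}\in\singularF$ with $\{X_i(q)\}$ a basis of $T_qL_q$ --- their flows preserve $\Sigma$ and every stratum, because the $X_i$ are tangent to the leaves of $\F$ --- and let $\F^2$ be the foliation whose plaques are the orbits $\varphi^1_{t_1}\circ\cdots\circ\varphi^{k_0}_{t_{k_0}}(y)$, $y\in S_q$, $(t_1,\dots,t_{k_0})$ near $0$; after shrinking this is a genuine regular foliation, its $k_0$-dimensional plaques lie in the leaves of $\F$, and on $\Sigma$ it restricts to $\F|_\Sigma$. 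Second, define the distribution $H$ on $\tub_r(U_\alpha)$ by $H_{\exp_x(\xi)}:=d(\exp_x)_\xi(\nu_xL_x)$ for $x\in U_\alpha$ and $\xi\in\nu_x\Sigma\cap B_r$; it is smooth of dimension $\dim M-k_0=\codim L_q$ and transverse to $T\F^2$ (at points of $\Sigma$ one has $T_x\F^2=T_xL_x$ and $H_x=\nu_xL_x$, and transversality persists for small $r$). Now let $g^0_\alpha$ be the metric on $\tub_r(U_\alpha)$ declared by: $H\perp T\F^2$; $g^0_\alpha$ restricted to each plaque of $\F^2$ is the pullback of $g$ under the radial projection $\tub_r(U_\alpha)\to\Sigma$, as in Proposition \ref{lemma-almost-product}; and $g^0_\alpha|_{H}$ is the metric for which every $d(\exp_x)_\xi:(\nu_xL_x,g)\to(H_{\exp_x(\xi)},g^0_\alpha)$ is an isometry, as in Proposition \ref{lemma-metric-in-S}. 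Then (c) holds by this definition of $g^0_\alpha|_H$, and (b) holds because $T\F^2\subset T\F$ forces the $g^0_\alpha$-normal space of any plaque of $\F$ to lie in $(T\F^2)^{\perp}=H$; smoothness of $H$ and $g^0_\alpha$ follows from smoothness of the flows and of the normal exponential map of $\Sigma$.

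The main obstacle is item (a): that $\F$ restricted to $\tub_r(U_\alpha)$ is still a (singular) Riemannian foliation for $g^0_\alpha$. I would deduce it from Proposition \ref{prop-metric-stratum} (or its refinement, Proposition \ref{rem-prop-metric-stratum}): since $\F$ is already a s.r.f. for the original metric $g$, it suffices to check that $\F|_{\Sigma'}$ is a Riemannian foliation with respect to $g^0_\alpha$ for every stratum $\Sigma'$. For the minimal stratum $\Sigma$ this is built into the construction, since $g^0_\alpha$ restricted to $T\Sigma$ splits as the transverse metric of $\F|_\Sigma$ plus a metric along the plaques, so that transverse metric is left unchanged and $\F|_\Sigma$ stays Riemannian by Proposition \ref{prop-equivalencia-df-FR}(c). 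For a higher stratum $\Sigma'$ meeting the tube, one must rerun the argument in the proof of Proposition \ref{prop-metric-stratum}: split the $g^0_\alpha$-geodesics orthogonal to a plaque of $\F|_{\Sigma'}$ into those tangent to $\Sigma'$ (handled because $\F|_{\Sigma'}$ is Riemannian for $g$ and such a geodesic stays inside $\Sigma'$) and those transverse to $\Sigma'$ (handled via the geometric cylinders of axis the plaques of $\F|_{\Sigma'}$, available since $\F$ is a s.r.f. for $g$ and by Proposition \ref{homothetic-lemma-stratum}, together with the convergence of nearby plaques), i.e. one verifies condition (d) of Proposition \ref{rem-prop-metric-stratum}. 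The delicate point is to check that flattening the normal slices and prescribing the distribution $H$ has not destroyed the transverse rigidity of $\F$ along the higher strata; this mirrors the verifications carried out in Propositions \ref{lemma-almost-product}, \ref{lemma-metric-in-S} and \ref{prop-metric-stratum}, and I expect it to go through by the same arguments.
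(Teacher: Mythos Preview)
Your construction of $H$, $\F^{2}$ and $g^{0}_{\alpha}$ is exactly the one the paper uses (your $H_{\exp_{x}(\xi)}=d(\exp_{x})_{\xi}(\nu_{x}L_{x})$ is the paper's $T_{\exp_{x}(\xi)}S_{x}$), and your verification of (b) and (c) is fine. The gap is in your argument for (a) on the higher strata.

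You correctly reduce (a), via Proposition~\ref{prop-metric-stratum}, to showing that $\F|_{\Sigma'}$ is a Riemannian foliation with respect to $g^{0}_{\alpha}$ for every stratum $\Sigma'$. For $\Sigma'=\Sigma$ your argument is fine. For a higher stratum, however, you propose to ``rerun the argument in the proof of Proposition~\ref{prop-metric-stratum}'', splitting $g^{0}_{\alpha}$-geodesics orthogonal to a plaque into those tangent and those transverse to $\Sigma'$. That is the wrong tool: the proof of Proposition~\ref{prop-metric-stratum} \emph{assumes} each $\F|_{\Sigma'}$ is already Riemannian for the new metric and deduces that $\F$ is a s.r.f.; it does not give a way to check the hypothesis itself. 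Moreover, the tangent/transverse dichotomy is vacuous when you are working entirely inside $\Sigma'$, and your parenthetical ``handled because $\F|_{\Sigma'}$ is Riemannian for $g$'' is for the old metric, not for $g^{0}_{\alpha}$. So the claim that $\F|_{\Sigma'}$ is Riemannian for $g^{0}_{\alpha}$ is left unproved.

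The paper closes this gap by a different (and short) route. It invokes Proposition~\ref{cor-transnormal-system}: a plaque $P$ that meets the normal-to-$\Sigma$ slice $T_{q}$ also meets $T_{x}$ for all nearby $x\in P_{q}$, so the slicing by the $T_{x}$ is compatible with $\F$. Consequently, along any plaque $P$ inside a stratum $\Sigma_{P}$, the transverse metric $(g^{0}_{\alpha})_{T}$ coincides with the transverse metric of the flat metric $g_{0}$ built in Proposition~\ref{lemma-metric-in-S} (both flatten the slices of $L_{q}$). Proposition~\ref{lemma-metric-in-S}(b) already says $\F$ is a s.r.f.\ for $g_{0}$, hence $L_{X}(g_{0})_{T}=0$ and therefore $L_{X}(g^{0}_{\alpha})_{T}=0$ along $\Sigma_{P}$. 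By Proposition~\ref{prop-equivalencia-df-FR}(c) this means $\F|_{\Sigma_{P}}$ is Riemannian for $g^{0}_{\alpha}$, and then Proposition~\ref{prop-metric-stratum} finishes. In short: the missing ingredient is precisely the use of Proposition~\ref{cor-transnormal-system} to identify the new transverse metric, stratum by stratum, with one already known to work.
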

\begin{proof}
For $\xi\in\nu_{x}(\Sigma)$  set $H_{\exp_{x}(\xi)}:=T_{\exp_{x}(\xi)}S_{x}$. 
Let $\F^{2}$ be the foliation constructed in the proof of Proposition \ref{lemma-almost-product}. Reducing  $\tub_{r}( U_{\alpha})$ if necessary, note that the plaques $P^{2}_{z}\subset P_{z}$ and each plaque $P^{2}$ cuts each slice  at exactly one point. In particular  
$TP^{2}_{\exp_{x}(\xi)}\oplus H_{\exp_{x}(\xi)}=T_{\exp_{x}(\xi)}M.$ 
On each $TP^{2}_{\exp_{x}(\xi)}$  consider the metric 
$(g^{0}_{\alpha})^{2}:=(\pi|_{TP^{2}_{\exp_{x}(\xi)}})^{*}g$, where $\pi:\tub_{r}( U_{\alpha})\rightarrow \Sigma$ is the radial projection.  Define a metric $(g^{0}_{\alpha})^{1}$ on $H_{\exp_{x}(\xi)}$ such that $\exp_{x}(T_{\xi}T_{x}S_{x})\rightarrow H_{\exp_{x}(\xi)}$ is an isometry. Finally set
$g^{0}_{\alpha}:=(g^{0}_{\alpha})^{1}+(g^{0}_{\alpha})^{2}$, meaning that $\F^{2}$ and the distribution $H$ meet orthogonally.

Let $P$ be a plaque and $\Sigma_{P}$ the stratum that contains $P$. It follows from Proposition \ref{cor-transnormal-system} that the transverse metric restrict to $\Sigma_{P}$ on $P$ coincides with the transverse metric constructed in  Proposition \ref{lemma-metric-in-S}. By Proposition \ref{lemma-metric-in-S} the Lie derivative $L_{X}(g^{0}_{\alpha})_{T}$ is zero. We can use the same argument to conclude that 
the Lie derivate of the transverse metric $(g^{0}_{\alpha})_{T}$ along each other plaque of $\Sigma_{P}$ is also zero and 
 hence, by Proposition \ref{prop-equivalencia-df-FR}, $\F|_{\Sigma}$ is a Riemannian foliation with respect to $g^{0}_{\alpha}$. Therefore, by Proposition \ref{prop-metric-stratum}, $\F$ is a singular Riemannian foliation with respect to $g^{0}_{\alpha}.$

\end{proof}

\begin{prop}
\label{prop-flat-metric-N}
Let $\F$ be a s.r.f on a compact Riemannian manifold $(M,g)$ and  $\Sigma$  the minimal stratum (with leaves of dimension $k_{0}$). 
Then there exists  a neighborhood  $\tub_{r}(\Sigma)$ of $\Sigma$   and a metric  $g^{0}$ on $\tub_{r}(\Sigma)$   such that:
\begin{enumerate}
\item[(a)] $\F$ restrict to  $\tub_{r}(\Sigma)$ is a Riemannian foliation with respect to $g^{0}$.
\item[(b)] There exists a smooth distribution $H$, whose dimension is equal to the codimension of the leaves in $\Sigma$, such that the normal space  of each plaque of $\F|_{\tub_{r}(\Sigma)}$ (with respect to $g^{0}$) is contained in $H.$
\item[(c)] For $q\in \Sigma$, consider the tangent space $ T_{q}S_{q}$ with the metric $g$. Then $d (\exp_{q})_{\xi}:(T_{\xi}T_{q}S_{q})\rightarrow H_{\exp_{q}(\xi)}$ is an isometry, where  $\xi\in\nu_{q} \Sigma.$
\end{enumerate}
\end{prop}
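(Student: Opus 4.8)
The plan is to globalize Lemma~\ref{prop-flat-metric} from a neighborhood $\tub_r(U_\alpha)$ of a point $q\in\Sigma$ to a full tubular neighborhood $\tub_r(\Sigma)$ by a partition of unity argument. First I would fix, by compactness of $\Sigma$, a finite cover $\{U_\alpha\}_{\alpha\in A}$ of $\Sigma$ by open sets as in Lemma~\ref{prop-flat-metric}, together with the local metrics $g^0_\alpha$ on $\tub_r(U_\alpha)$ and the distributions $H^\alpha$ on $\tub_r(U_\alpha)$; after shrinking $r$ we may assume a common radius works for all of them. The key point that makes gluing possible is that the distribution is in fact \emph{canonical}: in the proof of Lemma~\ref{prop-flat-metric} one sets $H_{\exp_x(\xi)}:=T_{\exp_x(\xi)}S_x$, i.e. $H$ is the tangent distribution of the slices $\{S_x\}_{x\in\Sigma}$, which does not depend on $\alpha$. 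Likewise $\F^2$ is constructed from a choice of tangent frame of $P_q$, but the resulting \emph{radial projection} $\pi:\tub_r(\Sigma)\to\Sigma$ and the metric piece $(g^0_\alpha)^2=(\pi|_{TP^2})^*g$ along the $P^2$-direction only see the foliation $\F^2$ up to the identification with $\Sigma$ via $\pi$; I would therefore first argue that one global choice of $\F^2$ (equivalently, a global complement $TP^2$ to $H$ inside $T\F$) can be made over a neighborhood of $\Sigma$, using that $\Sigma$ is a single plaque-type stratum and $\nu\Sigma\to\Sigma$ admits global sections spanning the leaf directions after shrinking.

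With a global distribution $H$ and a global $\F^2$ (hence a global metric $(g^0)^2$ on $T\F^2$ and a global metric $(g^0)^1$ on $H$ defined by requiring $d(\exp_q)_\xi$ to be an isometry, which is itself $\alpha$-independent once $H$ is), I would simply set $g^0:=(g^0)^1\oplus(g^0)^2$ on $\tub_r(\Sigma)$, declaring $\F^2$ and $H$ orthogonal; this is well defined globally and restricts on each $\tub_r(U_\alpha)$ to (something isometric to) $g^0_\alpha$, so properties (b) and (c) are immediate by construction and (a) holds on each $\tub_r(U_\alpha)$ by Lemma~\ref{prop-flat-metric}(a). Since being a Riemannian (resp. singular Riemannian) foliation is a local condition — one checks the transverse metric has vanishing Lie derivative along plaques, as in Proposition~\ref{prop-equivalencia-df-FR}(c), and this is verified on the members of an open cover — property (a) on all of $\tub_r(\Sigma)$ follows. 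Alternatively, and perhaps more robustly, I would patch the $g^0_\alpha$ with a partition of unity $\{\varphi_\alpha\}$ subordinate to $\{\tub_r(U_\alpha)\}$: because the $H^\alpha$ agree (they are all the slice distribution) and the $(g^0_\alpha)^1$ agree on $H$, only the $(g^0_\alpha)^2$-pieces on $T\F^2$ differ, and a convex combination of metrics that all make $\pi|_{P^2}$ an isometric identification with a fixed $g|_\Sigma$ still has that property; one then invokes Proposition~\ref{prop-metric-stratum} (as in the last paragraph of the proof of Lemma~\ref{prop-flat-metric}) to conclude $\F$ is a s.r.f.\ with respect to the patched metric.

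The main obstacle I anticipate is exactly the globality of the auxiliary data: the local construction in Lemma~\ref{prop-flat-metric} depends on a choice of vector fields $X_1,\dots,X_r\in\singularF$ trivializing $TP_q$ near $q$, and different charts produce different foliations $\F^2$, so the metrics $g^0_\alpha$ need not literally agree on overlaps $\tub_r(U_\alpha)\cap\tub_r(U_\beta)$. Resolving this requires showing either (i) that the transverse metric and the distribution $H$ — which \emph{are} intrinsic — carry all the information needed, so that any two $g^0_\alpha,g^0_\beta$ induce the \emph{same} foliation-theoretic data and a convex combination still works; or (ii) that one can choose $\F^2$ globally on a neighborhood of $\Sigma$, which amounts to a triviality/sectioning statement for the leaf tangent bundle restricted to $\Sigma$ that follows from $\Sigma$ being the minimal stratum with leaves of constant dimension $k_0$ and $\F|_\Sigma$ a (regular) Riemannian foliation. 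I would take route (i): verify that property (d) of Proposition~\ref{lemma-almost-product} — that the transverse metric is unchanged — holds uniformly, so that all $g^0_\alpha$ restrict to $\F|_\Sigma$ a \emph{fixed} bundle-like metric near $\Sigma$ (the one induced by $g$), and then the partition-of-unity metric $g^0=\sum_\alpha\varphi_\alpha\, g^0_\alpha$ still makes $\F|_\Sigma$ Riemannian and still has normal spaces of plaques inside $H$; Proposition~\ref{prop-metric-stratum} then finishes the proof exactly as in Lemma~\ref{prop-flat-metric}, and property (c) is inherited because it constrains only the $H$-part, on which all $g^0_\alpha$ coincide.
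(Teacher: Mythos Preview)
Your partition-of-unity approach is exactly the paper's strategy: set $g^{0}=\sum_{\alpha}\phi_{\alpha}g^{0}_{\alpha}$ for a cover $\{U_{\alpha}\}$ coming from Lemma~\ref{prop-flat-metric}. The decisive observation you isolate---that the slice distribution $H$ is canonical and all the $g^{0}_{\alpha}$ agree on $H$, hence the normal spaces of plaques and the transverse data coincide across charts---is precisely the content of the paper's ``Claim~1''. Items (b) and (c) then follow just as you say.

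The only genuine difference is in how item (a) is verified for the glued metric. You propose to deduce that each stratum carries the \emph{same} transverse metric for every $g^{0}_{\alpha}$ (hence for $g^{0}$), and then invoke Proposition~\ref{prop-metric-stratum}; this works and is arguably cleaner. The paper instead proves equidistance of plaques directly: it takes $f(y)=d(y,P_{x})^{2}$, shows that $(\grad f)^{0}_{\alpha}=(\grad f)^{0}_{\beta}$ (using $g^{0}_{\alpha}|_{H}=g^{0}_{\beta}|_{H}$) and hence $(\grad f)^{0}=(\grad f)^{0}_{\alpha}$, so $f$ is transnormal with the same $b$ for $g^{0}$, and then applies Wang's theorem. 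Both routes rest on the same ingredient; the paper's is more hands-on, yours more structural.

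Two small corrections. First, your ``route~(ii)''---building a single global $\F^{2}$---would require the leaf tangent bundle along $\Sigma$ to be trivial, which is not guaranteed; you are right to prefer the partition-of-unity route. Second, the reference to ``property~(d) of Proposition~\ref{lemma-almost-product}'' is off-target: that item concerns the metric $\tilde{g}$, not $g^{0}_{\alpha}$; what you actually need (and what the paper uses) is item~(c) of Lemma~\ref{prop-flat-metric}, which forces all $g^{0}_{\alpha}$ to coincide on $H$. Also, the phrase ``only the $(g^{0}_{\alpha})^{2}$-pieces differ'' is slightly misleading since the complements $T\F^{2}_{\alpha}$ themselves differ; the honest statement is that the $g^{0}_{\alpha}$ differ only \emph{off} $H$, which is all you need.
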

\begin{proof}

Consider a open covering $\{U_{\alpha}\}$  of $\Sigma$ by neighborhoods defined in Lemma \ref{prop-flat-metric} and a partition of unity $\phi_{\alpha}$ subordinate to it.
Set $g^{0}:=\sum \phi_{\alpha} g_{\alpha}^{0}.$

To prove that $\F$ is a s.r.f  it suffices to prove that the plaques of $\F$ are locally equidistant to each other. Let $x\in S_{q}$, $P_{x}$ a plaque of $\F$. For a fixed metric $g_{\alpha}^{0}$, we know that the plaques of $\F$ are contained in the leaves of the foliation by distance-cylinders $\{C\}$ with axis $P_{x}$ with respect to $g_{\alpha}^{0}$.
We will prove that each $C$ is also a distance-cylinder with axis $P_{x}$ with respect to the new metric $g^{0}.$ These facts and  the arbitrary choice of $x$  will imply that the plaques of $\F$ are locally equidistant to each other.

As  we have recalled before,  a smooth function $f:M\rightarrow \mathbb{R}$ is called a \emph{transnormal function} with respect to the metric $g$ if  there exists a $C^{2}(f(M))$ function  $b$  such that $g(\grad f,\grad f)=b\circ f$. 
According to Q-M Wang \cite{Wang} \emph{there are at most two critical level sets of the transnormal function $f$ and each regular level set of $f$ is a distance cylinder over them}. 

Let $f:\tub(P_{x})\rightarrow \mathbb{R}$ be a smooth transnormal function with respect to the metric $g_{\alpha}^{0}$, so that each regular level set $f^{-1}(c)$ is a cylinder $C$ with axis $P_x$,  e.g. $f(y)=d(y,P_x)^2$.

\claim 1 : \begin{enumerate}
\item[(1)]$C$ is a a distance-cylinder with axis $P_{x}$ with respect to each metric $g_{\beta}^{0}$. 
\item[(2)] $(\grad f)_{\alpha}^{0}=(\grad f)_{\beta}^{0},$ where  $(\grad f)_{\alpha}^{0}$ and  $(\grad f)_{\beta}^{0}$ are the gradients with respect to the metrics $g_{\alpha}^{0}$ and $g_{\beta}^{0}$.
\item[(3)] $b:=b_{\alpha}^{0}=b_{\beta}^{0}.$
\end{enumerate}
  In fact, $(\grad f)_{\alpha}^{0}$ and $(\grad f)_{\beta}^{0}$ are orthogonal to $C$ with respect to $g_{\alpha}^{0}$ and $g_{\beta}^{0}$. In particular they are perpendicular to $P_{y}$. We conclude that $(\grad f)_{\alpha}^{0}$ and $(\grad f)_{\beta}^{0}$ belong to $H_{y}$ (see item (b) of  Lemma \ref{prop-flat-metric}). Since $g_{\alpha}^{0}|_{H}=g_{\beta}^{0}|_{H}$ (see item (c) of Lemma \ref{prop-flat-metric}), we conclude that $(\grad f)_{\alpha}^{0}=(\grad f)_{\beta}^{0}$. This implies in particular that $b_{\alpha}^{0}=b_{\beta}^{0}.$

\claim 2 : \begin{enumerate}
\item[(1)] $f$ is a transnormal function with respect to $g^{0}.$
\item[(2)] $b^{0}=b$.
\end{enumerate}
In order to prove Claim 2, first note that $(\grad f)^{0}=(\grad f)_{\alpha}^{0}$. In fact

\begin{eqnarray*}
g^{0}((\grad f)_{\alpha}^{0},V)&=& \sum_{\beta} \phi_{\beta} g_{\beta}^{0}((\grad f)_{\alpha}^{0},V)\\
                             &=& \sum_{\beta} \phi_{\beta} g_{\beta}^{0}((\grad f)_{\beta}^{0},V)\\
                             &=& \sum_{\beta} \phi_{\beta} d f (V)\\
                             &=& d f (V)\\
                             &=& g^{0}((\grad f)^{0},V).
\end{eqnarray*}
 
Now we can see that $f$ is a  transnormal function with respect to $g^{0}$.

\begin{eqnarray*}
g^{0}((\grad f)^{0},(\grad f)^{0})&=&\sum_{\beta} \phi_{\beta} g_{\beta}^{0}((\grad f)_{\beta}^{0},(\grad f)_{\beta}^{0})\\
                              &=& \sum_{\beta} \phi_{\beta} b \circ f\\
                              &=& b \circ f.
                              \end{eqnarray*}

Using a local version of Q-M Wang's theorem, we conclude that each regular level set of $f$ (i.e., $C$ ) is a distance cylinder around $P_{x}$ with respect to the metric ${g}^{0}$.
 

\end{proof}

The conclusion of the above result remains valid if $M$ is a complete Riemannian manifold and the leaves of $\F$ are closed embedded.

\begin{prop}
\label{viz-F-invariante}
Let $\F$ be a s.r.f on a complete Riemannian manifold $(M,g)$ and  $\Sigma$  the minimal stratum (with leaves of dimension $k_{0}$). 
Assume that the leaves of $\F$ are closed embedded.
Then there exists  a $\F$-invariant neighborhood  $V$ of $\Sigma$    and a metric  $g^{0}$ on $V$   such that:
\begin{enumerate}
\item[(a)] $\F$ restrict to  $V$ is a Riemannian foliation with respect to $g^{0}$.
\item[(b)] There exists a smooth distribution $H$, whose dimension is equal to the codimension of the leaves in $\Sigma$, such that the normal space  of each plaque of $\F|_{V}$ (with respect to $g^{0}$) is contained in $H.$
\item[(c)] For $q\in \Sigma$, consider the tangent space $ T_{q}S_{q}$ with the metric $g$. Then $d (\exp_{q})_{\xi}:(T_{\xi}T_{q}S_{q})\rightarrow H_{\exp_{q}(\xi)}$ is an isometry, where  $\xi\in\nu_{q} \Sigma.$
\end{enumerate}
\end{prop}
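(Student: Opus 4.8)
The plan is to reduce this statement to Proposition \ref{prop-flat-metric-N} by replacing the geometric tube $\tub_{r}(\Sigma)$ with an honestly $\F$-invariant neighborhood $V$, using the hypothesis that the leaves of $\F$ are closed and embedded. First I would recall the local structure near $\Sigma$: since $\Sigma$ is the minimal stratum, it is a closed embedded submanifold, and the homothetic transformation lemma (Proposition \ref{homothetic-lemma-stratum}) says that on a tube $\tub_{\rho}(\Sigma)$ the foliation is invariant under the radial homotheties $h_{\lambda}$ with respect to $\Sigma$. The key point is that when the leaves of $\F$ are closed embedded, one can build a genuinely $\F$-saturated open neighborhood $V$ of $\Sigma$ that is contained in such a tube: concretely, take a smooth $\F$-basic function or use the fact that the saturation of a small tube $\tub_{\eps}(\Sigma)$ stays inside $\tub_{\rho}(\Sigma)$ for $\eps$ small (this is where closedness of the leaves is used — a leaf meeting $\tub_{\eps}(\Sigma)$ cannot escape arbitrarily far, by a continuity/compactness argument on the leaf and equifocality, Theorem \ref{thm-s.r.f.-equifocal} and Corollary \ref{cor}). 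One then lets $V$ be the union of all plaques/leaves meeting $\tub_{\eps}(\Sigma)$, which is open and $\F$-invariant, and by construction $\Sigma \subset V \subset \tub_{\rho}(\Sigma)$.

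Next I would transfer the construction of $g^{0}$ from Proposition \ref{prop-flat-metric-N} verbatim. Cover $\Sigma$ by the neighborhoods $U_{\alpha}$ of Lemma \ref{prop-flat-metric}, with the local metrics $g^{0}_{\alpha}$ defined on $\tub_{r}(U_{\alpha})$; shrinking $r$ and the $U_{\alpha}$ we may assume each $\tub_{r}(U_{\alpha})$ and the partition-of-unity gluing take place inside $V$. Then set $g^{0} := \sum_{\alpha} \phi_{\alpha}\, g^{0}_{\alpha}$ on a neighborhood of $\Sigma$, and (shrinking $V$ once more if needed so that $V$ lies inside the domain where $g^{0}$ is defined and the distance-cylinder arguments apply) items (b) and (c) hold exactly as before because they are pointwise/local statements about the distribution $H$ and the exponential map, unaffected by the replacement of the tube by $V$. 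For item (a), the proof that $\F|_{V}$ is Riemannian with respect to $g^{0}$ is the transnormal-function argument of Proposition \ref{prop-flat-metric-N}: for each $x$ one shows the distance-cylinders $C$ around a plaque $P_{x}$ are simultaneously distance-cylinders for every $g^{0}_{\beta}$ and hence for $g^{0}$, via Claims 1 and 2 there (using item (b) of Lemma \ref{prop-flat-metric} to locate $\grad f$ inside $H$, and item (c) to see the metrics agree on $H$); nothing in that argument used that the ambient set was a metric tube rather than an $\F$-invariant open set, so it goes through on $V$.

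The main obstacle, and the only place the new hypothesis genuinely enters, is the first step: producing the $\F$-invariant neighborhood $V$ squeezed between $\Sigma$ and a controlled tube. For a general s.r.f.\ the saturation of a tube can fail to be contained in any tube, but with closed embedded leaves one argues as follows — fix $\rho$ so that $\tub_{\rho}(\Sigma)$ carries the homothetic structure of Proposition \ref{homothetic-lemma-stratum} and the metric $g^{0}$; for each $\eps < \rho$ let $V_{\eps}$ be the $\F$-saturation of $\tub_{\eps}(\Sigma)$; one shows $\bigcap_{\eps>0}\overline{V_{\eps}} = \Sigma$ (if not, a limiting argument using closedness of leaves and the homothety-invariance of $\F$ near $\Sigma$ would produce a leaf through a point of $\Sigma \setminus \Sigma$, a contradiction), so for $\eps$ small enough $V := V_{\eps} \subset \tub_{\rho}(\Sigma)$. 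Once $V$ is in hand, as explained in Remark \ref{rem-Blowup-closed-embedded}, the remainder of the desingularization goes through with $\tub_{r}(\Sigma)$ replaced by $V$. I would then simply write: \emph{the proof is identical to that of Proposition \ref{prop-flat-metric-N}, working on $V$ in place of $\tub_{r}(\Sigma)$}, after establishing the existence of $V$.
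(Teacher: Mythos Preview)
Your high-level strategy matches the paper's exactly: reduce to the proof of Proposition \ref{prop-flat-metric-N} once an $\F$-invariant neighborhood $V$ of $\Sigma$ on which the normal exponential map is a diffeomorphism has been produced, and observe that the transnormal-function and partition-of-unity arguments are local and go through on $V$ just as on $\tub_{r}(\Sigma)$. That part of your proposal is fine.

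The gap is in your construction of $V$. You fix a uniform $\rho>0$ with $\tub_{\rho}(\Sigma)$ a geometric tube and then argue that the $\F$-saturations $V_{\eps}$ of $\tub_{\eps}(\Sigma)$ satisfy $\bigcap_{\eps>0}\overline{V_{\eps}}=\Sigma$, hence $V_{\eps}\subset\tub_{\rho}(\Sigma)$ for small $\eps$. Both steps break down when $\Sigma$ is non-compact (which is allowed: $M$ is only complete, and the leaves are merely closed embedded). First, a uniform normal injectivity radius $\rho$ for $\Sigma$ need not exist; Proposition \ref{homothetic-lemma-stratum} is stated only over relatively compact pieces of $\Sigma$. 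Second, even granting a global tube, the implication ``$\bigcap_{\eps}\overline{V_{\eps}}=\Sigma \Rightarrow V_{\eps}\subset\tub_{\rho}(\Sigma)$ for small $\eps$'' is a compactness argument that fails here: the points of $V_{\eps}\setminus\tub_{\rho}(\Sigma)$ can simply drift to infinity as $\eps\to 0$. (Your sketch of $\bigcap_{\eps}\overline{V_{\eps}}=\Sigma$ is also incomplete: from $p\in\overline{V_{\eps}}$ one only gets points of nearby leaves close to $\Sigma$, and passing this to $L_{p}$ needs a semicontinuity argument you have not supplied.)

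The paper circumvents all of this by never asking for a uniform radius. It first shows, using closedness/embeddedness of leaves and equifocality, that each leaf $L\subset\Sigma$ admits a genuine geometric tube $\tub_{r_{L}}(U_{L})$ for some leaf-dependent $r_{L}$ and neighborhood $U_{L}\subset\Sigma$; this yields an $\F$-invariant open set $U=\bigcup_{L}\tub_{r_{L}}(U_{L})$ on which $\exp$ is a local diffeomorphism. Then it passes to the quotients: it defines $\nu\Sigma/\F$ and an induced map $\exp_{\F}:\nu\Sigma/\F\to M/\F$, checks this is a local homeomorphism near the zero section, and invokes a purely topological lemma (Kosinski, Lemma I.7.2) --- valid for closed, not necessarily compact, subsets --- to upgrade it to a homeomorphism on a neighborhood $V_{1}$ of $\Sigma/\F$. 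Pulling back gives the $\F$-invariant $V\subset U$. If you want to keep your saturation idea, you would need to replace the single $\rho$ by a leafwise-varying radius and replace the compactness step by an argument in the leaf space; at that point you are essentially reproducing the paper's proof.
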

\begin{proof}
We must find a $\F$-invariant neighborhood $V$ of $\Sigma$ in $M$ such that $\exp$ is a diffeomorphism between a neighborhood of  $\Sigma$ in  $\nu\Sigma$ and the neighborhood $V$. Then the rest of  proof is analogous to the proof of Proposition \ref{prop-flat-metric-N}  if one replaces $\tub_{r}(U_{\alpha})$ by  $\tub_{r}(U_{\alpha})\cap V.$
In what follows we construct the neighborhood $V$.

\claim 1: \emph{For each $L$ there exists a small $r$ such that $\tub_{r}(L)$ is a geometric tube}.

In order to prove Claim 1, consider a point $x_{0}\in L$. Since $L$ is closed embedded we can find $r>0$ such that
\begin{itemize}
\item $B_{2r}(x_{0})\cap L$ is a connected submanifold.
\item For $x\in (B_{2r}(x_{0})\cap L)- \{x_{0}\}$ the set $S_{2r}(x)=\exp_{x}(B_{2r}(0)\cap\nu_{x}L)$ is an embedded submanifold.
\item $S_{r}(x)\cap S_{r}(x_{0})=\emptyset$ for $x\in (B_{2r}(x_{0})\cap L)- \{x_{0}\}.$
\item $L\cap S_{2r}(x_{0})=\{x_{0}\}.$
\end{itemize}

Let $x\in L$ and $\xi\in\nu_{x}L$ with $\| \xi\|<r$. Set $\gamma: t\rightarrow \exp_{x}(t\xi)$. Then the choice of $r$, the equifocality of $\F$ and the fact that the leaves of $\F$ are closed embedded imply that $\gamma:[0,1+\epsilon)\rightarrow M$ is a minimal segment of geodesic. Hence $\gamma(1)$ is not conjugate to $\gamma(0)$. This allow us to conclude that $\exp_{x}:B_{r}(0)\cap\nu_{x}L\rightarrow M$ is an immersion.  The choice of $r$, the equifocality of $\F$ and the fact that the leaves of $\F$ are closed embedded also imply that the immersion $\exp_{x}:B_{r}(0)\cap\nu_{x}L\rightarrow M$ is injective and that $S_{r}(x)\cap S_{r}(y)=\emptyset$ if $x\neq y$. This finishes the proof of Claim 1.

It follows from Claim 1  that $\pi: M\rightarrow M/\F$ is an open map.

By Proposition \ref{cor-transnormal-system} and the same arguments of Claim 1 we can deduce the next claim.

\claim 2: \emph{For each $L$ we can find a neighborhood $U_{L} \subset \Sigma$ of $L$  and $r_{L}$ such that $\tub_{r_{L}}(U_{L})$ is a geometric tube.} 

Set $U:=\cup_{L\in \Sigma} \tub_{r_{L}}(U_{L})$. Note that $U$ is $\F$-invariant and $\exp$ is a local diffeomorphism between a neighborhood $\tilde{U}$ of $\Sigma$ in $\nu\Sigma$ and $U$. 

Now we define $\nu\Sigma/\F$ as the quotient of $\nu\Sigma$ with the following relation. Consider $\xi_{1},\xi_{2}\in\nu\Sigma$ and set $t\rightarrow \gamma_{i}(t)=\exp(t\xi_{i})$. We say that $[\xi_{1}]=[\xi_{2}]$ if there exists $\epsilon>0$ such that $\gamma_{1}(\epsilon)$ and $\gamma_{2}(\epsilon)$ are in the same leaf and one can transport $\gamma_{1}$ to $\gamma_{2}$ by parallel transport with respect to Bott connection. 

Claim 2 implies that the projection $\pi_{\Sigma}:\nu\Sigma\rightarrow\nu\Sigma/\F$ restricted to a neighborhood of $\Sigma$ in $\nu\Sigma$ is an open map. This fact and the fact that $[t \xi_{1}]=[t \xi_{2}]$  if $[ \xi_{1}]=[ \xi_{2}]$ imply that $\pi_{\Sigma}:\nu\Sigma\rightarrow\nu\Sigma/\F$ is an open map. 

Define $\exp_{\F}:\nu\Sigma/\F\rightarrow M/\F$ as $\exp_{\F}\circ\pi_{\Sigma}=\pi\circ\exp$. Since $\pi_{\Sigma}$, $\pi$ are open maps and
$\exp:\tilde{U}\rightarrow U$ is local diffeomorphism, we infer that $\exp_{\F}:\pi_{\Sigma}(\tilde{U})\rightarrow \pi(U)$ is a local homeomorphism. 

Since $\Sigma/\F$ is closed, it follows from a classical result of topology (see Kosinski \cite[Lemma I.7.2]{Kosinski}) that there exists a neighborhood $V_{1}$ of $\Sigma/\F$ in $\nu\Sigma/\F$ and $V_{2}$ of $\Sigma/\F$ in $M/\F$ such that $\exp_{\F}:V_{1}\rightarrow V_{2}$ is a  homeomorphism. Finally set $V_{3}=\pi^{-1}(V_{2})$ and define $V:=V_{3}\cap U.$

\end{proof}


\section{Proof of Theorem \ref{thm-Blowup-srf}}

The construction of the desired  metric $\hat{g}_{r}$  on a blow-up space $\hat{M}_{r}(\Sigma)$  will require several steps. 

The first step is the construction of a metric $\tilde{g}$ on a neighborhood of $\Sigma$ with properties similar to the properties of the metric defined in Proposition \ref{lemma-almost-product}. 


\begin{prop}
\label{prop-metrica-adaptada-tub}
Let $\Sigma$ be the minimal stratum (with leaves of dimension $k_{0}$). 
Then there exists  a neighborhood  $\tub_{r}(\Sigma)$ of $\Sigma$   and a metric  $\tilde{g}$ on $\tub_{r}(\Sigma)$   such that:
\begin{enumerate}
\item[(a)] $\F$ restrict to  $\tub_{r}(\Sigma)$ is a Riemannian foliation with respect to $\tilde{g}$.
\item[(b)]The associated transverse metric is not changed, i.e., the distance between the plaques with respect to $g$ is the same distance between the plaques with respect to $\tilde{g}$.
\item[(c)] If a curve $\gamma$ is a unit speed segment of geodesic  orthogonal to $\Sigma$ with respect to the original metric $g$, then 
$\gamma$ is a unit speed segment of geodesic  orthogonal to $\Sigma$ with respect to the new metric $\tilde{g}$.
\item[(d)] There exists a smooth distribution $H$, whose dimension is equal to the codimension of the leaves of $\F|_{\Sigma}$, such that the normal space  of each plaque of $\F|_{\tub_{r}(\Sigma)}$ (with respect to $\tilde{g}$) is contained in $H.$
\item[(e)] The restriction of the metric $\tilde{g}$ to the stratum $\Sigma$ coincides to the restriction of the original metric $g$ to $\Sigma.$ 
\end{enumerate}
\end{prop}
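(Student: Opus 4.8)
The plan is to build $\tilde g$ locally, by adapting the ``almost product'' construction of Proposition~\ref{lemma-almost-product} and the construction of Lemma~\ref{prop-flat-metric} to the minimal stratum $\Sigma$, and then to glue the local metrics with a partition of unity, exactly as one passes from Lemma~\ref{prop-flat-metric} to Proposition~\ref{prop-flat-metric-N}. First I would fix $q\in\Sigma$, choose $X_{1},\dots,X_{k_{0}}\in\singularF$ with $\{X_{i}(q)\}$ a basis of $T_{q}P_{q}$, and, flowing along the associated one-parameter groups, produce on a tube $\tub_{r}(U_{\alpha})$ over a neighborhood $U_{\alpha}$ of $q$ in $\Sigma$ a regular foliation $\F^{2}$ whose plaques $P^{2}$ satisfy $P^{2}_{z}\subset P_{z}$ and meet each slice of $\F$ at exactly one point (here, as in Proposition~\ref{lemma-almost-product}, one uses Proposition~\ref{homothetic-lemma-stratum} and Proposition~\ref{cor-transnormal-system} in place of Proposition~\ref{homothetic-lemma}). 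I would then set $H_{\exp_{x}(\xi)}:=T_{\exp_{x}(\xi)}S_{x}$, where $S_{x}=\exp_{x}(\nu_{x}L_{x}\cap B_{r}(0))$ is the slice of $\F$ through $x\in\Sigma$; this distribution has dimension $\dim M-k_{0}=\codim L_{q}$, equal to the codimension of the leaves of $\F|_{\Sigma}$, and it does not depend on the auxiliary data. Finally I would put $\tilde g_{\alpha}:=\tilde g^{1}_{\alpha}+\tilde g^{2}_{\alpha}$, where $\tilde g^{2}_{\alpha}=(\pi|_{P^{2}})^{*}g$ with $\pi:\tub_{r}(U_{\alpha})\to\Sigma$ the radial projection, $\tilde g^{1}_{\alpha}$ is the metric on $H$ for which $d(\exp_{x})_{\xi}:(T_{\xi}T_{x}S_{x},g)\to(H_{\exp_{x}(\xi)},\tilde g^{1}_{\alpha})$ is an isometry, and $\F^{2}$ and $H$ are declared $\tilde g_{\alpha}$-orthogonal.

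With this definition the local properties follow much as in the proofs of Proposition~\ref{lemma-almost-product} and Lemma~\ref{prop-flat-metric}. Item (a) restricted to $\tub_{r}(U_{\alpha})$ follows from the local equidistance of the plaques, using the transnormal-function argument together with Proposition~\ref{prop-metric-stratum}; item (d) holds by construction, since $\tilde g_{\alpha}$ makes $H$ orthogonal to $T\F^{2}$ and the $\tilde g_{\alpha}$-normal space of a plaque of $\F$ lies in $H$; and item (e) follows by evaluating at a point $y\in\Sigma$, where $\nu_{y}\F^{2}=\nu_{y}P_{y}$, the map $d\exp$ reduces to the identity, and one reads off $\tilde g_{\alpha}|_{T_{y}\Sigma}=g|_{T_{y}\Sigma}$ (this also guarantees that orthogonality to $\Sigma$ is preserved). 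For item (c) I would use that a unit speed $g$-geodesic $\gamma$ orthogonal to $\Sigma$ at $y\in\Sigma$ lies in the slice $S_{y}$: on $S_{y}$ the map $\exp_{y}$ is a $\tilde g^{1}_{\alpha}$-isometry onto its image, so by the Gauss lemma $\gamma'$ keeps its $g$-norm, $\gamma$ is, up to reparametrization, an integral curve of $\grad f$ for the transnormal function $f=d^{g}(\cdot,\Sigma)^{2}$, this gradient lies in $D_p\cap H$ and is unchanged, and the fact that integral curves of gradients of transnormal functions are geodesics up to reparametrization (as in the proof of item (e) of Proposition~\ref{lemma-almost-product} and Remark~\ref{rem-prop-flat-metric-in-S}) promotes $\gamma$ to a unit speed geodesic of $(\tub_{r}(U_{\alpha}),\tilde g_{\alpha})$. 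The same Gauss lemma observation — radial directions keep their $g$-norm — shows that for $f=d^{g}(\cdot,P_{x})^{2}$ one has $\tilde g_{\alpha}(\widetilde{\grad} f,\widetilde{\grad} f)=g(\grad f,\grad f)=b\circ f$, whence by Q--M Wang's formula the distance between plaques is unchanged; this is item (b).

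Then I would glue: take a locally finite cover $\{U_{\alpha}\}$ of $\Sigma$ by such neighborhoods, a subordinate partition of unity $\{\phi_{\alpha}\}$, and set $\tilde g:=\sum_{\alpha}\phi_{\alpha}\tilde g_{\alpha}$ on a suitably shrunk $\tub_{r}(\Sigma)$. Item (e) passes to $\tilde g$ at once. Item (d) passes because $H$ is common to all the local constructions and, crucially, all the $\tilde g_{\alpha}$ restrict to the \emph{same} metric on $H$ (the one making $d\exp$ an isometry onto the slices), which lets one deduce $(H_{y})^{\perp_{\tilde g}}\subseteq\sum_{\alpha}T_{y}P^{2}_{\alpha}\subseteq T_{y}L_{y}$. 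Items (a), (b), (c) for $\tilde g$ follow by repeating the transnormal-function argument of Proposition~\ref{prop-flat-metric-N}: for the transnormal function $f$ with axis $P_{x}$, each gradient $\widetilde{\grad}_{\alpha} f$ lies in $H$, and since the $\tilde g_{\alpha}|_{H}$ coincide these gradients are all equal, hence equal to $\widetilde{\grad} f$; therefore $f$ is transnormal for $\tilde g$ with the same function $b$, its distance cylinders are the same sets, the transverse distances are unchanged, and (taking $f=d^{g}(\cdot,\Sigma)^{2}$) the radial geodesics remain unit speed geodesics. The step I expect to be the main obstacle is precisely this gluing: the almost-product metric of Proposition~\ref{lemma-almost-product} depends on the auxiliary foliation $\F^{2}$, and two choices give metrics that a priori disagree off $\Sigma$, so the partition-of-unity argument is only available once one has rigidified the metric on the common distribution $H$; having done so one must then check, via the Gauss lemma and the transnormal calculus above, that this rigid choice still yields the unchanged transverse metric (b) and the unit speed geodesics orthogonal to $\Sigma$ (c).
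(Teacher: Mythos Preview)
Your construction is essentially that of the metric $g^{0}$ from Lemma~\ref{prop-flat-metric} and Proposition~\ref{prop-flat-metric-N}, not the metric $\tilde g$ this proposition asks for. The difference lies in the metric you put on $H$: you take the flat one making $d(\exp_x)_{\xi}:(T_{\xi}T_{x}S_{x},g)\to H_{\exp_{x}(\xi)}$ an isometry, i.e.\ exactly $(g^{0}_{\alpha})^{1}$; the paper instead sets $\tilde g^{1}:=(\Pi|_{H})^{*}g$, where $\Pi:T_{p}M\to D_{p}$ is the $g$-orthogonal projection onto $D:=\mathcal{P}^{\perp_{g}}$ for a global distribution $\mathcal{P}$ tangent to the leaves. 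This matters precisely for item~(b).

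Your argument for (b) is where the gap is. You invoke the Gauss lemma to claim that for $f=d^{g}(\cdot,P_{x})^{2}$ one has $\tilde g_{\alpha}(\widetilde{\grad} f,\widetilde{\grad} f)=g(\grad f,\grad f)$. But the Gauss lemma along $\Sigma$ only controls vectors that are \emph{radial from $\Sigma$}; the vector $\grad f$ is radial from $P_{x}$, and when $P_{x}\not\subset\Sigma$ these are different directions, so there is no reason your flat identification preserves its norm. Indeed Proposition~\ref{prop-flat-metric-N} makes no claim that $g^{0}$ preserves transverse distances. The computation in the proof of Proposition~\ref{lemma-almost-product} that yields $\widetilde{\grad} f=\grad f+l$ with $l$ tangent to the auxiliary foliation hinges on the specific pullback form $\tilde g^{1}=(\Pi|_{T_{p}S})^{*}g$ --- it is exactly the step $g(\grad f,v)=\tilde g^{1}(\widetilde{\grad} f,w)=g(\Pi\widetilde{\grad} f,v)$ that has no analogue for your flat $\tilde g^{1}_{\alpha}$. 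You correctly anticipated a dilemma at the gluing stage: with the projection metric on $H$ the local pieces depend on $\F^{2}_{\alpha}$ and do not agree on $H$, while with the flat metric they agree on $H$ but (b) is lost.

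The paper resolves this as follows. It first constructs $g^{0}$ globally (this is your $\sum\phi_{\alpha}\tilde g_{\alpha}$, i.e.\ Proposition~\ref{prop-flat-metric-N}) and uses it \emph{only} to produce a single global distribution $\mathcal{P}:=H^{\perp_{g^{0}}}$ tangent to the leaves, replacing the local $T\F^{2}$. Then $\tilde g$ is defined in one shot on all of $\tub_{r}(\Sigma)$ as $\tilde g^{1}+\tilde g^{2}$ with $\tilde g^{2}=(d\pi|_{\mathcal{P}})^{*}g$ and $\tilde g^{1}=(\Pi|_{H})^{*}g$, and the proof of Proposition~\ref{lemma-almost-product} runs verbatim with $T\F^{2}$ replaced by the (possibly nonintegrable) $\mathcal{P}$. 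No partition of unity is needed for $\tilde g$ itself, and items (a)--(e) follow from the same transnormal computation as before.
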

\begin{proof}

First note that the proof of Proposition  \ref{lemma-almost-product} works if we replace $T P^{2}$ by a possible nonintegrable distribution $\mathcal{P}$  so that $\mathcal{P}$ is always tangent to the leaves. 

Now consider the metric $g^{0}$ and distribution $H$ of Proposition \ref{prop-flat-metric-N} and define $\mathcal{P}$ as the orthogonal space (with respect to $g^{0}$) to $H$. Note that $\mathcal{P}$ is always tangent to the leaves of $\F$.

Using the fact that  $d \pi|_{\mathcal{P}}: \mathcal{P}\rightarrow P_{q}$ is an isomorphism, we can define a metric on  $\mathcal{P}$ as
$g^{2}:=(d\pi)^{*}g.$

Let $D$ be the normal distribution  to $\mathcal{P}$ with respect to the original metric $g$ and define $\Pi:T_{p}M\rightarrow D_{p}$ as the orthogonal projection with respect to $g$. Note that $\Pi|_{H}:H_{p}\rightarrow D_{p}$ is an isomorphism. We define $g^{1}_{\alpha}:= (\Pi|_{H})^{*}g$ and $\tilde{g}:=g^{1}+g^{2}$, meaning that $\mathcal{P}$ and the distribution $H$ meet orthogonally (with respect to $\tilde{g}$).  

Finally we can repeat the same arguments of the proof of Proposition  \ref{lemma-almost-product} to get the desired result.

\end{proof}

We come now to the second step of our construction, 
which is to change the metric $\tilde{g}$ in 
some directions, getting a new metric $\hat{g}^{M}$ on $\tub_{r}(\Sigma)-\Sigma$. 


First note that for small $\xi\in \nu_{q}\Sigma$ we can decompose $T_{\exp_{q}(\xi)}M$ as a direct sum of orthogonal subspaces (with respect to the metric $\tilde{g}$ of 
Proposition \ref{prop-metrica-adaptada-tub}) 
\[T_{\exp_{q}(\xi)}M= H_{\exp_{q}(\xi)}^{\perp}\oplus H_{\exp_{q}(\xi)}^{1}\oplus H_{\exp_{q}(\xi)}^{2}\oplus H_{\exp_{q}(\xi)}^{3},\]
where $H_{\exp_{q}(\xi)}^{\perp}$ is orthogonal to $H_{\exp_{q}(\xi)}$ and  $H_{\exp_{q}(\xi)}^{i}\subset H_{\exp_{q}(\xi)}$ is defined below.
\begin{enumerate}
\item $H_{\exp_{q}(\xi)}^{1}$ is the line generated by $\frac{d}{d t}\exp_{q}(t\xi)|_{t=1}$. 
\item Set $T_{q}=\exp_{q}(\nu(\Sigma)\cap B_{\epsilon}(0))$. Then $H_{\exp_{q}(\xi)}^{2}\subset T_{\exp_{q}(\xi)}T_{q}$ and orthogonal to $H_{\exp_{q}(\xi)}^{1}$.
\item $H_{\exp_{q}(\xi)}^{3}$ is orthogonal to $T_{\exp_{q}(\xi)}T_{q}$. 
\end{enumerate}

Now we define a new metric $\hat{g}^{M}$ on $\tub_{r}(\Sigma)-\Sigma$ as follows:
\begin{equation}
\label{eq-metrica-gM}
\hat{g}^{M}_{\exp_{q}(\xi)}(V,W):=\tilde{g}(V_{\perp},W_{\perp})+ \tilde{g}(V_{1},W_{1})+\frac{1}{\|\xi\|^{2}} \tilde{g}(V_{2},W_{2})+  \tilde{g}(V_{3},W_{3}),
\end{equation}
where $V_{i},W_{i}\in H_{\exp_{q}(\xi)}^{i}$ and  $V_{\perp},W_{\perp}\in H_{\exp_{q}(\xi)}^{\perp}.$

\begin{prop}
\label{prop-metric-on-N-L}
$\F$ is a s.r.f on $\tub_{r}(\Sigma)-\Sigma$ with respect to $\hat{g}^{M}.$ In addition if $\gamma:[0,a]\rightarrow \tub_{r}(\Sigma)$ is a unit speed geodesic orthogonal to $\Sigma$ with respect to the original metric $g$, then $\gamma|_{(0,a]}$ is a unit speed  geodesic with respect to $\hat{g}^{M}.$
\end{prop}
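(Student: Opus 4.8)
The plan is to verify the two assertions separately, using Proposition \ref{rem-prop-metric-stratum} for the first and a direct computation of the geodesic equation for the second. For the statement that $\F$ is a s.r.f.\ on $\tub_{r}(\Sigma)-\Sigma$ with respect to $\hat{g}^{M}$, I would check that the four hypotheses (a)--(d) of Proposition \ref{rem-prop-metric-stratum} are satisfied by $\F$ restricted to $\tub_{r}(\Sigma)-\Sigma$ with the metric $\hat{g}^{M}$. Hypothesis (a) is automatic, since on $\tub_{r}(\Sigma)-\Sigma$ the strata are the ambient strata minus $\Sigma$, still submanifolds. Hypothesis (b), the convergence of plaques $P_{x_n}\to P_x$ as $x_n\to x$, holds because $\hat{g}^{M}$ agrees with $\tilde{g}$ up to a positive rescaling of the $H^{2}$-directions that depends smoothly on the point in $\tub_{r}(\Sigma)-\Sigma$, so the foliation is the same set-theoretically and the plaque-convergence is inherited from Proposition \ref{prop-metrica-adaptada-tub}. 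Hypothesis (c), that $\F$ restricted to each stratum of $\tub_{r}(\Sigma)-\Sigma$ is Riemannian with respect to $\hat{g}^{M}$, I would reduce to the corresponding fact for $\tilde{g}$: the key point is that the $1/\|\xi\|^{2}$ factor multiplies only the $H^{2}$-component $T_{\exp_q(\xi)}T_q$ orthogonal to $H^{1}$, which by Proposition \ref{cor-transnormal-system} is exactly the slice direction along which plaques of a fixed stratum $\Sigma_P$ move; since $\|\xi\|$ is constant along a plaque of $\F$ in $T_q$ (the plaques lie in spheres $S_{\|\xi\|}$, by Proposition \ref{homothetic-lemma-stratum}), the rescaling is locally constant on the relevant level sets and hence does not disturb the transverse-metric computation. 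Hypothesis (d) I would get by taking, for a vector $v$ not tangent to a stratum, the same curve $\beta$ and distribution $t\mapsto V_t$ furnished by the s.r.f.\ structure for $\tilde{g}$, noting again that $\hat{g}^{M}$-orthogonality of $v$ to $V_0$ follows from $\tilde{g}$-orthogonality after adjusting the $H^{2}$-scaling.

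For the second assertion, that a unit-speed $g$-geodesic $\gamma:[0,a]\to\tub_{r}(\Sigma)$ orthogonal to $\Sigma$ remains, on $(0,a]$, a unit-speed $\hat{g}^{M}$-geodesic, I would argue as follows. By Proposition \ref{prop-metrica-adaptada-tub}(c), $\gamma$ is already a unit-speed $\tilde{g}$-geodesic orthogonal to $\Sigma$, so I may assume the background metric is $\tilde{g}$. Such a $\gamma$ has the form $t\mapsto \exp_q(t\xi)$ for a fixed $\xi\in\nu_q\Sigma$ (up to affine reparametrization), so $\gamma'(t)$ lies in $H^{1}_{\gamma(t)}$ for all $t>0$; in particular $\|\gamma'\|_{\tilde g}=\|\xi\|$ is constant and on the $H^{1}$-line $\hat{g}^{M}$ coincides with $\tilde{g}$, which gives the unit-speed claim after noting the original $\gamma$ was already unit speed. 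To see $\gamma$ is a $\hat{g}^{M}$-geodesic I would compute $\tfrac{\hat D}{dt}\gamma'$, where $\hat D$ is the Levi-Civita connection of $\hat{g}^{M}$: since $\hat{g}^{M}$ and $\tilde{g}$ differ only by the conformal-type factor $\|\xi\|^{-2}$ on the sub-bundle $H^{2}$ and are equal elsewhere, the difference tensor $\hat D - \tilde D$ is expressible through the $\tilde g$-gradient of (functions of) $\|\xi\|$ and through $\tilde g$-projections onto $H^{1},H^{2},H^{3}$. Evaluating this difference tensor on the pair $(\gamma',\gamma')$, every term either contains a factor of the form $\tilde g(\gamma', \cdot)$ with $\cdot\in H^{2}\oplus H^{3}$ (which vanishes, since $\gamma'\in H^{1}$) or a derivative of $\|\xi\|$ in a direction tangent to the radial geodesic — but $\|\xi\|$ is precisely the distance-to-$\Sigma$ coordinate, whose gradient is $\gamma'/\|\gamma'\|$, so these terms reduce to ordinary reparametrization terms that are already absorbed because $\|\xi\| = \|\xi\|$ is affine in $t$ is false — rather $\|\gamma(t)\|$-to-$\Sigma$ equals $t\|\xi\|$, which is affine, so the reparametrization contributes nothing. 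Hence $\tfrac{\hat D}{dt}\gamma' = \tfrac{\tilde D}{dt}\gamma' = 0$ on $(0,a]$.

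I would organize the write-up so that the geometric input is quoted cleanly: Proposition \ref{cor-transnormal-system} and Proposition \ref{homothetic-lemma-stratum} to identify along which directions a plaque of a given stratum can move and to record that $\|\xi\|$ is locally constant on those plaques, and Proposition \ref{prop-metrica-adaptada-tub} to transfer everything from $g$ to $\tilde{g}$ at the outset. The main obstacle, I expect, is \emph{Hypothesis (c)} of Proposition \ref{rem-prop-metric-stratum}: one must check carefully that rescaling the $H^{2}$-directions by the non-constant function $\|\xi\|^{-2}$ does not destroy the Riemannian-foliation property of $\F$ restricted to a singular stratum $\Sigma_P\neq\Sigma$. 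The point that saves this is that, by equifocality together with Proposition \ref{cor-transnormal-system}, the plaques of $\F|_{\Sigma_P}$ near a given plaque stay inside a single tube $T_x$ and at fixed distance $\|\xi\|$ from $\Sigma$, so along the leaves of $\F|_{\Sigma_P}$ the conformal factor is constant and the Lie-derivative-of-transverse-metric computation of Proposition \ref{prop-equivalencia-df-FR}(c) goes through verbatim as for $\tilde g$. Once (a)--(d) are in place, Proposition \ref{rem-prop-metric-stratum} closes the first assertion, and the connection computation above closes the second.
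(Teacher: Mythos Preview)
Your argument for the first assertion is essentially the paper's: the key step is that the factor $\|\xi\|^{-2}$ is constant along each plaque (Proposition \ref{homothetic-lemma-stratum}) and that foliated normal fields stay in the same $H^{i}$-block (Proposition \ref{cor-transnormal-system}), so the Lie derivative of the transverse metric in each stratum vanishes just as it did for $\tilde g$. One simplification: once you have verified hypothesis (c) of Proposition \ref{rem-prop-metric-stratum}---that $\F$ restricted to each stratum is Riemannian for $\hat g^{M}$---you can invoke Proposition \ref{prop-metric-stratum} directly and skip (a), (b), (d); that is exactly what the paper does.

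For the geodesic statement your route genuinely differs from the paper's. The paper gives a short length-comparison argument: since $\hat g^{M}\geq \tilde g$ (the only change is the factor $\|\xi\|^{-2}\geq 1$ on $H^{2}$), one has $\tilde l(\beta)\leq \hat l^{M}(\beta)$ for every curve $\beta$; if some $\gamma|_{[t_{0}-\epsilon,t_{0}]}$ failed to be a $\hat g^{M}$-geodesic, a strictly $\hat l^{M}$-shorter horizontal curve $\beta$ joining $L_{\gamma(t_{0})}$ to $L_{\gamma(t_{0}-\epsilon)}$ would satisfy $\tilde l(\beta)\leq\hat l^{M}(\beta)<\hat l^{M}(\gamma)=\tilde l(\gamma)$, contradicting that $\gamma$ is a minimal horizontal $\tilde g$-geodesic between those leaves. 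This avoids any connection computation.

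Your Koszul-formula idea also works, but the passage in your proposal is garbled (the sentence about ``$\|\xi\|=\|\xi\|$ is affine in $t$ is false --- rather\ldots'' contradicts itself mid-stream). The clean version is: extend $\gamma'$ to the unit radial field $\partial_{r}\in H^{1}$; then for any $Z$ one has $\hat g^{M}(\partial_{r},Z)=\tilde g(\partial_{r},Z)$ and $\hat g^{M}(\partial_{r},\partial_{r})=\tilde g(\partial_{r},\partial_{r})$, because pairing with a vector in $H^{1}$ only sees the $H^{1}$-component, on which the two metrics agree. Hence every term in the Koszul expression for $2\hat g^{M}(\hat\nabla_{\gamma'}\gamma',Z)$ equals its $\tilde g$-counterpart, giving $\hat g^{M}(\hat\nabla_{\gamma'}\gamma',Z)=\tilde g(\tilde\nabla_{\gamma'}\gamma',Z)=0$ for all $Z$. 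The paper's comparison argument is shorter and, usefully, also yields the inequality $\tilde l\leq \hat l^{M}$ that is reused later (Remark \ref{rem-metrica-hatg-tildeg}); your computation is more direct but does not produce that byproduct.
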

\begin{proof}

Let $P$ be a plaque and $\Sigma_{P}$ the stratum that contains $P$. Note that if $Y$ is a foliated vector field along $P$ tangent to $\Sigma_{P}$ and $H_{i}$ at a point $x$, then it follows from Proposition \ref{cor-transnormal-system}  that $Y$ is always tangent to $H_{i}$ and $\Sigma_{P}$. Also note that the function $\frac{1}{\|\xi\|^{2}}$ is constant along $P$ (see Proposition \ref{homothetic-lemma-stratum}). These two facts and  Proposition \ref{prop-metrica-adaptada-tub} imply that 
the Lie derivative $L_{X}(\hat{g}^{M})_{T}$ is zero. We can use the same argument to conclude that 
the Lie derivate of the transverse metric $(\hat{g}^{M})_{T}$ along each other plaque of $\Sigma_{P}$ is also zero and 
 hence, by Proposition \ref{prop-equivalencia-df-FR}, $\F|_{\Sigma}$ is a Riemannian foliation with respect to $\hat{g}^{M}$. Therefore, by Proposition \ref{prop-metric-stratum}, $\F$ is a s.r.f on  $\tub_{r}(\Sigma)-\Sigma$ with respect to $\hat{g}^{M}.$

Now we prove that $\gamma|_{(0,a]}$ is a geodesic with respect to $\hat{g}^{M}.$ It suffices to prove that for each $t_{0}\in(0,a]$ there exists $\epsilon>0$ such that $\gamma_{[t_{0}-\epsilon,t_{0}]}$ is a geodesic. Suppose that this is not true. 
 Since $\gamma$ is a horizontal geodesic with respect to $\tilde{g}$ (see Proposition  \ref{prop-metrica-adaptada-tub}) all the leaves $L_{\gamma(t)}$ belong to the same stratum $\Sigma_{\gamma(a)}$ for $t\in(0,a]$.  Then for small $\epsilon$   there exists a segment of horizontal geodesic (with respect to to $\hat{g}^{M}$) $\beta\subset \Sigma_{\gamma(a)}$ that joins $L_{\gamma(t_{0})}$ to $L_{\gamma(t_{0}-\epsilon)}$ and so that 

\begin{eqnarray*}
\hat{l}^{M}(\beta)&< & \hat{l}^{M}(\gamma|_{[t_{0}-\epsilon,t_{0}]})\\
                                   &=& \tilde{l}(\gamma|_{[t_{0}-\epsilon,t_{0}]}).
\end{eqnarray*} 
where $\tilde{l}(\cdot)$ and $ \hat{l}^{M}(\cdot)$ denote the length of the curve with respect to the metrics $\tilde{g}$ and  $\hat{g}^{M}$. On the other hand, 
\begin{eqnarray*}
\tilde{l}(\gamma|_{[t_{0}-\epsilon,t_{0}]})&< &\tilde{l}(\beta)\\
                                   &\leq& \hat{l}^{M}(\beta).
\end{eqnarray*} 
We arrived at a contradiction, and hence $\gamma_{[t_{0}-\epsilon,t_{0}]}$ is a geodesic. 

\end{proof}

In the third step of our construction we pullback the metric $\hat{g}^M$ to the blow-up of $\tub_{r}(\Sigma)$ along $\Sigma$ (denoted here by $\hat{N}$) and then prove that the induced foliation $\hat{\F}$ on $\hat{N}$ is a s.r.f with respect to this new metric.
We start by recalling the  definition of blow-up  along a submanifold.

\begin{prop}
\label{prop-blowup-N}

Let $\tub_{r}(\Sigma)$ be the neighborhood of the minimal stratum $\Sigma$ defined in  Proposition \ref{prop-metrica-adaptada-tub}. Then
\begin{enumerate}
\item[(a)] $\hat{N}:=\{(x,[\xi])\in \tub_{r}(\Sigma)\times \mathbb{P}(\nu \Sigma)| x\in \exp^{\perp}(t\xi)\}$ is a smooth manifold (called blow-up of $\tub_{r}(\Sigma)$ along $\Sigma$) and $\hat{\pi}:\hat{N}\rightarrow \tub_{r}(\Sigma)$, defined as $\hat{\pi}(x,[\xi])=x$ is also smooth.
\item[(b)]$\hat{\Sigma}:=\hat{\pi}^{-1}(\Sigma)=\{(\rho([\xi]),[\xi])\in \hat{N}\}=\mathbb{P}(\nu\Sigma)$, where $\rho: \mathbb{P}(\nu \Sigma)\rightarrow \Sigma$ is the canonical projection.
\item[(c)] There exists a  singular foliation $\hat{\F}$ on $\hat{N}$ so that  $\hat{\pi}: (\hat{N}-\hat{\Sigma}, \hat{\F})\rightarrow (\tub_{r}(\Sigma)-\Sigma, \F)$ is a foliated diffeomorphism.
\end{enumerate}
\end{prop}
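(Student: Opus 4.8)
The plan is to reduce the whole statement to the model situation of the blow-up of the total space of the normal bundle $\nu\Sigma$ along its zero section. Since $\tub_{r}(\Sigma)$ is a geometric tube, the normal exponential map $\exp^{\perp}\colon B_{r}(\nu\Sigma)\to\tub_{r}(\Sigma)$ is a diffeomorphism, and $(v,[\xi])\mapsto(\exp^{\perp}(v),[\xi])$ identifies $\hat N$ with $\hat E:=\{(v,[\xi])\in B_{r}(\nu\Sigma)\times\mathbb{P}(\nu\Sigma)\mid v\in\mathbb{R}\xi\}$ --- the part lying over $B_{r}$ of the tautological line bundle on $\mathbb{P}(\nu\Sigma)$ --- in such a way that $\hat\pi$ becomes $\exp^{\perp}$ followed by the first projection, so it suffices to argue with $\hat E$. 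For item (a) I would then write down explicit charts of $\hat E$: over an open set $U\subseteq\Sigma$ trivializing $\nu\Sigma$ one has $\nu\Sigma|_{U}\cong U\times\mathbb{R}^{m}$, $m=\codim\Sigma$, and $\mathbb{P}(\nu\Sigma)|_{U}\cong U\times\mathbb{RP}^{m-1}$; on the affine chart $\{\xi_{i}\neq0\}$ of $\mathbb{RP}^{m-1}$ the coordinates $(u;t;a_{1},\dots,\widehat{a_{i}},\dots,a_{m})\in U\times\mathbb{R}\times\mathbb{R}^{m-1}$ describe the point of $\hat E$ which, in the trivialization over $u$, is the vector $t\,(a_{1},\dots,1,\dots,a_{m})$ together with the line it spans. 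The transitions between two such charts only multiply and divide coordinates by a function $a_{j}$ that is nonzero on the overlap, hence are smooth, so $\hat E$ (thus $\hat N$) is a smooth manifold; and in these coordinates $\hat\pi$ reads $(u,t,a)\mapsto\exp^{\perp}_{u}\big(t\,(a_{1},\dots,1,\dots,a_{m})\big)$, a composition of smooth maps. I would also note that $\mathbb{P}(\nu\Sigma)$ is an $\mathbb{RP}^{m-1}$-bundle over $\Sigma$, hence compact when $\Sigma$ is, a fact needed later.

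Items (b) and the first half of (c) are then formal consequences of $\exp^{\perp}$ being a diffeomorphism. A point $(x,[\xi])\in\hat N$ satisfies $\hat\pi(x,[\xi])=x\in\Sigma$ exactly when the vector $v\in\nu\Sigma$ with $\exp^{\perp}(v)=x$ lies in the zero section, i.e. $v=0$, i.e. $x=\rho([\xi])$; this yields $\hat\Sigma=\{(\rho([\xi]),[\xi])\}$ and the diffeomorphism $\hat\Sigma\to\mathbb{P}(\nu\Sigma)$, $(\rho([\xi]),[\xi])\mapsto[\xi]$. If $x\notin\Sigma$ then $v:=(\exp^{\perp})^{-1}(x)\neq0$ and $[v]$ is the only line through which $x$ is reached, so $x\mapsto(x,[v])$ is a smooth two-sided inverse of $\hat\pi$, and therefore $\hat\pi\colon\hat N-\hat\Sigma\to\tub_{r}(\Sigma)-\Sigma$ is a diffeomorphism.

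The real content of (c), and the step I expect to require the most care, is producing the singular foliation $\hat\F$ on all of $\hat N$ --- that is, extending the evident pullback foliation across the exceptional divisor $\hat\Sigma$. The key observation is that $\Sigma$, being a union of leaves of $\F$, is invariant under every $X\in\singularF$, so $X(p)\in T_{p}L_{p}\subseteq T_{p}\Sigma$ for all $p\in\Sigma$; by the standard lifting lemma for blow-ups this tangency is precisely what guarantees that $X$ possesses a unique smooth vector field $\hat X$ on $\hat N$ with $d\hat\pi\circ\hat X=X\circ\hat\pi$, and that $\hat X$ is automatically tangent to $\hat\Sigma$. Since $\singularF$ is a module closed under the Lie bracket and $\widehat{[X,Y]}=[\hat X,\hat Y]$ by uniqueness of the lift, the family $\{\hat X:X\in\singularF\}$ is a bracket-closed family of smooth vector fields on $\hat N$; by the Stefan--Sussmann orbit theorem its orbits partition $\hat N$ into connected immersed submanifolds on which this family acts transitively, and I would take $\hat\F$ to be this partition. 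Over $\hat N-\hat\Sigma$ the diffeomorphism $\hat\pi$ conjugates $\{\hat X\}$ to $\singularF$, hence carries each leaf of $\hat\F$ onto a leaf of $\F$ and conversely; this is exactly the statement that $\hat\pi\colon(\hat N-\hat\Sigma,\hat\F)\to(\tub_{r}(\Sigma)-\Sigma,\F)$ is a foliated diffeomorphism.

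The two points in the last step that demand honest verification are the smoothness of the lift $\hat X$ across $\hat\Sigma$ --- which, apart from the abstract lifting lemma, can be checked by hand in the charts of part (a) using that $X$ is tangent to $\Sigma$ --- and that the resulting orbit partition really satisfies the defining conditions of a singular foliation; for the latter, transitivity of $\{\hat X\}$ on each orbit already handles condition (1) of the Definition, while condition (2) is not needed here and is addressed, together with the choice of metric, in the subsequent propositions.
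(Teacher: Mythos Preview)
Your treatment of (a) and (b) is exactly the standard argument the paper has in mind (it simply calls these parts ``standard''). For (c), however, you take a genuinely different route. The paper invokes the equifocality of $\F$ (Theorem~\ref{thm-s.r.f.-equifocal}) together with the Duistermaat--Kolk construction for blow-ups of isometric actions: in that picture the leaves of $\hat{\F}$ over $\hat{\Sigma}$ arise as limits of the pulled-back leaves $\hat{\pi}^{-1}(L_{\exp(t\xi)})$ as $t\to 0$, and equifocality is precisely what guarantees these limits are well-defined immersed submanifolds of the expected dimension. You instead lift each $X\in\singularF$ to $\hat{N}$ using only that $X$ is tangent to the center $\Sigma$, and then declare $\hat{\F}$ to be the Stefan--Sussmann orbit partition of the lifted family; this is more self-contained, bypasses equifocality entirely, and delivers condition~(1) of the definition of a singular foliation automatically. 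What the paper's approach buys is an explicit geometric description of $\hat{\F}|_{\hat{\Sigma}}$---essentially the foliation of $\mathbb{P}(\nu\Sigma)$ by ``parallel normal directions'' along each leaf of $\F|_{\Sigma}$---which makes the subsequent verifications in Proposition~\ref{prop-metric-on-hatN} (that $(\hat{\Sigma},\hat{\F}|_{\hat{\Sigma}},\hat{g})$ is again a s.r.f.\ and that the leaf dimension strictly exceeds $k_0$) more transparent; your route is cleaner for the bare existence claim of item~(c) as stated.
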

\begin{proof}
The proofs of items (a) and (b) are standard. In order to prove item (c) one can use the equifocality of $\F$ (see Theorem \ref{thm-s.r.f.-equifocal}) and the same argument used for blow-up of isometric actions (see Duistermaat and Kolk \cite[Section 2.9]{Duistermaat}).
\end{proof}


\begin{prop}
\label{prop-metric-on-hatN}
Consider the  manifold $\hat{N}$ and the map $\hat{\pi}:\hat{N}\rightarrow \tub_{r}(\Sigma)$  defined in Proposition \ref{prop-blowup-N}.
Then there exists a metric $\hat{g}$ on $\hat{N}$ 
 with the following properties:
\begin{enumerate}
\item[(a)] if a unit speed geodesic $\hat{\gamma}$ is orthogonal to $\hat{\Sigma}$ with respect to $\hat{g}$, then $\hat{\pi}(\hat{\gamma})$ is a unit speed  geodesic orthogonal to $\Sigma$ with respect to the original metric $g$.  
\item[(b)] $\hat{\pi}|_{\hat{\Sigma}}: (\hat{\Sigma},\hat{g})\rightarrow(\Sigma,g)$ is a Riemannian submersion. 
\item[(c)] $(\hat{N},\hat{\F},\hat{g})$ is a s.r.f.
\item[(d)]$(\hat{\Sigma},\hat{\F}|_{\hat{\Sigma}},\hat{g})$ is a s.r.f 
and the liftings of horizontal geodesics of $(\Sigma,\F|_{\Sigma},g)$ are horizontal geodesics of $(\hat{\Sigma},\hat{\F}|_{\hat{\Sigma}},\hat{g})$.
\end{enumerate}
\end{prop}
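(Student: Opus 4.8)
The plan is to build the metric $\hat{g}$ on $\hat{N}$ by pulling back $\hat{g}^{M}$ of Proposition \ref{prop-metric-on-N-L} over $\hat{N}-\hat{\Sigma}$ (via the foliated diffeomorphism $\hat{\pi}$ of Proposition \ref{prop-blowup-N}(c)) and then checking that this metric extends smoothly across the exceptional divisor $\hat{\Sigma}=\mathbb{P}(\nu\Sigma)$. The key point is the factor $\tfrac{1}{\|\xi\|^{2}}$ appearing in \eqref{eq-metrica-gM}: in the blow-up coordinates, where $\|\xi\|$ becomes (a function of) the radial coordinate $t$, the rescaled $H^{2}$-directions are exactly the directions along the fibers of $\hat{\Sigma}\to\Sigma$, and the degenerate behavior of $\hat{g}^{M}$ near $\Sigma$ is precisely cancelled by the Jacobian of the blow-up map. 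So first I would write down explicit local coordinates on $\hat{N}$ adapted to the splitting $H^{\perp}\oplus H^{1}\oplus H^{2}\oplus H^{3}$, with $H^{1}$ the radial line $t=\|\xi\|$ and $H^{2}$ the directions tangent to the projective fiber, and verify that $\hat{\pi}^{*}\hat{g}^{M}$ has a smooth, nondegenerate limit as $t\to 0$; this defines $\hat{g}$.

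Next I would establish (a). Since $\hat{\pi}^{*}\hat{g}^{M}=\hat{g}$ off $\hat{\Sigma}$, a unit speed geodesic $\hat{\gamma}$ orthogonal to $\hat{\Sigma}$ projects, off the divisor, to a curve that is a $\hat{g}^{M}$-geodesic; by Proposition \ref{prop-metric-on-N-L} this is, up to the reparametrization issue on $(0,a]$, a unit speed $g$-geodesic orthogonal to $\Sigma$; by continuity of $\hat{\pi}$ and the smoothness of $\hat{g}$ the whole projected curve is a unit speed $g$-geodesic orthogonal to $\Sigma$ (one also uses that orthogonality to $\hat{\Sigma}$ forces $\hat{\gamma}$ to start in an $H^{1}$-type direction, which $\hat{\pi}$ sends to a normal direction to $\Sigma$). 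For (b), the restriction $\hat{\pi}|_{\hat{\Sigma}}$ is the canonical projection $\mathbb{P}(\nu\Sigma)\to\Sigma$; from the coordinate description of $\hat{g}$ its restriction to $\hat{\Sigma}$ splits as a fiber part (coming from the rescaled $H^{2}$) plus a horizontal part isometric via $\hat{\pi}$ to $g|_{\Sigma}$ (coming from $H^{\perp}$, recalling item (e) of Proposition \ref{prop-metrica-adaptada-tub}), so $\hat{\pi}|_{\hat{\Sigma}}$ is a Riemannian submersion.

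For (c), I would apply the criterion of Proposition \ref{rem-prop-metric-stratum} to $(\hat{N},\hat{\F},\hat{g})$: the strata of $\hat{\F}$ are submanifolds (they are the lifts of the strata of $\F$ together with $\hat{\Sigma}$ and the induced stratification there), plaques vary continuously, and $\hat{\F}$ restricted to each stratum is Riemannian — over $\hat{N}-\hat{\Sigma}$ this is Proposition \ref{prop-metric-on-N-L}, over $\hat{\Sigma}$ it is item (d), which I prove by the same Lie-derivative computation as in Proposition \ref{prop-metric-on-N-L} and Lemma \ref{prop-flat-metric} using that the rescaling function is leaf-constant (Proposition \ref{homothetic-lemma-stratum}) and that foliated fields tangent to a stratum and to $H^{i}$ stay so (Proposition \ref{cor-transnormal-system}); condition (d) of Proposition \ref{rem-prop-metric-stratum} follows from pulling back, via $\hat{\pi}$, the corresponding cylinder/distribution data for $\F$ with respect to $\hat{g}^{M}$. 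Finally (d) also needs the statement about liftings of horizontal geodesics of $(\Sigma,\F|_{\Sigma},g)$: a $g|_{\Sigma}$-horizontal geodesic lifts through the submersion $\hat{\pi}|_{\hat{\Sigma}}$ to a horizontal curve, and since $\hat{\pi}|_{\hat{\Sigma}}$ is a Riemannian submersion this horizontal lift is again a geodesic, and it is $\hat{\F}|_{\hat{\Sigma}}$-horizontal because $\hat{\pi}$ maps leaves to leaves and horizontal to horizontal. The main obstacle I expect is the first step: verifying carefully, in blow-up coordinates, that the $\tfrac{1}{\|\xi\|^{2}}$-rescaled metric $\hat{\pi}^{*}\hat{g}^{M}$ genuinely extends to a smooth nondegenerate metric across $\hat{\Sigma}$ and correctly identifying the limiting metric on the divisor — everything else is then a matter of transporting the already-established s.r.f.\ properties through $\hat{\pi}$ and invoking the criteria proved in Section \ref{sec-prop}.
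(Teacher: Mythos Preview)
Your overall strategy coincides with the paper's: define $\hat g=\hat\pi^{*}\hat g^{M}$ on $\hat N-\hat\Sigma$, show it extends smoothly across $\hat\Sigma$ (the paper does this via an explicit frame $\{\hat e_i\}$ and a limit, which is exactly your blow-up coordinate computation), read off (a) from Proposition~\ref{prop-metric-on-N-L} using that $\hat\Sigma$ has codimension one so the orthogonal direction is the radial one, get (b) from the coordinate description of $\hat g|_{\hat\Sigma}$, and prove (c) via the criterion of Proposition~\ref{rem-prop-metric-stratum}. Your argument for the horizontal-lift part of (d) via the Riemannian submersion is in fact equivalent to the paper's basis argument.

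There is one genuine gap. For item (c) you say that hypothesis (d) of Proposition~\ref{rem-prop-metric-stratum} at points of $\hat\Sigma$ ``follows from pulling back, via $\hat\pi$, the corresponding cylinder/distribution data for $\F$ with respect to $\hat g^{M}$.'' This does not work: $\hat\pi$ is not a diffeomorphism near $\hat\Sigma$, so cylinders $\partial\tub_t(\Sigma)$ in $M$ pull back to hypersurfaces that converge to all of $\hat\Sigma$, not to a curve $\beta$ through the given point $\hat q$. The paper handles this differently. Given $v\in T_{\hat q}\hat N$ orthogonal to $\hat L_{\hat q}$ and not tangent to its stratum, one writes $v=v_{\hat\Sigma}+k\,\hat\gamma'(0)$ with $v_{\hat\Sigma}\in T_{\hat q}\hat\Sigma$; since $\hat\gamma'(0)\perp\hat L_{\hat q}$ one gets $v_{\hat\Sigma}\perp\hat L_{\hat q}$, and then the already-established s.r.f.\ structure on $(\hat\Sigma,\hat\F|_{\hat\Sigma},\hat g)$ supplies the curve $\beta\subset\hat\Sigma$ and the distribution $V_t\subset T\hat\Sigma$ required in Proposition~\ref{rem-prop-metric-stratum}(d); orthogonality of $v$ to $V_0$ follows because $V_0\subset T_{\hat q}\hat\Sigma$ and $\hat\gamma'(0)\perp T_{\hat q}\hat\Sigma$. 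So the needed distribution is found \emph{inside} $\hat\Sigma$, not pulled back from $M$. Note also the logical order: the paper first proves that $(\hat\Sigma,\hat\F|_{\hat\Sigma})$ is a s.r.f.\ for \emph{some} metric (via radial projection from a level $\hat\pi^{-1}(\partial\tub_\epsilon(\Sigma))$), then checks $L_X\hat g_T=0$ on each stratum of $\hat\Sigma$, and only then invokes Proposition~\ref{prop-metric-stratum} to conclude the s.r.f.\ property for $\hat g$---this intermediate step is what makes your Lie-derivative argument on $\hat\Sigma$ legitimate.
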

\begin{proof}
Since $\hat{\pi}: \hat{N}-\hat{\Sigma}\rightarrow \tub_{r}(\Sigma)-\Sigma$ is  diffeomorphism, we can define a metric on $\hat{N}-\hat{\Sigma}$ as $\hat{g}:=\hat{\pi}^{*}\hat{g}^{M}$. We want to define a metric along $\hat{\Sigma}$. 

Let $\xi$ be a vector of $\nu\Sigma$ with $\|\xi\|$=1. Consider the curve $t\rightarrow \hat{\gamma}(t):=(\exp(t\xi),[\xi])$ on $\hat{N}$ and define the metric $\hat{g}$ so that $\hat{\gamma}'(0)$ is orthogonal to $\hat{\Sigma}$ and $\hat{g}(\hat{\gamma}'(0),\hat{\gamma}'(0))=1.$ 

Note that a vector $\hat{V}_{[\xi]}\in T_{[\xi]}\hat{\Sigma}$ is the radial projection on $\hat{\Sigma}$ of a vector
$V_{t}\in T_{\hat{\gamma}(t)}\hat{\pi}^{-1}( \partial \tub_{t}(\Sigma))$ and define  
 $\hat{g}(\hat{V}_{[\xi]},\hat{W}_{[\xi]}):=\lim_{t\rightarrow 0} \hat{g}(V_{t},W_{t}).$ 



\begin{lemma}
\label{lemma0-prop-metric-on-hatN}
 $\hat{g}$ is well defined and smooth. 
\end{lemma}
\begin{proof}
It is not difficult to check that $\hat{g}$ is well defined. In order to prove that $\hat{g}$ is smooth we must find a smooth local frame $\{\hat{e}_{i}\}$ in a neighborhood of a point $[\hat{\xi}_{0}]\in \hat{\Sigma}=\mathbb{P}(\nu\Sigma)$ and show that $\hat{g}(\hat{e}_{i},\hat{e}_{j})$ is smooth.  Set $q_{0}:=\hat{\pi}([\hat{\xi}_{0}]).$ Define smooth linearly independent  vector fields  $e_{1},\ldots, e_{k}$ orthogonal to the foliation $\{T\}$ where $T_{q}:=\exp_{q}(\nu(\Sigma)\cap B_{\epsilon}(0))$ for $q\in\Sigma$ near $q_{0}.$ One can construct  smooth vector fields $\hat{e}_{1},\ldots,\hat{e}_{k}$ in a neighborhood of $[\hat{\xi}_{0}]$ such that $d\hat{\pi}\hat{e}_{i}=e_{i}$. This can be done using  local coordinates of $\hat{N}$ and $\tub_{r}(\Sigma)$ and the fact that for each smooth function $a$ with $a(0,\theta)=0$ we can find a smooth function $b$ such that $a(r,\theta)=r b(r,\theta).$ We note that 
$
\hat{g}_{\hat{x}}(\hat{e}_{i},\hat{e}_{j})=\hat{g}^{M}_{\hat{\pi}(\hat{x})}(e_{i},e_{j})=\tilde{g}_{\hat{\pi}(\hat{x})}(e_{i},e_{j})
$ is smooth for $1\leq i,j\leq k$. 
Now one can find a smooth linearly independent vector fields $\hat{e}_{k+1},\ldots, \hat{e}_{n-1}$ in a neighborhood of $[\hat{\xi}_{0}]$ such that
$d\hat{\pi}\hat{e}_{\alpha}=d(\exp_{q})_{r\xi}(rv_{\alpha}^{\xi})$ where $v_{\alpha}^{\xi}\in\nu_{q}\Sigma$ is orthogonal to $\xi\in\nu_{q}\Sigma$ and depends smoothly on $\xi$, which is  near $\xi_{0}.$ We conclude that 
$\hat{g}_{\hat{x}}(\hat{e}_{\alpha},\hat{e}_{\beta})=\tilde{g}_{\hat{\pi}(\hat{x})}(d(\exp_{q})_{r\xi}(v_{\alpha}^
{\xi}),d(\exp_{q})_{r\xi}(v_{\beta}^{\xi}))$ is smooth for $k+1\leq \alpha, \beta<n-1$.
By construction, $\hat{g}(\hat{e}_{i},\hat{e}_{\alpha})=0$ for $1\leq i\leq k$ and $k+1\leq \alpha\leq n-1$. 
Finally one can define $\hat{e}_{n}$ as a vector field such that $d\hat{\pi}\hat{e}_{n}$ is tangent to unit speed geodesics orthogonal to $\Sigma$. It follows from the construction that $\hat{g}(\hat{e}_{n},\hat{e}_{n})=1$ and $\hat{g}(\hat{e}_{n},\hat{e}_{j})=0$ for $j\neq n$.

\end{proof}

Item (a) follows direct from Proposition \ref{prop-metric-on-N-L} and item (b) is proved below.

\begin{lemma}
\label{lemma1-prop-metric-on-hatN}
 $\hat{\pi}|_{\hat{\Sigma}}: (\hat{\Sigma},\hat{g})\rightarrow(\Sigma,g)$ is a Riemannian submersion. 
\end{lemma}
\begin{proof}

Let $v\in T_{q}\Sigma$ be a unitary vector with respect to $g$ and recall that $g|_{\Sigma}=\tilde{g}|_{\Sigma}.$ Let $\gamma$ be a geodesic orthogonal to  $\Sigma.$ Consider a  unitary vector field $t\rightarrow v(t)$ along $\gamma$   orthogonal to $T_{q}=\exp_{q}(\nu_{q}\Sigma\cap B_{\epsilon}(0))$ (with respect to $\tilde{g}$) so that $v(0)=v$. Let $\hat{\gamma}$ be the horizontal geodesic in $\hat{N}$ such that $\hat{\pi}(\hat{\gamma})=\gamma$ and $\hat{v}$ the vector field along $\hat{\gamma}$ such that $d\hat{\pi}\hat{v}=v.$ Note that $\hat{v}(0)$ is tangent to $\hat{\Sigma}$ and is orthogonal to $\hat{\pi}|_{\hat{\Sigma}}^{-1}(q)$.
Finally note that 
$ d\hat{\pi}\hat{v}(0)=\lim_{t\rightarrow 0}d\hat{\pi}\hat{v}(t)=v$ and 
$\hat{g}(\hat{v}(0),\hat{v}(0))=\lim_{t\rightarrow 0}\hat{g}(\hat{v}(t),\hat{v}(t))=1=g(v).$ The last two equations imply the result.
\end{proof}

Item (c) is proved in the next lemma. 
\begin{lemma}
\label{lemma2-prop-metric-on-hatN}
 $\hat{\F}$ is a s.r.f on $\hat{N}$ with respect to $\hat{g}.$ 

\end{lemma}

\begin{proof}
Proposition \ref{prop-metric-on-N-L} implies that $\hat{\F}$ is a s.r.f on $\hat{N}-\hat{\Sigma}$. Hence we have to prove that $\hat{\F}$ is a s.r.f in a  neighborhood of a point $\hat{q}\in\hat{\Sigma}$. In what follows we will prove that  item (d) of  Proposition \ref{rem-prop-metric-stratum} is satisfied. 

The fact that  $\hat{\F}$ is a s.r.f on $\hat{N}-\hat{\Sigma}$, Proposition \ref{rem-prop-metric-stratum} and radial projection on $\hat{C}:=\hat{\pi}^{-1}(\partial \tub_{\epsilon}(\Sigma))$ imply that $(\hat{\F},\hat{C},\hat{g})$ is a s.r.f.  The fact that $(\hat{\F},\hat{C},\hat{g})$ is a s.r.f and the radial projection on $\hat{\Sigma}$ imply the next claim.

\claim 1: \emph{$(\hat{\Sigma},\hat{\F}|_{\hat{\Sigma}})$ is a s.r.f with respect to some metric.}

Let $\hat{g}_{T}$ be the transverse metric (restrict to a stratum of $\hat{\Sigma}$). Note that the Lie derivative $L_{X}\hat{\metric}_{T}$ is zero for each $X\in \singularF$ tangent to $\hat{\Sigma}$, because each stratum of  
$(\hat{\Sigma},\hat{\F}|_{\hat{\Sigma}})$ is the intersection of the stratum of $\hat{\F}$ with $\hat{\Sigma}$ and $L_{X}\hat{\metric}_{T}$ is zero outside the manifold $\hat{\Sigma}$. This fact, Claim 1 and Proposition \ref{prop-metric-stratum} imply 

\claim 2: \emph{$(\hat{\Sigma},\hat{\F}|_{\hat{\Sigma}},\hat{g})$ is a s.r.f.} 

Consider a vector $v\in T_{\hat{q}}\hat{N}$ orthogonal
to $\hat{L}_{\hat{q}}$. We also suppose that $v$ is not tangent to the stratum that contains $\hat{q}$. 
Let $v_{\hat{\Sigma}}\in T_{\hat{q}}\hat{\Sigma}$ so that $v=v_{\hat{\Sigma}}+ k \hat{\gamma}'(0)$ (for a real number $k$). Since $v$ and $\hat{\gamma}'(0)$ are orthogonal to $\hat{L}_{\hat{q}}$ we conclude that

\claim 3: \emph{$v_{\hat{\Sigma}}$ is orthogonal to $\hat{L}_{\hat{q}}$.}

Claims 2 and 3 together with the fact that $v_{\hat{\Sigma}}$ is not tangent to the stratum (in $\hat{\Sigma}$) that contains $\hat{q}$ imply 

\claim 4: \emph{There exists a distribution $t\rightarrow V_{t}$ along a curve $\beta\subset \hat{\Sigma}$ that satisfies Item (d.1) of Proposition \ref{rem-prop-metric-stratum} and so that $v_{\Sigma}$ is orthogonal to $V_{0}$.}

Note that Item (d.2) of Proposition \ref{rem-prop-metric-stratum} is also satisfied, i.e., $v$ is orthogonal to $V_{0}$, since $V_{0}\subset T_{\hat{q}}\hat{\Sigma}$ and $v_{\Sigma}$ is orthogonal to $V_{0}$. 

It is not difficult to check the other items of Proposition \ref{rem-prop-metric-stratum} and hence this proposition guarantee that $\hat{\F}$ is a s.r.f in a neighborhood of $\hat{q}$.  
\end{proof}

In order to prove item (d) recall that we have already proved  in Claim 2 of  Lemma \ref{lemma2-prop-metric-on-hatN} that 
$(\hat{\Sigma},\hat{\F}|_{\hat{\Sigma}},\hat{g})$ is a s.r.f.

The fact that the distribution   $H^{\perp}$ is tangent to the leaves of $\F$ and the same arguments of Lemma \ref{lemma0-prop-metric-on-hatN} and 
Lemma \ref{lemma1-prop-metric-on-hatN} imply that, for $\hat{L}_{\hat{q}}\subset\hat{\Sigma}$, there exists a base $\hat{e}_{1},\ldots,  \hat{e}_{l}$ of $T_{\hat{q}}\hat{L}_{\hat{q}}$   such that
\begin{enumerate}
\item $\hat{e}_{1}, \ldots, \hat{e}_{k}\in \nu_{\hat{q}}\hat{\pi}^{-1}(q)$ and $d\hat{\pi}\hat{e}_{1},\ldots , d\hat{\pi}\hat{e}_{k} $ is a basis of $T_{q}L_{q}$ for $q=\hat{\pi}(\hat{q}).$
\item $\hat{e}_{k+1}, \ldots, \hat{e}_{l}\in T_{\hat{q}}\hat{\pi}^{-1}(q)$.
\end{enumerate}

Note that if $\hat{\alpha}$ is the lifting of a horizontal geodesic of $(\Sigma,\F|_{\Sigma},g)$, then $\hat{\alpha}'(0)$ is orthogonal to $\hat{e}_{i}$ for $i=1,\ldots, l$.  Therefore $\hat{\alpha}$ is orthogonal to $\hat{L}_{\hat{q}}$. Since  $(\hat{\Sigma},\hat{\F}|_{\hat{\Sigma}},\hat{g})$ is a s.r.f, $\hat{\alpha}$ is a horizontal geodesic of $(\hat{\Sigma},\hat{\F}|_{\hat{\Sigma}},\hat{g})$ and this finishes the proof of item (d).

\end{proof}


In the last step of our construction, we   glue $\hat{N}$ with a copy of $M-\tub_{r}(\Sigma)$, and hence we  construct the space 
$\hat{M}_{r}(\Sigma)$ and  the projection $\hat{\pi}_{r}:\hat{M}_{r}(\Sigma)\rightarrow M$  of  Theorem \ref{thm-Blowup-srf}. 
We can also induced the singular foliation  $\hat{\F}_{r}$ on $\hat{M}_{r}(\Sigma)$.
This procedure is analogous to the one used for blow-up of isometric actions (see Duistermaat and Kolk \cite[Section 2.9]{Duistermaat}).  

We must define the appropriate metric   $\hat{g}_{r}$ on  $\hat{M}_{r}(\Sigma)$. We need  a partition of unity of $\hat{M}_{r}(\Sigma)$ by 2 functions $\hat{f}$ and $\hat{h}$ such that  
 
 \begin{enumerate}
 \item[(a)] $\hat{f}=1$ in $\tub_{r/2}(\hat{\Sigma})$ and $\hat{f}=0$ outside of $\tub_{r}(\hat{\Sigma})$.
 \item[(b)] $\hat{f}$ and $\hat{h}$ are constant in the cylinders  $\partial \tub_{\epsilon}(\hat{\Sigma})$ for $ \epsilon<2 r.$ 
 \end{enumerate}
 
 With these 2 functions we can define $\hat{g}_{r}:=\hat{f}\hat{g}+\hat{h} g$ and use Proposition \ref{prop-metric-stratum} and item (c) of Proposition \ref{prop-metric-on-hatN}  to prove that $\hat{\F}_{r}$ is Riemannian with respect to $\hat{g}_{r}.$  items (a) and (b) of Theorem \ref{thm-Blowup-srf} follow by  construction. Item (c) of Theorem \ref{thm-Blowup-srf} follows from the fact that $\hat{f}$ and $\hat{h}$ are constant along the cylinders and from item (a) of Proposition \ref{prop-metric-on-hatN}.
Finally item (b) and (d) of Proposition \ref{prop-metric-on-hatN} implies item (d) of Theorem \ref{thm-Blowup-srf}.

We conclude this section with a remark that will be useful in the next section. 

\begin{rem}
\label{rem-metrica-hatg-tildeg}
Let $\hat{\beta}:[0,a]\rightarrow \hat{M}_{r}(\Sigma)$ be a minimal  segment of horizontal geodesic. First assume that 
$\hat{L}_{\hat{\beta}(0)}, \hat{L}_{\hat{\beta}(a)}\subset \hat{M}_{r}(\Sigma)-\hat{\Sigma}.$ Then $\hat{\beta}$ does not meet $\hat{\Sigma}.$ In addition,
\begin{equation}
\label{eq-rem-metrica-hatg-tildeg}
\tilde{g}_{r}(d\hat{\pi}_{r}(\hat{\beta}'(t)),d\hat{\pi}_{r}(\hat{\beta}'(t)))\leq \hat{g}_{r}(\hat{\beta}'(t),\hat{\beta}'(t)),
\end{equation}
for the metric
 $\tilde{g}_{r}:=f\tilde{g}+h g$ where $f$ and $h$ are defined as $\hat{f}=f\circ\hat{\pi}_{r}$ and $\hat{g}=g\circ\hat{\pi}_{r}$.
Note that, by continuity,  equation (\ref{eq-rem-metrica-hatg-tildeg}) is also valid if 
$\hat{L}_{\hat{\beta}(0)}, \hat{L}_{\hat{\beta}(a)}\subset \hat{\Sigma}.$


\end{rem} 


\section{Proof of Theorem \ref{cor-epsilon-isometria}}

In order to prove Theorem \ref{cor-epsilon-isometria} it suffices to prove the next result.

\begin{prop}
\label{prop-epsilon-isometricos}
Let $\F$ be a singular Riemannian foliation with compact leaves on a compact Riemannian manifold $(M,g)$
and  $\Sigma$ the minimal stratum of $\F$. Consider the Riemannian manifold  $(\hat{M}_{r}(\Sigma),\hat{g}_{r})$   and  the foliation $\hat{\F}_{r}$ defined in Theorem  \ref{thm-Blowup-srf}. Then for each small $\epsilon>0$ there exists $r$ such that 
 for each $\hat{q}, \hat{p}\in \hat{M}_{r}(\Sigma)$ we have 
\[ |d(L_{q},L_{p})-\hat{d}_{r}(\hat{L}_{\hat{q}},\hat{L}_{\hat{p}})|<\epsilon\]
where $p:=\hat{\pi}_{r}(\hat{p})$ and $q:=\hat{\pi}_{r}(\hat{q}).$
\end{prop}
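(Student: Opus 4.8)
The plan is to prove the two estimates
$\hat{d}_{r}(\hat{L}_{\hat{q}},\hat{L}_{\hat{p}})\ge d(L_{q},L_{p})$ and $\hat{d}_{r}(\hat{L}_{\hat{q}},\hat{L}_{\hat{p}})\le d(L_{q},L_{p})+\epsilon$ (the latter for $r$ small in terms of $\epsilon$), which together give the statement. I work with the compact length spaces $X:=M/\F$ and $\hat{X}_{r}:=\hat{M}_{r}(\Sigma)/\hat{\F}_{r}$ (all leaves being compact) and the surjection $\bar{\pi}_{r}\colon\hat{X}_{r}\to X$ induced by $\hat{\pi}_{r}$; minimal horizontal geodesics realizing leaf distances exist upstairs and downstairs by compactness of the leaves.

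\emph{Lower bound.} Take a minimal horizontal geodesic $\hat{\beta}$ in $(\hat{M}_{r}(\Sigma),\hat{g}_{r})$ between $\hat{L}_{\hat{q}}$ and $\hat{L}_{\hat{p}}$ and push it down by $\hat{\pi}_{r}$ (a limiting argument reduces to the case where neither leaf lies in $\hat{\Sigma}$). By Remark \ref{rem-metrica-hatg-tildeg}, $\mathrm{length}_{\tilde{g}_{r}}(\hat{\pi}_{r}\circ\hat{\beta})\le\mathrm{length}_{\hat{g}_{r}}(\hat{\beta})=\hat{d}_{r}(\hat{L}_{\hat{q}},\hat{L}_{\hat{p}})$. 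Since $\tilde{g}$ has the same transverse metric as $g$ (Proposition \ref{prop-metrica-adaptada-tub}(b)) and $\tilde{g}_{r}=f\tilde{g}+h\,g$ is a pointwise convex combination, the transverse metric of $\tilde{g}_{r}$ dominates that of $g$; hence $\mathrm{length}_{\tilde{g}_{r}}(\hat{\pi}_{r}\circ\hat{\beta})\ge d(L_{q},L_{p})$, which is the lower bound.

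\emph{Upper bound.} Here one uses that $\hat{\pi}_{r}$ is an isometry over $M\setminus\tub_{r}(\Sigma)$ (Theorem \ref{thm-Blowup-srf}(b)); that the distance-to-$\Sigma$ function and its lift are unit speed along radial rays in both metrics (Proposition \ref{prop-metric-on-N-L} together with the construction of $\hat{g}_{r}$ from $\hat{g}$ and $g$), so a radial segment of length $\le r$ joins any point of $\tub_{r}(\hat\Sigma)$ to $\hat{\Sigma}$ and is carried by $\hat{\pi}_{r}$ to one of length $\le r$ (Theorem \ref{thm-Blowup-srf}(c)); that $\hat{\pi}_{r}$ induces an isometry $\hat{\Sigma}/\hat{\F}|_{\hat{\Sigma}}\to\Sigma/\F|_{\Sigma}$ (Proposition \ref{prop-metric-on-hatN}(d) and compactness of leaves); and that, by the local product structure of $\F$ near $\Sigma$ (Propositions \ref{cor-transnormal-system}, \ref{homothetic-lemma-stratum}) and compactness of $M$, the radial foot-point projection descends to a $(1+Cr)$-Lipschitz map $\tub_{r}(\Sigma)/\F\to\Sigma/\F|_{\Sigma}$ whose fibers have diameter $\le Cr$, with the corresponding statement for $\hat{X}_{r}$ (the blown-up directions near $\hat{\Sigma}$ still have $O(r)$ extent at $\partial\tub_{r}(\hat\Sigma)$). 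Given these, fix a minimal horizontal geodesic $\gamma$ in $M$ from $L_{q}$ to $L_{p}$ and cut it along $\partial\tub_{r}(\Sigma)$; lift the pieces lying in $M\setminus\interior\tub_{r}(\Sigma)$ isometrically, and for a piece lying inside the tube, from $a$ to $b$ on $\partial\tub_{r}(\Sigma)$, replace it in $\hat{M}_{r}(\Sigma)$ by the concatenation of the radial segment from $\hat{a}$ to $\hat{\Sigma}$ (length $\le r$), a minimal horizontal geodesic of $(\hat{\Sigma},\hat{\F}|_{\hat{\Sigma}},\hat{g}_{r})$ to the point over the foot-point of $b$ (length $\le(1+Cr)\,\mathrm{length}_{g}(\gamma|_{[a,b]})$ by the isometry of leaf spaces and the Lipschitz bound), and the radial segment back out to $\hat{b}$ (length $\le r$). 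Concatenating, $\hat{d}_{r}(\hat{L}_{\hat{q}},\hat{L}_{\hat{p}})\le(1+Cr)\,d(L_{q},L_{p})+2rN$, where $N$ is the number of tube–excursions of $\gamma$.

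The step I expect to be the main obstacle is exactly making the tube term genuinely $O(r)$, i.e.\ preventing $2rN$ from surviving when $N$ is large. I would handle this by not leaving $\hat{\Sigma}$ between consecutive excursions: one replaces the entire part of $\gamma$ from its first entrance into $\tub_{r}(\Sigma)$ to its last exit by a \emph{single} detour through $\hat{\Sigma}$, and estimates its cost by exceeding $\mathrm{length}_{g}$ of that part by only $O(r)$, using the $(1+Cr)$-bi-Lipschitz comparison of the $\Sigma/\F|_{\Sigma}$-directions of $\tub_{r}(\Sigma)/\F$ and $\hat{X}_{r}$ together with the $O(r)$-smallness of the complementary (fiber) directions; that the pieces of $\gamma$ lying outside $\tub_{r}(\Sigma)$ \emph{between} dips contribute negligibly is the delicate point, and I expect it to follow from the transversal structure near $\Sigma$ (Corollary \ref{singular-points-isolated}) applied to the minimal geodesic. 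This gives $\hat{d}_{r}(\hat{L}_{\hat{q}},\hat{L}_{\hat{p}})\le(1+Cr)\,d(L_{q},L_{p})+Cr$; since $d(L_{q},L_{p})\le\mathrm{diam}(X)$, choosing $r$ with $Cr(\mathrm{diam}(X)+1)<\epsilon$ finishes the proof.
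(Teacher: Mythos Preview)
Your lower bound $d(L_q,L_p)\le \hat d_r(\hat L_{\hat q},\hat L_{\hat p})$ is fine and in fact cleaner than the paper's: the paper obtains only $d\le \hat d_r+\epsilon$ by building the auxiliary curve $\hat\gamma^{\epsilon}$ of Lemma~\ref{lemma-fundamental-thm-gromov-hausdorff-convergence-sigma}, whereas you get the sharp inequality directly from Remark~\ref{rem-metrica-hatg-tildeg} together with $(\tilde g_r)_T\ge g_T$ (which follows from $\tilde g_T=g_T$ and the elementary $\min_w\big(f\,\tilde g(v-w,v-w)+h\,g(v-w,v-w)\big)\ge f\,\tilde g_T(v)+h\,g_T(v)$).

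The upper bound, however, has a real gap, and your suggested repair does not close it. Two problems. First, Corollary~\ref{singular-points-isolated} concerns horizontal geodesics \emph{tangent to a stratum} and says their exits from that stratum are isolated; it tells you nothing about how often an arbitrary minimal horizontal geodesic crosses $\partial\tub_r(\Sigma)$, nor that the portions of $\gamma$ outside $\tub_r(\Sigma)$ between two visits are short. Second, even if you try a single detour from the first entrance $a$ to the last exit $b$, you need $d_{\Sigma/\F}(\mathrm{foot}(\gamma(a)),\mathrm{foot}(\gamma(b)))\le (1+Cr)\,l(\gamma|_{[a,b]})$. But for leaves $L_1,L_2\subset\Sigma$ one does \emph{not} have $d_\Sigma(L_1,L_2)\le C\,d_M(L_1,L_2)$ globally: a minimal horizontal geodesic in $M$ between leaves of $\Sigma$ may leave $\Sigma$, so the intrinsic $\Sigma$-distance can be strictly larger. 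Your $(1+Cr)$-Lipschitz estimate for the foot-point projection is only a local statement.

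The paper circumvents this by never counting tube--crossings. It fixes $\epsilon_0>0$ with the property that leaves in $\Sigma$ at $M$-distance $<\epsilon_0$ are joined by a minimal horizontal geodesic \emph{inside} $\Sigma$, shows (Claim~1 of Lemma~\ref{lemma-fundamental-thm-gromov-hausdorff-convergence-sigma}) that the diameter of the leaf space is bounded by some $K$ uniformly in $r$, and then chooses a partition of $\gamma$ into $N\le N_0:=\lceil 3K/\epsilon_0\rceil$ pieces of length $\le \epsilon_0/3$. On each piece meeting the tube one makes a local detour through $\hat\Sigma$ with error $\le \epsilon_1+4r$ (using Claim~2 there, i.e.\ the local comparison along $\Sigma$ that your $(1+Cr)$ was aiming at). The total error is $N_0(\epsilon_1+4r)$ with $N_0$ independent of $r$, so one can make it $<\epsilon$. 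That fixed bound $N_0$, coming from the convexity radius $\epsilon_0$ of $\Sigma$ inside $M$, is the missing idea in your argument.
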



 The proof of  Proposition \ref{prop-epsilon-isometricos}  will require the next two lemmas.


\begin{lemma}
\label{lemma-fundamental-thm-gromov-hausdorff-convergence-sigma}

For  each small $\epsilon>0$ there exists $r$ so that if $\hat{\gamma}:[0,R]\rightarrow \hat{M}_{r}(\Sigma)$ is a unit speed minimal horizontal  geodesic  (with respect to $\hat{g}_{r}$)  then there exists a piecewise smooth horizontal curve $\hat{\gamma}^{\epsilon}:[0,R^{\epsilon}]\rightarrow  \hat{M}_{r}(\Sigma)$ and a partition $0=\tau_{1}<\ldots <\tau_{m}=R^{\epsilon}$ with the following properties:
\begin{enumerate}
\item[(a)] $|\hat{l}_{r}(\hat{\gamma}^{\epsilon})-\hat{l}_{r}(\hat{\gamma})|<\epsilon.$
\item[(b)] For each $[\tau_{i},\tau_{i+1}]$ one of the following conditions is fulfilled
\begin{enumerate}
\item[(1)] $\hat{\gamma}^{\epsilon}|_{[\tau_{i},\tau_{i+1}]}$ is a segment of $\hat{\gamma}$ and $\hat{\gamma}^{\epsilon}|_{[\tau_{i},\tau_{i+1}]}\cap \tub_{r}(\hat{\Sigma})=\emptyset$ or
\item[(2)]$\hat{\gamma}^{\epsilon}|_{[\tau_{i},\tau_{i+1}]}$ is a horizontal geodesic orthogonal to $\hat{\Sigma}$ or
\item[(3)]$\hat{\gamma}^{\epsilon}|_{[\tau_{i},\tau_{i+1}]}\subset \hat{\Sigma}$, 
$\hat{l}_{r}(\hat{\gamma}^{\epsilon}|_{[\tau_{i},\tau_{i+1}]})=d(L_{\hat{\pi}_{r}(\hat{\gamma}(\tau_{i}))},L_{\hat{\pi}_{r}(\hat{\gamma}(\tau_{i+1}))}).$
\end{enumerate}
\end{enumerate}
\end{lemma}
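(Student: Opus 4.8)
The plan is to take the given minimal horizontal geodesic $\hat\gamma$ and surgically replace the portion of it that lies inside the tube $\tub_r(\hat\Sigma)$, where the metric $\hat g_r$ has been badly distorted by the blow-up, with a controlled curve that runs along $\hat\Sigma$ and along geodesics orthogonal to $\hat\Sigma$. First I would consider the connected components of $\hat\gamma^{-1}\big(\tub_r(\hat\Sigma)\big)$; by compactness and minimality there are finitely many, say over intervals $[a_j,b_j]$, and on each such interval $\hat\gamma$ enters and leaves the tube crossing the cylinders $\partial\tub_\epsilon(\hat\Sigma)$. The idea is to keep the segments of $\hat\gamma$ that avoid $\tub_r(\hat\Sigma)$ untouched (these are the pieces satisfying condition (b)(1)), and on each piece inside the tube, replace $\hat\gamma|_{[a_j,b_j]}$ by the concatenation of: a horizontal geodesic orthogonal to $\hat\Sigma$ running inward from $\hat\gamma(a_j)$ to $\hat\Sigma$ (type (b)(2)), then a minimizing horizontal curve inside $(\hat\Sigma,\hat\F_r|_{\hat\Sigma})$ connecting the two footpoints (type (b)(3)), then a horizontal geodesic orthogonal to $\hat\Sigma$ running back out to $\hat\gamma(b_j)$ (type (b)(2) again). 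Item (c) of Proposition~\ref{prop-metric-on-hatN} guarantees such orthogonal geodesics exist, and item (d) of the same proposition, which says $(\hat\Sigma,\hat\F_r|_{\hat\Sigma},\hat g_r)$ is a s.r.f., guarantees the middle horizontal curve realizing the leaf distance exists. The horizontality of the whole concatenation follows because orthogonal geodesics stay horizontal (Proposition~\ref{homothetic-lemma-stratum} and item (c) of Theorem~\ref{thm-Blowup-srf}) and the curve inside $\hat\Sigma$ is horizontal there.

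The heart of the argument is the length estimate (a). Here I would argue that the diameter of $\tub_r(\hat\Sigma)$ measured along horizontal directions transverse to $\hat\Sigma$ can be made uniformly small: a point of $\partial\tub_\epsilon(\hat\Sigma)$ is connected to $\hat\Sigma$ by an orthogonal geodesic of $\hat g_r$-length comparable to $\epsilon$ (using that $\hat f=\hat h$-weighting is controlled for $\epsilon<r/2$ where $\hat g_r=\hat g$, and the definition of $\hat g$ along $\hat\Sigma$ via the limit construction). Thus each inserted pair of orthogonal geodesic segments has total length $O(r)$. On the other hand, the part of $\hat\gamma|_{[a_j,b_j]}$ that we discard also has length at least the distance it travels in and out, and since $\hat\gamma$ is minimal, the middle curve inside $\hat\Sigma$ — chosen to realize the leaf distance $d(L_{\hat\pi_r(\hat\gamma(a_j))},L_{\hat\pi_r(\hat\gamma(b_j))})$, which by item (b) of Theorem~\ref{thm-Blowup-srf} and the fact that $\hat\pi_r|_{\hat\Sigma}$ is a Riemannian submersion onto $(\Sigma,g)$ with transverse metric controlled — is not longer than $\hat l_r(\hat\gamma|_{[a_j,b_j]})$ up to an $O(r)$ error coming from the tube radius. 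Summing the finitely many (at most $N=N(\epsilon)$, bounded using minimality and compactness of $M$) contributions and choosing $r$ small enough that $N\cdot O(r)<\epsilon$ gives (a). I would also need $\hat\gamma^\epsilon$ to remain in $\hat M_r(\Sigma)$, which is automatic since all inserted pieces lie in $\tub_r(\hat\Sigma)\cup\hat\Sigma$.

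The main obstacle I anticipate is controlling the number of excursions of $\hat\gamma$ into $\tub_r(\hat\Sigma)$ uniformly in $r$ and $\hat\gamma$: a priori a minimal geodesic could oscillate in and out of the tube many times, and each surgery introduces an error. I would handle this by first observing that between two consecutive excursions $\hat\gamma$ must travel a definite $\hat g_r$-distance (at least $2(r-r/2)=r$, since it must cross the annular region $\tub_r(\hat\Sigma)\setminus\tub_{r/2}(\hat\Sigma)$ twice), so the number of excursions is bounded by $\hat l_r(\hat\gamma)/r + 1 \le \mathrm{diam}_{\hat g_r}(\hat M_r(\Sigma))/r + 1$; but $\mathrm{diam}_{\hat g_r}$ itself could blow up as $r\to0$, so more care is needed. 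A cleaner route is to use equifocality (Theorem~\ref{thm-s.r.f.-equifocal}) and Corollary~\ref{singular-points-isolated} to show that a minimal horizontal geodesic of $\hat g_r$ meets $\hat\Sigma$ in isolated points, and to exploit that outside $\tub_r(\hat\Sigma)$ the metric $\hat g_r$ equals the fixed original metric $g$ of Theorem~\ref{thm-Blowup-srf}(b), whose diameter is a fixed constant independent of $r$; this bounds the discarded part of $\hat\gamma$ and hence the number of surgeries by a constant depending only on $M$ and $g$, after which the $O(r)$ errors can be absorbed. Making this counting rigorous — and verifying carefully that the inserted orthogonal geodesics, which are geodesics for $\hat g = \hat\pi_r^*\hat g^M$ away from $\hat\Sigma$ by Proposition~\ref{prop-metric-on-N-L}, glue smoothly at the cylinders where $\hat f,\hat h$ are locally constant — is where I expect the real work to lie.
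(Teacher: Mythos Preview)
Your surgery construction --- keep $\hat\gamma$ outside the tube, and inside replace each excursion by an orthogonal geodesic in, a horizontal curve along $\hat\Sigma$, and an orthogonal geodesic out --- is the same as the paper's. But the obstacle you flag, bounding the number of surgeries uniformly in $r$ and $\hat\gamma$, is a genuine gap, and neither of your proposed fixes closes it. Your first attempt is incorrect: an excursion into $\tub_r(\hat\Sigma)$ need not reach $\tub_{r/2}(\hat\Sigma)$, so it can be arbitrarily short, and the segments between excursions can likewise be arbitrarily short; no annulus crossing is forced. Your second attempt does not work either: isolated intersections with $\hat\Sigma$ say nothing about how often $\hat\gamma$ crosses $\partial\tub_r(\hat\Sigma)$, and a bound on the total length of $\hat\gamma$ (even if you had one) does not bound the number of connected components of $\hat\gamma^{-1}(\tub_r(\hat\Sigma))$.

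The paper sidesteps the counting problem altogether. First it establishes a uniform diameter bound: using items (c) and (d) of Theorem~\ref{thm-Blowup-srf} and the boundedness of $M/\F$ and $\Sigma/\F$, there is a constant $K$ independent of $r$ with $R<K$. It then fixes a scale $\epsilon_0>0$ (depending only on $(\Sigma,\F|_\Sigma,g)$) such that any two leaves in $\Sigma$ at $d$-distance $<\epsilon_0$ are joined by a minimal horizontal geodesic contained in $\Sigma$. The key move is to choose a partition $0=t_0<\cdots<t_N=R$ with $t_{i+1}-t_i\le\epsilon_0/3$ and $N\le N_0$, where $N_0$ depends only on $K$ and $\epsilon_0$ and is therefore \emph{independent of $r$ and $\hat\gamma$}. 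On each $[t_i,t_{i+1}]$ the paper performs at most one surgery, using only the first entry $a_i$ and last exit $b_i$ from $\overline{\tub_r(\hat\Sigma)}$ within that subinterval, regardless of how many times $\hat\gamma$ actually crosses the boundary there. Each surgery costs at most $\epsilon_1+4r$ (the $\epsilon_1$ coming from a separate comparison, via Remark~\ref{rem-metrica-hatg-tildeg}, between $\hat d_r$-distances in $\hat\Sigma$ and $d$-distances in $\Sigma$; the smallness $\le\epsilon_0/3$ of the subintervals is what makes this comparison applicable and allows the curve $\hat\alpha_i\subset\hat\Sigma$ realizing condition (b)(3) to be built by lifting from $\Sigma$ via item (d) of Theorem~\ref{thm-Blowup-srf}). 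Since $N_0$ is fixed, the total error $N_0(\epsilon_1+4r)$ is made $<\epsilon$ by shrinking $\epsilon_1$ and $r$.
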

\begin{proof}
Items (c) and (d) of Theorem \ref{thm-Blowup-srf} and the fact that $M/\F$ and $\Sigma/\F$ are bounded imply that there exists $K>0$ so that, for each $r$ the diameter of $\hat{M}_{r}(\Sigma)/\hat{\F}_{r}$ is lower than $K$. In particular we have

\claim 1: \emph{$R<K$ for each $r$.} 

Let $\epsilon_{0}$ be a radius so that if $L_{p}, L_{q}\subset \Sigma$ and $d(L_{p},L_{q})<\epsilon_{0}$ then each minimal segment of horizontal geodesic that joins $L_{p}$ to $L_{q}$ is contained in $\Sigma.$

\claim 2:\emph{ For   $\epsilon_{1}<\epsilon_{0}/3$ we can find a small $r$ such that if $\hat{d}_{r}(\hat{L}_{\hat{p}},\hat{L}_{\hat{q}})<2\epsilon_{0}/3$ and $\hat{p}, \hat{q}\in \hat{\Sigma}$ then 
\[ |d(L_{\hat{\pi}_{r}(\hat{p})},L_{\hat{\pi}_{r}(\hat{q})})-\hat{d}_{r}(\hat{L}_{\hat{p}},\hat{L}_{\hat{q}})|< \epsilon_{1}. \]
}

In order to prove Claim 2, let $\hat{\beta}$ be a minimal segment of horizontal geodesic that joins $\hat{L}_{\hat{p}}$ to $\hat{L}_{\hat{q}}$. It follows from Remark \ref{rem-metrica-hatg-tildeg} that 
\begin{equation}
\label{eq-1-lemma-fundamental-thm-gromov-hausdorff-convergence-sigma}
\tilde{l}_{r}(\hat{\pi}_{r}(\hat{\beta}))\leq \hat{l}_{r}(\hat{\beta}).
\end{equation}

Given $\epsilon_{1}$ we can find  $r$, that does not depend on $\hat{\beta}$, so that
\begin{equation}
\label{eq-2-lemma-fundamental-thm-gromov-hausdorff-convergence-sigma}
l(\hat{\pi}_{r}(\hat{\beta}))\leq \tilde{l}_{r}(\hat{\pi}_{r}(\hat{\beta}))+\epsilon_{1}.
\end{equation}
In fact the above equation can be proved using Claim 1 and reducing $r$ in such a way that
 the distribution $D$ defined in Proposition \ref{prop-metrica-adaptada-tub} turns out to be close to the distribution $H$.

Therefore 
\begin{eqnarray*}
d(L_{\hat{\pi}_{r}(\hat{p})},L_{\hat{\pi}_{r}(\hat{q})}) &\leq & l(\hat{\pi}_{r}(\hat{\beta}))\\
                                                         &\leq & \tilde{l}_{r}(\hat{\pi}_{r}(\hat{\beta}))+\epsilon_{1}\\
                                                         &\leq & \hat{l}_{r}(\hat{\beta})+ \epsilon_{1},
\end{eqnarray*}
and hence
\begin{equation}
\label{eq-3-lemma-fundamental-thm-gromov-hausdorff-convergence-sigma}
d(L_{\hat{\pi}_{r}(\hat{p})},L_{\hat{\pi}_{r}(\hat{q})}) \leq \hat{d}_{r}(\hat{L}_{\hat{p}},\hat{L}_{\hat{q}})+ \epsilon_{1}<\epsilon_{0}.
\end{equation}

Item (d) of Theorem \ref{thm-Blowup-srf}, equation (\ref{eq-3-lemma-fundamental-thm-gromov-hausdorff-convergence-sigma}) and the definition of $\epsilon_{0}$ imply
\begin{equation}
\label{eq-4-lemma-fundamental-thm-gromov-hausdorff-convergence-sigma}
\hat{d}_{r}(\hat{L}_{\hat{p}},\hat{L}_{\hat{q}})\leq d(L_{\hat{\pi}_{r}(\hat{p})},L_{\hat{\pi}_{r}(\hat{q})}).
\end{equation}

Equations (\ref{eq-3-lemma-fundamental-thm-gromov-hausdorff-convergence-sigma}) and (\ref{eq-4-lemma-fundamental-thm-gromov-hausdorff-convergence-sigma}) imply
\begin{equation*}
0\leq d(L_{\hat{\pi}_{r}(\hat{p})},L_{\hat{\pi}_{r}(\hat{q})})-\hat{d}_{r}(\hat{L}_{\hat{p}},\hat{L}_{\hat{q}})\leq \epsilon_{1}
\end{equation*}
and this conclude the proof of Claim 2.

Now consider a integer $N_{0}$ such that $K<N_{0}\frac{\epsilon_{0}}{3}$ and a partition $0=t_{0}<\cdots < t_{N}=R$ so that $N\leq N_{0}$ and $t_{i}-t_{i-1}\leq \frac{\epsilon_{0}}{3}$. 

If $\hat{\gamma}|_{[t_{i},t_{i+1}]}\cap\tub_{r}(\hat{\Sigma})=\emptyset$ then we define   $\hat{\gamma}^{\epsilon}_{i}:= \hat{\gamma}([t_{i},t_{i+1}])$.

Now assume that $\hat{\gamma}([t_{i},t_{i+1}]) \cap \tub_{r}(\hat{\Sigma}) \neq \emptyset$. 
We will replace $\hat{\gamma}([t_{i},t_{i+1}])$ by a  piecewise smooth horizontal curve $\hat{\gamma}^{\epsilon}_{i}$ that fulfills item (b) and has length close to $\hat{\gamma}([t_{i},t_{i+1}])$.

Let $a_{i}$ be the smallest number in $[t_{i},t_{i+1}]$ such that $\hat{\gamma}(a_{i})\in\overline{\tub_{r}(\hat{\Sigma})}$ and $b_{i}$ the biggest number in $[t_{i},t_{i+1}]$ such that 
$\hat{\gamma}(b_{i})\in\overline{\tub_{r}(\hat{\Sigma})}$.
Let $\hat{\sigma}_{i}$ and $\hat{\sigma}_{i+1}$ be  the projection of $\hat{\gamma}(a_{i})$ and $\hat{\gamma}(b_{i})$ in $\hat{\Sigma}$. 

By triangle inequality we have

\claim 3:\emph{ $|\hat{d}_{r}(\hat{L}_{\hat{\sigma}_{i}},\hat{L}_{\hat{\sigma}_{i+1}})-\hat{d}_{r}(\hat{L}_{\hat{\gamma}(a_{i})},\hat{L}_{\hat{\gamma}(b_{i})})|\leq 2r$. In particular for $r<\epsilon_{0}/6$ we have $\hat{d}_{r}(\hat{L}_{\hat{\sigma}_{i}},\hat{L}_{\hat{\sigma}_{i+1}})<2\epsilon_{0}/3<\epsilon_{0}.$ }

Let $\hat{\beta}_{i}$ be a minimal segment of horizontal geodesic in $\hat{M}_{r}(\Sigma)$ that joins $\hat{L}_{\hat{\sigma}_{i}}$ to $\hat{L}_{\hat{\sigma}_{i+1}}.$ By item (d) of Theorem \ref{thm-Blowup-srf} and Claim 2 we can find a horizontal curve $\hat{\alpha}_{i}\subset \hat{\Sigma}$ joining $\hat{L}_{\hat{\sigma}_{i}}$ to $\hat{L}_{\hat{\sigma}_{i+1}}$ such that 
$\hat{l}_{r}(\hat{\alpha}_{i})=d(L_{\hat{\pi}_{r}(\hat{\sigma}_{i})},L_{\hat{\pi}_{r}(\hat{\sigma}_{i+1})} ).$ By Claim 2 we have

\[ |\hat{l}_{r}(\hat{\alpha}_{i})-\hat{l}_{r}(\hat{\beta}_{i})|<\epsilon_{1}.\]

Claim 3 implies
\[ | \hat{l}_{r}(\hat{\beta}_{i})-\hat{l}_{r}(\hat{\gamma}|_{[a_{i},b_{i}]})  |\leq 2r .\]

Therefore

\[ | \hat{l}_{r}(\hat{\alpha}_{i})- \hat{l}_{r}(\hat{\gamma}|_{[a_{i},b_{i}]}) | \leq \epsilon_{1}+2r .\]

Let $\hat{\delta}_{i}$ (respectively $\hat{\delta}_{i+1}$) be a segment of horizontal geodesic perpendicular to $\hat{\Sigma}$ that joins $\hat{L}_{\hat{\sigma}_{i}}$ to $\hat{L}_{\hat{\gamma}(a_{i})}$ (respectively  $\hat{L}_{\hat{\sigma}_{i+1}}$ to $\hat{L}_{\hat{\gamma}(b_{i})}$).

Define 
$\hat{\gamma}^{\epsilon}_{i}:= \hat{\gamma}([t_{i},a_{i}])\cup \hat{\delta}_{i}^{-1}\cup \hat{\alpha}_{i}\cup \hat{\delta}_{i+1}\cup \hat{\gamma}([b_{i},t_{i+1}])$ and note that
\[ | \hat{l}_{r}(\hat{\gamma}^{\epsilon}_{i}) -\hat{l}_{r}(\hat{\gamma}|_{[t_{i},t_{i+1}]})|<\epsilon_{1}+ 2r+2r .\]

Finally setting $\hat{\gamma}^{\epsilon}:=\hat{\gamma}^{\epsilon}_{1}\cup\cdots\cup \hat{\gamma}^{\epsilon}_{N}$ we have

\[ |\hat{l}_{r}(\hat{\gamma}^{\epsilon})-\hat{l}_{r}(\gamma)|< N(\epsilon_{1}+4r)\leq N_{0}(\epsilon_{1}+4r).\]

Since $N_{0}$ does not depend on $r$ or $\hat{\gamma}$ the above equation implies item (a) of the lemma and this conclude the proof.

\end{proof}


Using some arguments of the above lemma we can also prove the next result. 

\begin{lemma}
\label{lemma-suplementar-thm-gromov-hausdorff-convergence-sigma}

For  each small $\epsilon>0$ there exists $r$ so that if $\gamma:[0,R]\rightarrow M$ is a unit speed minimal horizontal  geodesic  (with respect to $g$)  then there exists a piecewise smooth horizontal curve $\gamma^{\epsilon}:[0,R^{\epsilon}]\rightarrow  M$ and a partition $0=\tau_{1}<\ldots <\tau_{m}=R^{\epsilon}$ with the following properties:
\begin{enumerate}
\item[(a)] $|l(\gamma^{\epsilon})-l(\gamma)|<\epsilon.$
\item[(b)] For each $[\tau_{i},\tau_{i+1}]$ one of the following conditions is fulfilled
\begin{enumerate}
\item[(1)] $\gamma^{\epsilon}|_{[\tau_{i},\tau_{i+1}]}$ is a segment of $\gamma$ and $\gamma^{\epsilon}|_{[\tau_{i},\tau_{i+1}]}\cap \tub_{r}(\Sigma)=\emptyset$ or
\item[(2)]$\gamma^{\epsilon}|_{[\tau_{i},\tau_{i+1}]}$ is a horizontal geodesic orthogonal to $\Sigma$ or
\item[(3)]$\gamma^{\epsilon}|_{[\tau_{i},\tau_{i+1}]}\subset \Sigma$, 
$l(\gamma^{\epsilon}|_{[\tau_{i},\tau_{i+1}]})=d(L_{\gamma(\tau_{i})},L_{\gamma(\tau_{i+1})}).$
\end{enumerate}
\end{enumerate}

\end{lemma}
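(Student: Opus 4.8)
The plan is to run the proof of Lemma~\ref{lemma-fundamental-thm-gromov-hausdorff-convergence-sigma} again, this time directly in $(M,g)$ rather than in $(\hat M_{r}(\Sigma),\hat g_{r})$. The argument becomes shorter, since everything now takes place for the original metric: the curve produced inside $\Sigma$ is a genuine minimal horizontal geodesic of $(M,g)$, so there is no comparison of metrics on a tubular neighbourhood and no analogue of Claim~2 (hence no auxiliary error $\epsilon_{1}$).

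I would first fix, exactly as in the previous lemma, a radius $\epsilon_{0}>0$ such that whenever $L_{p},L_{q}\subset\Sigma$ and $d(L_{p},L_{q})<\epsilon_{0}$ every minimal horizontal geodesic of $(M,g)$ joining $L_{p}$ to $L_{q}$ is contained in $\Sigma$; this depends only on $\F$ and $g$. Since $M$ is compact, $K:=\operatorname{diam}(M/\F)<\infty$, and a unit speed minimal horizontal geodesic $\gamma\colon[0,R]\to M$ satisfies $R=d(L_{\gamma(0)},L_{\gamma(R)})\le K$. I then pick an integer $N_{0}$ with $K<N_{0}\epsilon_{0}/3$ and a partition $0=t_{0}<\cdots<t_{N}=R$ with $N\le N_{0}$ and $t_{i+1}-t_{i}\le\epsilon_{0}/3$; crucially $N_{0}$ depends on neither $\gamma$ nor $r$.

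Next I would build $\gamma^{\epsilon}$ piecewise, just as before. On an interval with $\gamma([t_{i},t_{i+1}])\cap\tub_{r}(\Sigma)=\emptyset$ I keep $\gamma^{\epsilon}_{i}:=\gamma|_{[t_{i},t_{i+1}]}$, of type (b)(1). Otherwise let $a_{i}$ (resp.\ $b_{i}$) be the smallest (resp.\ largest) time of $[t_{i},t_{i+1}]$ with $\gamma(a_{i}),\gamma(b_{i})\in\overline{\tub_{r}(\Sigma)}$, let $\sigma_{i},\sigma_{i+1}\in\Sigma$ be their radial projections, and let $\delta_{i}$ (resp.\ $\delta_{i+1}$) be the radial geodesic from $\gamma(a_{i})$ to $\sigma_{i}$ (resp.\ from $\gamma(b_{i})$ to $\sigma_{i+1}$); these are orthogonal to $\Sigma$, hence horizontal, of length $\le r$, and serve as the type (b)(2) pieces. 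Since subsegments of a minimal horizontal geodesic are again minimal, $b_{i}-a_{i}=d(L_{\gamma(a_{i})},L_{\gamma(b_{i})})\le\epsilon_{0}/3$, so the triangle inequality gives $d(L_{\sigma_{i}},L_{\sigma_{i+1}})\le 2r+(b_{i}-a_{i})<\epsilon_{0}$ once $r<\epsilon_{0}/3$. By the choice of $\epsilon_{0}$ I may then take $\alpha_{i}$ to be a minimal horizontal geodesic of $(M,g)$ from $L_{\sigma_{i}}$ to $L_{\sigma_{i+1}}$: it lies in $\Sigma$ and $l(\alpha_{i})=d(L_{\sigma_{i}},L_{\sigma_{i+1}})$, so it is of type (b)(3) and $|l(\alpha_{i})-(b_{i}-a_{i})|\le 2r$. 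Using the equifocality of $\F$ to match the endpoints of $\alpha_{i}$ and of the radial segments with the adjacent sub-arcs of $\gamma$, I set $\gamma^{\epsilon}_{i}:=\gamma|_{[t_{i},a_{i}]}\cup\delta_{i}^{-1}\cup\alpha_{i}\cup\delta_{i+1}\cup\gamma|_{[b_{i},t_{i+1}]}$; the two sub-arcs of $\gamma$ avoid $\tub_{r}(\Sigma)$ and so are of type (b)(1).

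Concatenating, $\gamma^{\epsilon}:=\gamma^{\epsilon}_{1}\cup\cdots\cup\gamma^{\epsilon}_{N}$ is a piecewise smooth horizontal curve satisfying (b), and the estimates above give $|l(\gamma^{\epsilon}_{i})-l(\gamma|_{[t_{i},t_{i+1}]})|\le 4r$ on each piece, hence $|l(\gamma^{\epsilon})-l(\gamma)|\le 4Nr\le 4N_{0}r$; choosing $r$ small enough (in particular $r<\epsilon_{0}/3$ and $r<\epsilon/(4N_{0})$) yields (a). I expect the only delicate point to be the \emph{uniformity} of this bound --- that the number of pieces $N$ is controlled by a constant $N_{0}$ independent of $\gamma$ and of $r$ --- which is exactly what $R\le\operatorname{diam}(M/\F)$ provides; the remaining verifications are a routine transcription of the proof of Lemma~\ref{lemma-fundamental-thm-gromov-hausdorff-convergence-sigma}.
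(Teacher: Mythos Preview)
Your proposal is correct and follows exactly the approach the paper intends: the paper does not give a separate proof of this lemma, stating only that ``using some arguments of the above lemma we can also prove the next result,'' and your write-up is precisely that adaptation. Your observation that Claim~2 of Lemma~\ref{lemma-fundamental-thm-gromov-hausdorff-convergence-sigma} (and the auxiliary error $\epsilon_{1}$) becomes unnecessary in $(M,g)$---because $\alpha_{i}$ is already a minimal horizontal geodesic for $g$ lying in $\Sigma$---is the one simplification available here, and it is correctly identified.
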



We are now ready to prove Proposition \ref{prop-epsilon-isometricos}.
For a small $\epsilon$ consider $r$ defined in Lemmas \ref{lemma-fundamental-thm-gromov-hausdorff-convergence-sigma}
and \ref{lemma-suplementar-thm-gromov-hausdorff-convergence-sigma}.

Let $\hat{\gamma}$ be a minimal horizontal geodesic that joins $\hat{L}_{\hat{p}}$ to $\hat{L}_{\hat{q}}$ and $\hat{\gamma}^{\epsilon}$ the piecewise smooth horizontal curve defined in Lemma \ref{lemma-fundamental-thm-gromov-hausdorff-convergence-sigma}.
Note  that $l(\hat{\pi}_{r}(\hat{\gamma}^{\epsilon}))=\hat{l}_{r}(\hat{\gamma}^{\epsilon}).$ Hence 
 Lemma \ref{lemma-fundamental-thm-gromov-hausdorff-convergence-sigma}   implies
\begin{eqnarray*}
d(L_{p},L_{q})&\leq & l(\hat{\pi}_{r}(\hat{\gamma}^{\epsilon}))\\
              &= & \hat{l}_{r}(\hat{\gamma}^{\epsilon})\\
              &< & \hat{l}_{r}(\hat{\gamma})+\epsilon\\
              &= & \hat{d}_{r}(\hat{L}_{\hat{p}},\hat{L}_{\hat{q}})+\epsilon. 
\end{eqnarray*}
Now let $\gamma$ be a minimal horizontal geodesic that joins $L_{p}$ to $L_{q}$ and $\gamma^{\epsilon}$  the piecewise smooth horizontal curve defined in Lemma \ref{lemma-suplementar-thm-gromov-hausdorff-convergence-sigma}. By Theorem \ref{thm-Blowup-srf} there exists a curve 
$\hat{\gamma}$ in $\hat{M}_{r}(\Sigma)$ that joins $\hat{L}_{\hat{p}}$ to $\hat{L}_{\hat{q}}$ such that $\hat{l}_{r}(\hat{\gamma})= l(\gamma^{\epsilon})$. Hence Lemma \ref{lemma-suplementar-thm-gromov-hausdorff-convergence-sigma} implies
\begin{eqnarray*}
\hat{d}_{r}(\hat{L}_{\hat{p}},\hat{L}_{\hat{q}}) & \leq & \hat{l}_{r}(\hat{\gamma})\\
                                                 & =& l(\gamma^{\epsilon})\\
                                                 &<& l(\gamma)+\epsilon\\
                                                 & = & d(L_{p},L_{q})+\epsilon.
\end{eqnarray*}


\section{Appendix}

In this appendix  we recall that if $\rho:X\rightarrow Y$ is a surjective $\epsilon/2$-isometry between compact metric spaces, then $d_{G-H}(X,Y)<3\epsilon$. In particular, this implies  that Corollary \ref{cor-gromov-hausdorff-convergence} follows directly from Theorem \ref{cor-epsilon-isometria} (see also Burago, Burago and Ivanov \cite{Burago}).

We start by recalling the definition and some facts about  Gromov-Hausdorff distance (for details see  Petersen \cite{petersen}). 

\begin{dfn}
Let $X$ be a metric space and $A, B\subset X$. Then we define the \emph{Hausdorff distance} between $A$ and $B$ as  $d_{H}(A,B)=\inf\{ \epsilon: A\subset \tub_{\epsilon}(B), B\subset\tub_{\epsilon}(A)\}$. Now consider two metric spaces $X$ and $Y$ then an \emph{admissible metric} on the disjoint union $X\sqcup Y$ 
is a metric that extends the given metrics on $X$ and $Y$. With this we can define the \emph{Gromov-Hausdorff distance} as 
$d_{G-H}(X,Y)=\inf\{d_{H}(X,Y):$ admissible metrics on $X\sqcup Y\}$. 

\end{dfn}

It is possible to prove that if $X$ and $Y$ are compact metric spaces then $X$ and $Y$ are isometric if and only if $d_{G-H}(X,Y)=0$.
Since $d_{G-H}$ is symmetric and satisfies the triangle inequality, the collection of compact metric spaces $(\mathcal{M},d_{G-H})$ turns out to be a pseudometric space, and if we consider equivalence classes of isometric spaces it becomes a metric space. In fact, it is possible to prove that this metric space is complete and separable.

In what follows we will need a lemma about $\epsilon$-dense subsets. Recall that if $X$ is a compact metric space, then a finite subset $A\subset X$ is called \emph{$\epsilon$-dense subset} if every point of $X$ is within distance $\epsilon$ of some element in $A$, i.e., $d_{H}(A,X)<\epsilon.$

\begin{lemma}[Petersen \cite{petersen}]
\label{lemma-epsilon-denseSubset}

Suppose that we have $\epsilon$-dense subsets 
\[A=\{x_{1},\ldots, x_{k}\}\subset X\, \mathrm{and}\  B=\{y_{1},\ldots, y_{k}\}\subset Y\]
 with the further property that 
 \[|d(x_{i},x_{j})-d(y_{i},y_{j})|\leq \epsilon,  \  1\leq i,j\leq k.\]
 Then $d_{G-H}(X,Y)\leq 3\epsilon.$ 

\end{lemma}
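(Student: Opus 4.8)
The plan is to exhibit a single admissible metric on the disjoint union $X\sqcup Y$, built directly from the given matching $x_{i}\leftrightarrow y_{i}$, and to read the Hausdorff bound off from the two $\epsilon$-density hypotheses. Concretely, I would keep the given metrics on $X$ and on $Y$ unchanged, write $d$ for either of them, and for $x\in X$ and $y\in Y$ define
\[ \hat{d}(x,y):=\hat{d}(y,x):=\min_{1\le i\le k}\bigl(d(x,x_{i})+\epsilon+d(y_{i},y)\bigr). \]
Every such cross-distance is at least $\epsilon$, so $\hat{d}$ is positive off the diagonal of $X\sqcup Y$, and once the triangle inequality is verified it is an admissible metric in the sense recalled above. (If $\epsilon=0$ then $A=X$, $B=Y$ and the matching is distance-preserving, so $d_{G-H}(X,Y)=0$ and there is nothing to prove; we may thus assume $\epsilon>0$.)

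The triangle inequality splits into cases according to how many of the three points lie in $X$. Triples contained in $X$, or in $Y$, are handled by the original metrics. A mixed inequality in which the ``middle'' point lies in the same space as one of the endpoints --- for instance $\hat{d}(x,y)\le d(x,x')+\hat{d}(x',y)$ with $x,x'\in X$ and $y\in Y$, or $\hat{d}(x,y')\le\hat{d}(x,y)+d(y,y')$ with $x\in X$ and $y,y'\in Y$ --- follows immediately by applying the triangle inequality inside $X$ (resp.\ inside $Y$) termwise inside the minimum that defines $\hat{d}$; the versions with $X$ and $Y$ interchanged hold because the definition of $\hat{d}$ is symmetric in the two pairs $(X,A)$ and $(Y,B)$. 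The one genuinely substantial case has two endpoints in $X$ and the middle point in $Y$: for $x,x'\in X$ and $y\in Y$, choosing indices $i$ and $j$ realizing the minima in $\hat{d}(x,y)$ and $\hat{d}(y,x')$, one obtains
\[ \hat{d}(x,y)+\hat{d}(y,x')\ge d(x,x_{i})+d(x_{j},x')+\bigl(d(y_{i},y)+d(y,y_{j})\bigr)+2\epsilon, \]
and then $d(y_{i},y)+d(y,y_{j})\ge d(y_{i},y_{j})\ge d(x_{i},x_{j})-\epsilon$ by the distortion hypothesis, after which the triangle inequality in $X$ gives $\hat{d}(x,y)+\hat{d}(y,x')\ge d(x,x')+\epsilon\ge d(x,x')$. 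The symmetric case, with two endpoints in $Y$, is identical. I expect this to be the crux of the argument: it is the only place where the hypothesis $|d(x_{i},x_{j})-d(y_{i},y_{j})|\le\epsilon$ is used, and the constant in the definition of $\hat{d}$ has to be chosen so that the $2\epsilon$ from the two cross-terms absorbs the $-\epsilon$ coming from the distortion.

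It remains to estimate the Hausdorff distance between $X$ and $Y$ inside $(X\sqcup Y,\hat{d})$. Given $x\in X$, choose $i$ with $d(x,x_{i})\le\epsilon$, which is possible since $A$ is $\epsilon$-dense; then $\hat{d}(x,y_{i})\le d(x,x_{i})+\epsilon\le 2\epsilon$, so $X$ lies in the $2\epsilon$-neighbourhood of $Y$, and symmetrically $Y$ lies in the $2\epsilon$-neighbourhood of $X$. Hence $d_{H}(X,Y)\le 2\epsilon$ for this admissible metric, and therefore $d_{G-H}(X,Y)\le 2\epsilon\le 3\epsilon$, which proves the lemma.
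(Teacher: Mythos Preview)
Your argument is correct. In fact it yields the sharper bound $d_{G-H}(X,Y)\le 2\epsilon$, which of course implies the stated $3\epsilon$. The construction of an admissible metric on $X\sqcup Y$ via $\hat d(x,y)=\min_i\bigl(d(x,x_i)+\epsilon+d(y_i,y)\bigr)$ is the standard one, and your verification of the triangle inequality is complete; the only subtle case is indeed the one with the middle point on the opposite side, and you have handled it correctly.

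There is nothing to compare here: the paper does not give its own proof of this lemma but simply cites it from Petersen's textbook. Your proof is essentially the argument one finds there (or in Burago--Burago--Ivanov), so it is entirely appropriate as a self-contained justification.
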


\begin{prop}
Let $(X,d_{X})$ and $(Y,d_{Y})$ be compact metric spaces and $\rho:X\rightarrow Y$ a surjective $\epsilon/2$-isometry, i.e., 
\begin{equation}
\label{eq-provaTeoGromov-Hausdorff}
 |d_{Y}(\rho(x),\rho(\tilde{x}))-d_{X}(x,\tilde{x})|<\epsilon/2.
\end{equation}
Then $d_{G-H}(X,Y)<3\epsilon$, where $d_{G-H}$ is the distance of Gromov-Hausdorff.
\end{prop}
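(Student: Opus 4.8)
The plan is to deduce the bound from Lemma \ref{lemma-epsilon-denseSubset} by manufacturing matched $\epsilon$-dense subsets of $X$ and $Y$ out of the $\epsilon/2$-isometry $\rho$. First I would pick a maximal $\epsilon/2$-separated subset $A=\{x_1,\dots,x_k\}\subset X$; by maximality $A$ is $\epsilon/2$-dense in $X$, hence certainly $\epsilon$-dense. Then set $y_i:=\rho(x_i)$ and let $B=\{y_1,\dots,y_k\}$ (discarding repeats if $\rho$ is not injective — this only shrinks both lists in parallel and does no harm). The inequality $|d_X(x_i,x_j)-d_Y(y_i,y_j)|<\epsilon/2\le\epsilon$ is exactly the hypothesis \eqref{eq-provaTeoGromov-Hausdorff}, so the metric-comparison condition of the lemma holds immediately.

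The one genuine point to check is that $B$ is $\epsilon$-dense in $Y$. Given $y\in Y$, surjectivity of $\rho$ gives $x\in X$ with $\rho(x)=y$; choose $x_i\in A$ with $d_X(x,x_i)<\epsilon/2$. Then by \eqref{eq-provaTeoGromov-Hausdorff},
\[
d_Y(y,y_i)=d_Y(\rho(x),\rho(x_i))<d_X(x,x_i)+\epsilon/2<\epsilon,
\]
so $d_H(B,Y)<\epsilon$, i.e. $B$ is $\epsilon$-dense. This is the step that uses surjectivity in an essential way, and it is really the only place any care is needed; everything else is bookkeeping.

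With $A$ and $B$ in hand, Lemma \ref{lemma-epsilon-denseSubset} yields $d_{G-H}(X,Y)\le 3\epsilon$, and since all the estimates above were strict we in fact get $d_{G-H}(X,Y)<3\epsilon$, as claimed. (Strictly speaking, to get the strict inequality cleanly one may instead run the argument with $\epsilon':=\epsilon-\delta$ for small $\delta>0$, noting that $\rho$ is also an $\epsilon'/2$-isometry once $\delta$ is small enough relative to the finitely many slacks appearing, giving $d_{G-H}(X,Y)\le 3\epsilon'<3\epsilon$; this sidesteps the boundary case in the lemma.) I expect no real obstacle here: the proof is essentially a packaging of Petersen's lemma, and the main thing to get right is the use of surjectivity to transport $\epsilon$-density from $X$ to $Y$. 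To conclude the appendix's stated purpose, one then applies this proposition to $\rho=\hat\rho$ and the $\epsilon/2$-isometry supplied by Theorem \ref{cor-epsilon-isometria} (after rescaling $\epsilon$), which gives Corollary \ref{cor-gromov-hausdorff-convergence}.

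\begin{proof}
Choose a maximal $\epsilon/2$-separated subset $A=\{x_{1},\ldots,x_{k}\}$ of the compact space $X$; such a finite set exists by compactness, and by maximality every point of $X$ lies within $\epsilon/2$, hence within $\epsilon$, of some $x_{i}$, so $d_{H}(A,X)<\epsilon$. Put $y_{i}:=\rho(x_{i})$ and $B:=\{y_{1},\ldots,y_{k}\}\subset Y$ (if some $y_{i}$ coincide, delete the repetitions from both lists simultaneously; this does not affect any of the estimates below).

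We first check that $B$ is $\epsilon$-dense in $Y$. Let $y\in Y$. Since $\rho$ is surjective, $y=\rho(x)$ for some $x\in X$, and there is $x_{i}\in A$ with $d_{X}(x,x_{i})<\epsilon/2$. By \eqref{eq-provaTeoGromov-Hausdorff},
\[
d_{Y}(y,y_{i})=d_{Y}(\rho(x),\rho(x_{i}))<d_{X}(x,x_{i})+\frac{\epsilon}{2}<\epsilon,
\]
so $d_{H}(B,Y)<\epsilon$. Moreover, for all $1\le i,j\le k$, applying \eqref{eq-provaTeoGromov-Hausdorff} to the pair $x_{i},x_{j}$ gives
\[
|d_{X}(x_{i},x_{j})-d_{Y}(y_{i},y_{j})|<\frac{\epsilon}{2}<\epsilon.
\]

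Thus $A$ and $B$ are $\epsilon$-dense subsets of $X$ and $Y$ satisfying the hypotheses of Lemma \ref{lemma-epsilon-denseSubset}, whence $d_{G-H}(X,Y)\le 3\epsilon$. To obtain the strict inequality, fix $\delta>0$ small enough that $\rho$ is also an $\epsilon'/2$-isometry with $\epsilon':=\epsilon-\delta>0$ (possible because the finitely many slacks $\tfrac{\epsilon}{2}-|d_{Y}(\rho(x_{i}),\rho(x_{j}))-d_{X}(x_{i},x_{j})|$ and the density slacks are strictly positive and finite in number, after the choice of $A$); rerunning the argument with $\epsilon'$ in place of $\epsilon$ gives $d_{G-H}(X,Y)\le 3\epsilon'<3\epsilon$.
\end{proof}
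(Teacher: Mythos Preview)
Your argument is essentially the same as the paper's: choose an $\epsilon/2$-dense (hence $\epsilon$-dense) finite set in $X$, push it forward by $\rho$, use surjectivity together with \eqref{eq-provaTeoGromov-Hausdorff} to see the image is $\epsilon$-dense in $Y$, and invoke Lemma~\ref{lemma-epsilon-denseSubset}. The paper does exactly this in three lines; you spell out the density check for $B$ and the construction of $A$ more carefully, but the route is identical.

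One small remark: your device for upgrading $\le 3\epsilon$ to $<3\epsilon$ does not quite work as written. You claim $\rho$ is an $\epsilon'/2$-isometry for some $\epsilon'<\epsilon$ because ``the finitely many slacks \ldots\ are strictly positive,'' but the $\epsilon/2$-isometry condition \eqref{eq-provaTeoGromov-Hausdorff} is required for \emph{all} pairs $x,\tilde{x}\in X$, not just the finitely many $x_i$'s, and the density check for $B$ also uses \eqref{eq-provaTeoGromov-Hausdorff} on pairs $(x,x_i)$ with $x$ arbitrary. So there is no reason $\rho$ should be an $\epsilon'/2$-isometry. The paper simply writes $<3\epsilon$ without comment; if you want a clean strict bound, note that the metric-comparison hypothesis of the lemma is satisfied with slack $\epsilon/2$ rather than $\epsilon$, and trace through Petersen's proof of the lemma to get $d_{G-H}(X,Y)\le \tfrac{5}{2}\epsilon<3\epsilon$.
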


\begin{proof}
Consider $\{x_{1},\ldots,x_{k}\}$ an $\epsilon/2$-dense subset of $X$. Clearly $\{x_{1},\ldots,x_{k}\}$   is also an $\epsilon$-dense subset. Now set $y_{i}:=\rho(x_{i}).$ Then equation (\ref{eq-provaTeoGromov-Hausdorff}) implies that $\{ y_{1},\ldots, y_{k}\}$ is an $\epsilon$-dense subset of $Y$. Equation (\ref{eq-provaTeoGromov-Hausdorff}) and Lemma \ref{lemma-epsilon-denseSubset} then imply $d_{G-H}(X,Y)<3\epsilon.$
\end{proof}

\bibliographystyle{amsplain}

\begin{thebibliography}{10}


\bibitem{Alex2}M. M. Alexandrino, \textit{Singular riemannian 
foliations with sections},  Illinois J.\ Math.\ \textbf{48} (2004) 
No 4, 1163-1182. 



\bibitem{Alex4} M. M. Alexandrino, \textit{Proofs of conjectures about singular riemannian foliations}  Geom.\ Dedicata \textbf{119} (2006) No.~1, 219-234.

\bibitem{AlexToeben} M. M. Alexandrino, D. T\"{o}ben, \textit{Singular riemannian foliations on simply connected spaces},  Differential Geom.\ 
and Appl.\ \textbf{24} (2006) 383-397.






\bibitem{AlexToeben2} M. M. Alexandrino, D. T\"{o}ben, \emph{Equifocality of singular Riemannian foliation}, 
Proc. Amer. Math. Soc. \textbf{136} (2008), 3271-3280. 


\bibitem{Burago} D. Burago, Y. Burago and S. Ivanov, \emph{A Course in Metric Geometry}, Graduate Studies in Mathematics, \textbf{33}. American Mathematical Society.


\bibitem{Duistermaat} J.J. Duistermaat and J. A. C. Kolk, \emph{Lie Groups,} Springer-Verlag Berlin Heidelberg 2000.

\bibitem{FerusKarcherMunzner} D. Ferus, H. Karcher and H. F. M\"{u}nzner, \emph{Cliffordalgebren und neue isoparametrische Hyperfl\"{a}chen} Math. Z. \textbf{177} (1981), 479-502.



\bibitem{Kosinski} A. A. Kosinski, \emph{Differential Manifolds}, Dover 1993.

\bibitem{Lytchak2} A. Lytchak, \emph{Geometric resolution of singular riemannian foliations}. Preprint (2007). 

\bibitem{Molino} P. Molino, \textit{Riemannian foliations}, Progress in Mathematics vol. 73, Birkh\"{a}user Boston 1988.

\bibitem{MolinoBlowup} P. Molino, \textit{Desingularisation Des Feuilletages Riemanniens} American Journal of Mathematics, \textbf{106} No 5,(1984) 1091-1106.



\bibitem{petersen} P. Petersen, \textit{ Riemannian geometry}, Springer Verlag, Graduate Texts in Mathematics, (2000) Second Edition. 

\bibitem{TTh1} C.-L. Terng and G. Thorbergsson, \textit{Submanifold geometry in symmetric spaces}, J. Differential Geometry \textbf{42} (1995), 665--718.




\bibitem{Toeben} D. T\"oben, \textit{Parallel focal structure and 
singular Riemannian foliations},  Trans.\ Amer.\ Math.\ Soc.\ \textbf{358} (2006), 1677-1704.





\bibitem{Wang} Q-M. Wang, \emph{ Isoparametric functions on Riemannian manifolds. I.} Math. Ann. \textbf{277} (1987), 639-646


\end{thebibliography}

\end{document}